\numberwithin{figure}{section}
\numberwithin{equation}{section}
\theoremstyle{plain}
\newtheorem{thm}{\protect\theoremname}[section]
  \theoremstyle{definition}
  \newtheorem{defn}[thm]{\protect\definitionname}
  \theoremstyle{plain}
  \newtheorem{prop}[thm]{\protect\propositionname}
  \theoremstyle{remark}
  \newtheorem{rem}[thm]{\protect\remarkname}
  \theoremstyle{plain}
  \newtheorem{lem}[thm]{\protect\lemmaname}
  \theoremstyle{plain}
  \newtheorem{cor}[thm]{\protect\corollaryname}
  \theoremstyle{definition}
  \newtheorem{example}[thm]{\protect\examplename}
\tikzset{commutative diagrams/.cd, mysymbol/.style={start anchor=center,end anchor=center,draw=none}}
\newcommand{\commutes}[2][\circ]{\arrow[mysymbol]{#2}[description]{#1}}
\newcommand{\PB}[2][\square]{\arrow[mysymbol,pos=0.27]{#2}[description]{#1}}
\newcommand{\PO}[2][\square]{\arrow[mysymbol,pos=0.27]{#2}[description]{#1}}
\newcommand{\dotminus}{\mathbin{\text{\@dotminus}}}
\newcommand{\@dotminus}{%
  \ooalign{\hidewidth\raise1ex\hbox{.}\hidewidth\cr$\m@th-$\cr}}
\newcommand*{\rom}[1]{\expandafter\@slowromancap\romannumeral #1@}
\newcommand{\CA}{\mathcal{A}}
\newcommand{\CC}{\mathcal{C}}
\newcommand{\CH}{\mathcal{H}}
\newcommand{\CI}{\mathcal{I}}
\newcommand{\CP}{\mathcal{P}}
\newcommand{\CR}{\mathcal{R}}
\newcommand{\CS}{\mathcal{S}}
\newcommand{\CT}{\mathcal{T}}
\newcommand{\CU}{\mathcal{U}}
\newcommand{\CV}{\mathcal{V}}
\newcommand{\CW}{\mathcal{W}}
\newcommand{\CX}{\mathcal{X}}
\newcommand{\CY}{\mathcal{Y}}
\newcommand{\BN}{\mathbb{N}}
\newcommand{\deff}{\coloneqq}
\newcommand{\Hom}{\operatorname{Hom}\nolimits}
\newcommand{\obj}{\operatorname{obj}\nolimits}
\renewcommand{\leq}{\leqslant}
\renewcommand{\geq}{\geqslant}
\renewcommand{\phi}{\varphi}
\newcommand{\iso}{\cong}
\newsavebox{\wideeqbox}
\let\amph=&
\newcommand{\sus}{\Sigma}
\newcommand{\add}{\operatorname{\textsf{add}}\nolimits}
\newcommand{\End}{\operatorname{End}\nolimits}
\newcommand{\Ext}{\operatorname{Ext}\nolimits}
\renewcommand{\mod}{\operatorname{\textsf{mod}}\nolimits}
\newcommand{\XR}{\operatorname{\mathcal{X}_\emph{R}}\nolimits}
\newcommand{\XT}{\operatorname{\mathcal{X}_\emph{T}}\nolimits}
\newcommand{\op}{\text{op}}
\newcommand{\Ker}{\operatorname{Ker}\nolimits}
\newcommand{\cok}{\operatorname{coker}\nolimits}
\newcommand{\Image}{\operatorname{Im}\nolimits}
\newcommand{\Coim}{\operatorname{Coim}\nolimits}
\newcommand{\image}{\operatorname{im}\nolimits}
\newcommand{\coim}{\operatorname{coim}\nolimits}
\newcommand{\ol}[1]{\overline{#1}}
\newcommand{\LF}{\mathrm{LF}}
\newenvironment{acknowledgements}{\begin{abstract}} {\end{abstract}}
  \providecommand{\corollaryname}{Corollary}
  \providecommand{\definitionname}{Definition}
  \providecommand{\examplename}{Example}
  \providecommand{\lemmaname}{Lemma}
  \providecommand{\propositionname}{Proposition}
  \providecommand{\remarkname}{Remark}
\providecommand{\theoremname}{Theorem}
\begin{document}

\title{Quasi-abelian hearts of twin cotorsion pairs on triangulated categories\footnote{\url{https://doi.org/10.1016/j.jalgebra.2019.06.011}}}

\author{Amit Shah}
\address{School of Mathematics \\ 
University of Leeds \\ 
Leeds, LS2 9JT \\ 
United Kingdom
}
\email{mmas@leeds.ac.uk}
\date{\today}
\keywords{triangulated category, twin cotorsion pair, heart, quasi-abelian category, localisation, cluster category}
\subjclass[2010]{Primary 18E30; Secondary 18E05, 18E35, 18E40}

\begin{abstract}
We prove that, under a mild assumption, the heart $\overline{\CH}$
of a twin cotorsion pair $((\CS,\CT),(\CU,\CV))$ on a triangulated
category $\CC$ is a quasi-abelian category. If $\CC$ is also Krull-Schmidt
and $\CT=\CU$, we show that the heart of the cotorsion pair $(\CS,\CT)$
is equivalent to the Gabriel-Zisman localisation of $\overline{\CH}$
at the class of its regular morphisms.

In particular, suppose $\CC$ is a cluster category with a rigid object
$R$ and $[\CX_{R}]$ the ideal of morphisms factoring through $\CX_{R}=\Ker(\Hom_{\CC}(R,-))$,
then applications of our results show that $\CC/[\CX_{R}]$ is a quasi-abelian
category. We also obtain a new proof of an equivalence between the
localisation of this category at its class of regular morphisms and
a certain subfactor category of $\CC$.
\end{abstract}
\maketitle

\section{Introduction\label{sec:Introduction}}

Cotorsion pairs were first defined specifically for the category of
abelian groups in \cite{Salce-cotorsion-theories-for-abelian-groups}
as an analogue of the torsion theories introduced in \cite{Dickson-torsion-theory-for-abelian-cats},
which were themselves used to generalise the notion of torsion in
abelian groups. Torsion theories for triangulated categories were
introduced in \cite{IyamaYoshino-mutation-in-tri-cats-rigid-CM-mods}
and used in the study of rigid Cohen-Macaulay modules over specific
Veronese subrings. Analogously, Nakaoka \cite{Nakaoka-cotorsion-pairs-I}
defined cotorsion pairs for triangulated categories as follows. Let
$\CC$ be a triangulated category with suspension functor $\sus$.
A \emph{cotorsion pair }on $\CC$ is a pair $(\CU,\CV)$ of full,
additive subcategories of $\CC$ that are closed under isomorphisms
and direct summands, satisfying $\Ext_{\CC}^{1}(\CU,\CV)=0$ and $\CC=\CU*\sus\CV$
(see Definitions \ref{def:CU*CV for subcats CU,CV of C} and \ref{def:cotorsion pair in tri'd cat}). This allowed
Nakaoka to extract an abelian category, known as the \emph{heart }of
the cotorsion pair \cite[Def. 3.7]{Nakaoka-cotorsion-pairs-I},
from the triangulated category. The key motivating examples for Nakaoka
were the following. \begin{enumerate}[(i)]
\item A $t$-structure $(\CC^{\leq 0},\CC^{\geq 0})$ on a triangulated category $\CC$, in the sense of \cite{BeilinsonBernsteinDeligne-perverse-sheaves}, can be interpreted as a cotorsion pair $(\sus\CC^{\leq 0},\sus^{-1}\CC^{\geq 0})$. In this case the heart  $\CC^{\leq0}\cap\CC^{\geq0}$ of the $t$-structure coincides with the heart of the cotorsion pair.

\item Suppose $\CC$ is a triangulated category, with a tilting subcategory $\CT$ (see \cite[Def. 2.2]{Iyama-higher-dimnl-AR-theory-maximal-orthog-subcats}). It was shown in \cite{KoenigZhu-from-tri-cats-to-abelian-cats} (see also \cite{KellerReiten-ct-algebras-are-gorenstein-stably-cy} and \cite{BuanMarshReiten-cluster-tilted-algebras}) that $\CC/[\CT]$ is an abelian category, where $[\CT]$ is the ideal of morphisms factoring through $\CT$. The corresponding cotorsion pair in this setting is $(\CT,\CT)$ and has $\CC/[\CT]$ as its heart.
\end{enumerate}

In \cite{BuanMarsh-BM2}, Buan and Marsh generalised the
results of \cite{KoenigZhu-from-tri-cats-to-abelian-cats} and
\cite{BuanMarshReiten-cluster-tilted-algebras} in the following
way. Assume $k$ is a field, and suppose $\CC$ is a skeletally small,
$\Hom$-finite, Krull-Schmidt, triangulated $k$-category has Serre duality (see Definition \ref{def:Serre functor}). For a subcategory
$\CA$ of $\CC$ that is closed under finite direct sums, let $[\CA]$ denote the ideal of $\CC$ consisting
of morphisms that factor through an object of $\CA$. Fix an object
$R$ of $\CC$ that is rigid (see \S\ref{sec:application to cluster category}).
It was shown \cite[Thm. 5.7]{BuanMarsh-BM2} that there
is an equivalence $(\CC/[\mathcal{X}_{R}])_{\CR}\simeq\mod(\End_{\CC}R)^{\text{op}}$,
where $\mathcal{X}_{R}=\Ker(\Hom_{\CC}(R,-))$ and $(\CC/[\mathcal{X}_{R}])_{\CR}$
is the (Gabriel-Zisman) localisation (see \cite{GabrielZisman-calc-of-fractions})
of $\CC/[\mathcal{X}_{R}]$ at the class $\CR$ of regular (see Remark
\ref{rem:semi-abelian iff parallels are regular}) morphisms. Beligiannis
further developed these ideas in \cite{Beligiannis-rigid-objects-triangulated-subfactors-and-abelian-localizations}.

Nakaoka was then able to put this into a more general context by introducing
the following concept in \cite{Nakaoka-twin-cotorsion-pairs}.
A \emph{twin cotorsion pair} on $\CC$ consists of two cotorsion pairs $(\CS,\CT)$
and $(\CU,\CV)$ on $\CC$ which satisfy $\CS\subseteq\CU$.
As for cotorsion pairs, Nakaoka defined the \emph{heart }of a twin
cotorsion pair as a certain subfactor category of $\CC$ (see Definition
\ref{def:quotients of C+ C- and H}). By setting $\CS=\CU$ and $\CT=\CV$,
one recovers the original cotorsion pair theory: the heart of the
twin cotorsion pair $((\CU,\CV),(\CU,\CV))$ coincides with the heart
of the cotorsion pair $(\CU,\CV)$ (see \cite[Exam. 2.10]{Nakaoka-twin-cotorsion-pairs}).

For a twin cotorsion pair, the associated heart $\overline{\CH}$
is shown \cite[Thm. 5.4]{Nakaoka-twin-cotorsion-pairs}
to be semi-abelian (see Definition \ref{def:left and right semi-abelian}).
Furthermore, Nakaoka showed \cite[Thm. 6.3]{Nakaoka-twin-cotorsion-pairs}
that if $\CU\subseteq\CS*\CT$ or $\CT\subseteq\CU*\CV$, then $\overline{\CH}$
is integral (see Definition \ref{def:integral category}), so that
localising at the class of regular morphisms produces an abelian category
(see \cite{Rump-almost-abelian-cats}). With $\CC$
and $R$ as above, and setting $((\CS,\CT),(\CU,\CV))=((\add\sus R,\CX_{R}),(\XR,\tensor[]{\XR}{^{\perp_1}}))$,
where $\tensor[]{\XR}{^{\perp_1}}=\Ker(\Ext^{1}_{\CC}(\CX_R,-))$,
one obtains the aforementioned result \cite[Thm. 5.7]{BuanMarsh-BM2}
of Buan and Marsh (see Lemma \ref{lem:Nakaoka twin cotorsion pairs Example 2.10 (2) twin cot pair giving cluster cat mod X_R}).

The main result of this article concerns quasi-abelian categories;
a \emph{quasi-abelian }category is an additive category which has
kernels and cokernels, and in which kernels are stable under pushout
and cokernels are stable under pullback (see Definition \ref{def:stable under PB (PO)}).
Important examples of such categories include: the category of topological
abelian groups; the category of $\Lambda$-lattices for $\Lambda$
an order over a noetherian integral domain; any abelian category;
and the torsion class and torsion-free class in any torsion theory
of an abelian category (see \cite[\S2]{Rump-almost-abelian-cats}
for more details). In this article, we prove that the heart of a twin
cotorsion pair, satisfying a different mild assumption, is quasi-abelian
(see Theorem \ref{thm:CC tri'd cat, CH =00003D CC^- or CC^+ then stable CH is quasi-abelian}).
This assumption is satisfied if $\CU\subseteq\CT$ or $\CT\subseteq\CU$,
and hence is met in the setting of \cite{BuanMarsh-BM2}
discussed above (see Corollary \ref{cor:CC tri'd, twin cotorsion pair, if CU in CT or CT in CU then stable heart is both integral and quasi-abelian})
where $\CT=\CU$.

Let $((\CS,\CT),(\CU,\CV))$ be a twin cotorsion pair with heart $\overline{\CH}$
on a Krull-Schmidt, triangulated category. We show in \S\ref{sec:Localisation of integral heart}
that if $\CT$ coincides with $\CU$, then the heart $\overline{\mathscr{H}}_{(\CS,\CT)}$
of $(\CS,\CT)$ (see \cite[Def. 3.7]{Nakaoka-cotorsion-pairs-I})
is equivalent to the localisation $\overline{\CH}_{\CR}$ of $\overline{\CH}$
at the class $\CR$ of its regular morphisms (see Theorem \ref{thm:CT=00003DCU implies G-Z localisation of CH at regulars is equivalent to heart of cotorsion pair (CS,CT)}).
Since $\CT=\CU$ when $((\CS,\CT),(\CU,\CV))=((\add\sus R,\CX_{R}),(\XR,\tensor[]{\XR}{^{\perp_1}}))$,
the results of \S\ref{sec:Localisation of integral heart} also apply
in the setting of Buan and Marsh as we explain in \S\ref{sec:application to cluster category}.
Our methods are also related to work of Marsh and Palu: in \cite{MarshPalu-nearly-morita-equivalences-and-rigid-objects},
equivalences are found from subfactor categories of a Krull-Schmidt,
$\Hom$-finite, triangulated category to localisations of module (and
hence abelian) categories, whereas we localise not necessarily abelian
categories. We also note that Theorem \ref{thm:CT=00003DCU implies G-Z localisation of CH at regulars is equivalent to heart of cotorsion pair (CS,CT)}
may be obtained from results of \cite{Beligiannis-rigid-objects-triangulated-subfactors-and-abelian-localizations}
in a different way (see Remark \ref{rem:comparison to Bel13}).

In particular, the cluster category $\CC_{H}$ (see \cite{BMRRT-cluster-combinatorics},
\cite{CalderoChapotonSchiffler-quivers-arising-from-cluster-a_n-case}) associated to a hereditary
$k$-algebra $H$ is an example of a $\Hom$-finite, Krull-Schmidt,
triangulated $k$-category that has Serre duality, and this is
the motivation for our results (see Example \ref{exa:C of A_4 example}).
It is especially interesting that $\CC/[\CX_{R}]$ is quasi-abelian
in this case, as many aspects of Auslander-Reiten theory for abelian
categories (developed in \cite{AuslanderReiten-Rep-theory-of-Artin-algebras-IV},
\cite{AuslanderReiten-Rep-theory-of-Artin-algebras-V}) still
apply for quasi-abelian categories (see the forthcoming preprint \cite{Shah-AR-theory-quasi-abelian-cats-KS-cats}).

This paper is organised in the following way. We first recall the
notion of a quasi-abelian category in \S\ref{sub:Preabelian-Categories},
then the definition and some properties of twin cotorsion pairs as
well as some new observations in \S\ref{sub:Twin-Cotorsion-Pairs}.
In \S\ref{sec:Main-Result} we prove our main result: the case when
the heart of a twin cotorsion pair becomes quasi-abelian. In \S\ref{sec:Localisation of integral heart}
we relate the heart of a twin cotorsion pair $((\CS,\CT),(\CU,\CV))$
to the heart of the cotorsion pair $(\CS,\CT)$ whenever $\CT=\CU$.
Lastly, we explore our main motivating example in \S\ref{sec:application to cluster category},
namely the setting of \cite{BuanMarsh-BM2}.

\section{\label{sec:Preliminaries}Preliminaries}

\subsection{\label{sub:Preabelian-Categories}Preabelian categories}

The main result of this paper concerns a type of category more general
than an abelian category---namely a \emph{quasi-abelian }category.
However, before giving the definition of such a category, we recall
some preliminary definitions. We only give a quick summary of the
theory and for more details we refer the reader to \cite{Rump-almost-abelian-cats}.
\begin{defn}
\label{def:preabelian cat}\cite[p. 24]{Popescu-abelian-cats-with-apps-to-rings-and-modules},
\cite[\S5.4]{BucurDeleanu-intro-to-theory-of-cats-and-functors}
A \emph{preabelian }category is an additive category in which every
morphism has a kernel and a cokernel.
\end{defn}

\begin{defn}
\label{def:image and coimage}\cite[p. 23]{Popescu-abelian-cats-with-apps-to-rings-and-modules}
Given a morphism $f\colon A\to B$ in a category $\CA$, the \emph{coimage
}$\coim f\colon A\to\Coim f$, if it exists, is the
cokernel $\cok(\ker f)$ of the kernel of $f$. Similarly, the \emph{image
}$\image f\colon\Image f\to B$ is the kernel $\ker(\cok f)$ of the
cokernel of $f$.
\end{defn}
The following proposition is then easily checked.
\begin{prop}
\label{prop:decomposition of map in preab cat}\emph{\cite[p. 24]{Popescu-abelian-cats-with-apps-to-rings-and-modules}}
Let $\CA$ be a preabelian category and $f\colon A\to B$ a morphism
in $\CA$. Then $f$ decomposes as$$\begin{tikzcd}[column sep=1.5cm] A \arrow[two heads]{d}[swap]{\coim f} \arrow{r}{f}\commutes{dr} & B \\ \Coim f  \arrow{r}[swap]{\widetilde{f}}& \Image f \arrow[hook]{u}[swap]{\image f}\end{tikzcd}$$\end{prop}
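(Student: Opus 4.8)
The plan is to construct the diagram by two successive applications of the universal properties of cokernels and kernels, both of which are available since $\CA$ is preabelian. Write $k \deff \ker f \colon \Ker f \to A$ for the kernel of $f$ and $c \deff \cok f \colon B \to \Cok f$ for its cokernel, so that, by Definition \ref{def:image and coimage}, $\coim f = \cok k \colon A \to \Coim f$ and $\image f = \ker c \colon \Image f \to B$. Note immediately that $\coim f$, being a cokernel, is an epimorphism, and that $\image f$, being a kernel, is a monomorphism; this justifies the epi and mono decorations on the two vertical arrows.

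First I would factor $f$ through $\Coim f$. Since $k = \ker f$, we have $f \circ k = 0$, so the universal property of the cokernel $\coim f = \cok k$ supplies a (unique) morphism $g \colon \Coim f \to B$ with $f = g \circ \coim f$.

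Next I would factor $g$ through $\Image f$. Composing with the epimorphism $\coim f$ on the right gives $(c \circ g) \circ \coim f = c \circ (g \circ \coim f) = c \circ f = 0$, and cancelling the epimorphism yields $c \circ g = 0$. The universal property of the kernel $\image f = \ker c$ then produces a (unique) morphism $\widetilde{f} \colon \Coim f \to \Image f$ with $g = \image f \circ \widetilde{f}$. Putting the two factorisations together,
$$f = g \circ \coim f = \image f \circ \widetilde{f} \circ \coim f,$$
which is precisely the commuting square asserted in the statement.

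I do not anticipate any genuine difficulty here; the only point requiring a moment's attention is the cancellation of $\coim f$ in the second step, which is legitimate exactly because cokernels are epimorphisms. If one also wants the intermediate maps to be canonical, uniqueness of $g$ follows from $\coim f$ being epic and uniqueness of $\widetilde{f}$ from $\image f$ being monic.
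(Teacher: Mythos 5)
Your proof is correct and is the standard argument; the paper itself omits the verification, remarking only that the proposition ``is then easily checked,'' and the two applications of the universal properties of $\cok(\ker f)$ and $\ker(\cok f)$ that you give (together with cancelling the epimorphism $\coim f$) are exactly what that phrase refers to.
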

\begin{defn}
\label{def:parallel of morphism f and strict morphism}\cite[p. 24]{Popescu-abelian-cats-with-apps-to-rings-and-modules}
The morphism $\widetilde{f}$ in Proposition \ref{prop:decomposition of map in preab cat}
above is called the \emph{parallel} \emph{of} $f$. Furthermore, if
$\widetilde{f}$ is an isomorphism then $f$ is said to be \emph{strict}.
\end{defn}

\begin{defn}
\label{def:left and right semi-abelian}\cite[p. 167]{Rump-almost-abelian-cats}
Let $\CA$ be a preabelian category. We call $\CA$ \emph{left semi-abelian
}if each morphism $f\colon A\to B$ factorises as $f=ip$ for some
monomorphism $i$ and cokernel $p$. We call $\CA$ \emph{right semi-abelian
}if instead each morphism $f$ decomposes as $f=ip$ with $i$ a kernel
and $p$ some epimorphism. If $\CA$ is both left and right semi-abelian,
then it is simply called \emph{semi-abelian}.\end{defn}
\begin{rem}
\label{rem:semi-abelian iff parallels are regular}A category is semi-abelian
if and only if, for every morphism $f$, the\emph{ }parallel $\widetilde{f}$
of $f$ is \emph{regular}, i.e. simultaneously monic and epic (see
\cite[pp. 167--168]{Rump-almost-abelian-cats}).
\end{rem}

\begin{defn}
\label{def:stable under PB (PO)}Let $\CA$ be a category, and suppose$$\begin{tikzcd}A\arrow{r}{a}\arrow{d}[swap]{b} & B \arrow{d}{c}\\ C \arrow{r}[swap]{d} & D\end{tikzcd}$$is
a commutative diagram in $\CA$. Let $\CP$ be a class of morphisms in $\CA$ (e.g. the class of all kernels in $\CA$). We say that $\CP$ is
\emph{stable under pullback} (respectively, \emph{stable under pushout})
if, in any diagram above that is a pullback (respectively, pushout) square,
$d$ is in $\CP$ implies that $a$ is in $\CP$ (respectively, $a$ is in 
$\CP$ implies that $d$ is in $\CP$).
\end{defn}

\begin{defn}
\label{def:quasi-abelian category}\cite[p. 168]{Rump-almost-abelian-cats}
Let $\CA$ be a preabelian category. We call $\CA$ \emph{left quasi-abelian} 
if cokernels are stable under pullback in $\CA$. If kernels are stable
under pushout in $\CA$, then we call $\CA$ \emph{right quasi-abelian. }Furthermore,
if $\CA$ is left and right quasi-abelian, then $\CA$ is simply called
\emph{quasi-abelian}.\end{defn}
\begin{rem}
The history of the term `quasi-abelian' category is not straightforward.
We use the terminology as in \cite{Rump-counterexample-to-Raikov},
but note that such categories were called `almost abelian' in \cite{Rump-almost-abelian-cats}.
We refer the reader to the `Historical remark' in \cite{Rump-counterexample-to-Raikov}
for more details.
\end{rem}

\begin{rem}
It is also worth remarking that a category is abelian if and only
if it is a quasi-abelian category in which every morphism is strict.\end{rem}
\begin{defn}
\label{def:integral category}\cite[p. 168]{Rump-almost-abelian-cats}
Let $\CA$ be a preabelian category. We call $\CA$ \emph{left integral} if epimorphisms
are stable under pullback in $\CA$. If monomorphisms
are stable under pushout in $\CA$, then we call $\CA$ \emph{right integral}. If $\CA$ is both left and right integral, then $\CA$ is called \emph{integral}.
\end{defn}
Lastly in this section, we recall an observation from \cite{Rump-almost-abelian-cats}.
\begin{prop}
\emph{\cite[p. 169, Cor. 1]{Rump-almost-abelian-cats}}
Every left (respectively, right) quasi-abelian or left (respectively,
right) integral category is left (respectively, right) semi-abelian.
\end{prop}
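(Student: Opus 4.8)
The plan is to establish the two ``left'' assertions directly, by producing for an arbitrary morphism a factorisation (monomorphism)$\circ$(cokernel), and then to deduce the two ``right'' assertions by applying the ``left'' ones in $\CA^{\op}$: the opposite of a preabelian category is again preabelian, with kernels and cokernels, pullbacks and pushouts, and monomorphisms and epimorphisms all interchanged, and under this dictionary ``left quasi-abelian''/``left integral''/``left semi-abelian'' turn into ``right quasi-abelian''/``right integral''/``right semi-abelian''. So assume $\CA$ is preabelian and either left quasi-abelian or left integral, and fix a morphism $f\colon A\to B$.

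First I would set $p\deff\cok(\ker f)\colon A\to\Coim f$, which exists and is a cokernel (in particular an epimorphism) since $\CA$ is preabelian, and let $h\colon\Coim f\to B$ be the unique morphism with $f=hp$; this is exactly the factorisation of $f$ through its coimage from Proposition~\ref{prop:decomposition of map in preab cat}, so that $h=\image f\circ\widetilde{f}$, but its internal form will be irrelevant. Since $f=hp$ with $p$ a cokernel, it suffices to prove that $h$ is a monomorphism, and because $\CA$ is additive with kernels this amounts to showing $\ker h=0$.

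The one real idea is to form a pullback. Put $k=\ker h\colon L\to\Coim f$; pullbacks exist in any preabelian category (as kernels of suitable maps out of biproducts), so form the pullback of $p$ along $k$, giving morphisms $a\colon P\to L$ and $b\colon P\to A$ with $pb=ka$. Here the hypothesis enters: if $\CA$ is left integral then $a$, being a pullback of the epimorphism $p$, is an epimorphism; if $\CA$ is left quasi-abelian then $a$, being a pullback of the cokernel $p$, is a cokernel, hence again an epimorphism. Either way $a$ is epic.

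Finally I would run the short diagram chase: $fb=hpb=hka=0$ because $hk=h\circ\ker h=0$, so $b$ factors through $\ker f$, and therefore $pb=0$ because $p=\cok(\ker f)$; hence $ka=pb=0$, and since $k=\ker h$ is monic this forces $a=0$. An epimorphism that equals a zero morphism has zero codomain, so $L=\ker h=0$ and $h$ is a monomorphism, whence $f=hp$ exhibits $\CA$ as left semi-abelian. I do not expect a genuine obstacle: the argument is entirely formal once one thinks to pull back the canonical cokernel $p$ along $\ker h$, and the quasi-abelian and integral cases differ only in the trivial remark that a cokernel is in particular an epimorphism; the final ``right'' statements then require no extra work beyond the duality above.
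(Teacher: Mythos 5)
Your argument is correct. Note that the paper itself gives no proof of this proposition --- it is quoted from Rump's paper (loc.\ cit.) --- so there is nothing internal to compare against; but your blind proof is complete and sound: factoring $f=hp$ with $p=\cok(\ker f)$, pulling $p$ back along $k=\ker h$, using left quasi-abelianness (cokernels, hence epimorphisms, are stable under pullback) or left integrality (epimorphisms are stable under pullback) to make the pulled-back map $a$ epic, and then the chase $fb=hka=0\Rightarrow pb=0\Rightarrow ka=0\Rightarrow a=0\Rightarrow L\iso 0$, which forces $\ker h=0$ and hence $h$ monic, so $f=hp$ is the required (mono)$\circ$(cokernel) factorisation; the right-handed statements follow by the duality you describe, since the opposite of a preabelian category is preabelian and the left/right notions are exchanged. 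This is essentially the standard argument behind Rump's Corollary~1, so no gap remains.
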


\subsection{\label{sub:Twin-Cotorsion-Pairs}Twin cotorsion pairs on triangulated
categories}

Throughout this section, let $\CC$ denote a fixed triangulated category
with suspension functor $\sus$. We use the labelling of the axioms
of a triangulated category as in \cite{HolmJorgensen-tri-cats-intro},
and its distinguished triangles will just be called triangles. We
follow \cite{Nakaoka-cotorsion-pairs-I}
and \cite{Nakaoka-twin-cotorsion-pairs}
in order to recall some of the definitions and theory concerning twin
cotorsion pairs on triangulated categories, but first we need some
notation.
\begin{defn}
\label{def:i-th perp definitions (Hom or Ext^i perps)}Let $\CU\subseteq\CC$
be a full, additive subcategory of $\CC$ that is closed under isomorphisms
and direct summands. By $\Ext_{\CC}^{i}(\CU,X)=0$ (respectively,
$\Ext_{\CC}^{i}(X,\CU)=0$) we mean $\Ext_{\CC}^{i}(U,X)=0$ (respectively,
$\Ext_{\CC}^{i}(X,U)=0$) for all $U\in\CU$. We define the following
full, additive subcategories of $\CC$ where $i\in\BN$:
\[
\CU^{\perp_{i}}\deff\left\{ X\in\CC\mid\Ext_{\CC}^{i}(\CU,X)=0\right\} ,
\]
\[
^{\perp_{i}}\CU\deff\left\{ X\in\CC\mid\Ext_{\CC}^{i}(X,\CU)=0\right\} .
\]

\end{defn}

\begin{defn}
\label{def:CU*CV for subcats CU,CV of C}\cite[p. 122]{IyamaYoshino-mutation-in-tri-cats-rigid-CM-mods}
Let $\CU,\CV\subseteq\CC$ be full, additive subcategories of $\CC$
that are closed under isomorphisms and direct summands. By $\CU*\CV$
we denote the full subcategory of $\CC$ consisting of objects $X\in\CC$
for which there exists a triangle $U\to X\to V\to\sus U$ in $\CC$
with $U\in\CU$, $V\in\CV$.
\end{defn}

\begin{defn}
\label{def:cotorsion pair in tri'd cat}\cite[Def. 2.1]{Nakaoka-cotorsion-pairs-I}
Let $\CU,\CV\subseteq\CC$ be full, additive subcategories of $\CC$
that are closed under isomorphisms and direct summands. We call (the
ordered pair) $(\CU,\CV)$ a \emph{cotorsion pair (on $\CC$) }if
$\Ext_{\CC}^{1}(\CU,\CV)=0$ and $\CC=\CU*\sus\CV$.
\end{defn}
As pointed out in \cite[Rem. 2.2]{Nakaoka-cotorsion-pairs-I},
a pair $(\CU,\CV)$ is a cotorsion pair on a Krull-Schmidt, $\Hom$-finite, triangulated $k$-category $\CC'$ (with suspension $\sus '$) if and only if $(\sus'^{-1}\CU,\CV)$
is a torsion theory in $\CC'$ as defined in \cite{IyamaYoshino-mutation-in-tri-cats-rigid-CM-mods}. Recall that a \emph{torsion theory in $\CC'$} (in the sense of \cite[Def. 2.2]{IyamaYoshino-mutation-in-tri-cats-rigid-CM-mods}) is a pair $(\CX,\CY)$ of full additive subcategories $\CX,\CY$ of $\CC'$ that are closed under isomorphisms and direct summands, such that $\Hom_{\CC'}(\CX,\CY)=0$ and $\CC'=\CX*\CY$.
We note that in \cite{IyamaYoshino-mutation-in-tri-cats-rigid-CM-mods} all categories are assumed to be Krull-Schmidt and all triangulated categories are also assumed to be $\Hom$-finite $k$-categories (see \cite[pp. 121--122]{IyamaYoshino-mutation-in-tri-cats-rigid-CM-mods}). Therefore, some of the results from \cite{IyamaYoshino-mutation-in-tri-cats-rigid-CM-mods} may not translate directly over to the more general setting considered in \cite{Nakaoka-twin-cotorsion-pairs}.
\begin{defn}
\label{def:right left minimal morphism}\cite[\S2]{AuslanderReiten-Rep-theory-of-Artin-algebras-IV}
Let $f\colon X\to Y$ be a morphism in $\CC$. We say that $f$ is
\emph{right minimal }(respectively, \emph{left minimal}) if, for any
endomorphism $g\colon X\to X$ (respectively, $g\colon Y\to Y$),
$fg=f$ (respectively, $gf=f$) implies $g$ is an automorphism.
\end{defn}

\begin{defn}
\cite[p. 114]{AuslanderReiten-apps-of-contravariantly-finite-subcats}
Let $\CX\subseteq\CC$ be a full subcategory, closed under isomorphisms
and direct summands. \begin{enumerate}[(i)]
\item A \emph{right} $\CX$\emph{-approximation} of $A$ in $\CC$ is a morphism $X\to A$ in $\CC$ with $X\in\CX$, such that for any object $X'\in \CX$ we have an exact sequence $$\Hom_{\CA}(X',X)\to\Hom_{\CA}(X',A)\to 0.$$A right $\CX$-approximation is called a \emph{minimal} right $\CX$-approximation if it is also right minimal. 
\item A \emph{left} $\CX$\emph{-approximation} of $A$ in $\CC$ is a morphism $A\to X$ in $\CC$ with $X\in\CX$, such that for any object $X'\in \CX$ we have an exact sequence $$\Hom_{\CA}(X,X')\to\Hom_{\CA}(A,X')\to 0.$$A left $\CX$-approximation is called a \emph{minimal} left $\CX$-approximation if it is also left minimal.
\end{enumerate}
\end{defn}
The terminology of approximations was introduced in \cite{AuslanderReiten-apps-of-contravariantly-finite-subcats},
but the same notions were established independently by Enochs \cite{Enochs-inj-flat-covers-envelopes-and-resolvents}
specifically for the subcategories of injective objects and projective
objects in a module category. The term `preenvelope' (respectively,
`precover') in \cite{Enochs-inj-flat-covers-envelopes-and-resolvents}
corresponds to the notion of left\emph{ }(respectively, right) approximation.
\begin{lem}[Triangulated Wakamatsu's Lemma]
\label{lem:triangulated wakamatsu}\emph{\cite[Lem. 2.1]{Jorgensen-auslander-reiten-triangles-in-subcategories}}
Let $\CX$ be an extension-closed full subcategory of $\CC$ that is closed under isomorphisms and direct summands.\emph{\begin{enumerate}[(i)]
\item \emph{Suppose} $X\overset{x}{\longrightarrow}A$ \emph{is a minimal right} $\CX$\emph{-approximation of} $A$ \emph{in} $\CC$\emph{, which completes to a triangle} $\sus^{-1}A \overset{w}{\longrightarrow}Y \longrightarrow X \overset{x}{\longrightarrow}A$\emph{. Then} $w\colon\sus^{-1}A\to Y$ \emph{is a left} $\CX^{\perp_{1}}$\emph{-approximation of} $\sus^{-1}A$\emph{.}
\item \emph{Suppose} $A\overset{x'}{\longrightarrow}X'$ \emph{is a minimal left} $\CX$\emph{-approximation of} $A$ \emph{in} $\CC$\emph{, which completes to a triangle} $A \overset{x'}{\longrightarrow}X' \longrightarrow Z \overset{z}{\longrightarrow}\sus A$\emph{. Then} $z\colon Z\to \sus A$ \emph{is a right} $\tensor[^{\perp_{1}}]{\CX}{}$\emph{-approximation of} $\sus A$\emph{.}
\end{enumerate}}
\end{lem}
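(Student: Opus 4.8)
The plan is to prove part (i) in full and to obtain part (ii) by a dual argument. Unwinding the definition of a left approximation, showing that $w\colon\sus^{-1}A\to Y$ is a left $\CX^{\perp_1}$-approximation requires two things: (a) the codomain $Y$ lies in $\CX^{\perp_1}$; and (b) every morphism $\sus^{-1}A\to W$ with $W\in\CX^{\perp_1}$ factors through $w$. Property (b) is the routine part and uses neither minimality of $x$ nor extension-closedness of $\CX$; property (a) is the heart of the matter, and this is where both hypotheses enter. Write the given triangle as $\sus^{-1}A\xrightarrow{w}Y\xrightarrow{y}X\xrightarrow{x}A$.

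For (b), I would rotate backwards to the triangle $\sus^{-1}X\xrightarrow{-\sus^{-1}x}\sus^{-1}A\xrightarrow{w}Y\xrightarrow{y}X$ and apply $\Hom_\CC(-,W)$ for $W\in\CX^{\perp_1}$. Exactness shows $w^{*}\colon\Hom_\CC(Y,W)\to\Hom_\CC(\sus^{-1}A,W)$ is surjective as soon as $\Hom_\CC(\sus^{-1}X,W)=0$; but $\Hom_\CC(\sus^{-1}X,W)\cong\Ext_\CC^1(X,W)$, which vanishes because $X\in\CX$ and $W\in\CX^{\perp_1}$. Hence any $f\colon\sus^{-1}A\to W$ is of the form $f=gw$ for some $g\colon Y\to W$.

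For (a), fix $X'\in\CX$ and an element of $\Ext_\CC^1(X',Y)=\Hom_\CC(X',\sus Y)$, represented by a triangle $Y\xrightarrow{a}E\xrightarrow{b}X'\xrightarrow{\eta}\sus Y$; I must show $\eta=0$. I apply the octahedral axiom to the composite $\sus^{-1}A\xrightarrow{w}Y\xrightarrow{a}E$: since the cone of $w$ is $X$ (with connecting morphism $x$) and the cone of $a$ is $X'$, this produces an object $F$ together with a triangle $X\xrightarrow{\bar y}F\xrightarrow{\bar b}X'\xrightarrow{(\sus y)\eta}\sus X$ and a triangle $\sus^{-1}A\to E\xrightarrow{p}F\xrightarrow{\bar x}A$, where the octahedral commutativity gives $\bar x\bar y=x$. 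Because $\CX$ is extension-closed and $X,X'\in\CX$, the first triangle forces $F\in\CX$. Now the key step: since $F\in\CX$, the morphism $\bar x\colon F\to A$ factors through the right $\CX$-approximation $x$, say $\bar x=xs$; then $x(s\bar y)=\bar x\bar y=x$, so right minimality of $x$ forces $s\bar y$ to be an automorphism of $X$, whence $\bar y\colon X\to F$ is a split monomorphism. A split monomorphism has zero-connecting-morphism triangle, so $(\sus y)\circ\eta=0$. Finally, applying $\Hom_\CC(X',-)$ to the suspended triangle $A\xrightarrow{\sus w}\sus Y\xrightarrow{\sus y}\sus X$ and using $(\sus y)\circ\eta=0$ gives $\eta=(\sus w)\circ\mu$ for some $\mu\colon X'\to A$; as $X'\in\CX$, the morphism $\mu$ factors through $x$, and since $(\sus w)\circ x=0$ we conclude $\eta=0$. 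Therefore $\Ext_\CC^1(\CX,Y)=0$, i.e. $Y\in\CX^{\perp_1}$, and together with (b) this proves (i).

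Part (ii) is obtained dually: one applies the octahedral axiom to $E\xrightarrow{b}Z\xrightarrow{z}\sus A$, replaces the right $\CX$-approximation and right minimality of $x$ by the left $\CX$-approximation and left minimality of $x'$, and replaces the split-monomorphism argument by a split-epimorphism one. The only point needing care is that $\CX$ is not assumed closed under $\sus$, so in (ii) one must first desuspend the octahedral triangle $\sus X''\to G\to\sus X'\to\sus^2X''$ to $X''\to\sus^{-1}G\to X'\to\sus X''$ before invoking extension-closedness to get $\sus^{-1}G\in\CX$. I expect the main obstacle to be bookkeeping the octahedral diagram so that the identity $\bar x\bar y=x$ (respectively its dual) is genuinely available, since this relation is precisely what links extension-closedness of $\CX$ to minimality of the approximation.
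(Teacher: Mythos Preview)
The paper does not provide its own proof of this lemma; it is merely quoted with a citation to \cite[Lem. 2.1]{Jorgensen-auslander-reiten-triangles-in-subcategories}. Your argument is correct and is essentially the standard proof given in that reference: the factorisation property (b) follows from the long exact sequence and the vanishing $\Hom_{\CC}(\sus^{-1}X,W)\cong\Ext_{\CC}^{1}(X,W)=0$, while the membership $Y\in\CX^{\perp_{1}}$ in (a) is obtained via the octahedral axiom, extension-closedness of $\CX$ to get $F\in\CX$, and then right minimality of $x$ to force $\bar{y}$ to be split monic, whence the connecting map $(\sus y)\eta$ vanishes and a second use of the approximation property kills $\eta$. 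Your identification $\bar{x}\bar{y}=x$ is indeed one of the octahedral commutativities (it is the relation ``$mu=i$'' in the usual labelling), so the bookkeeping concern you flag is not an obstacle. The remark about desuspending the triangle in part (ii) before invoking extension-closedness is a valid and necessary observation, since $\CX$ is not assumed closed under suspension.
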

Although the notion of a contravariantly (respectively, covariantly)
finite subcategory (see below) is related to the idea of right (respectively,
left) approximations\emph{, }it dates back to \cite[p. 81]{AuslanderSmalo-preprojective-modules}
in which these concepts were defined in the context of module categories.
\begin{defn}
\label{def:contra-co-functorially finite}\cite[pp. 114, 142]{AuslanderReiten-apps-of-contravariantly-finite-subcats}
Let $\CX\subseteq\CC$ be a full subcategory, closed under isomorphisms
and direct summands. We say $\CX$ is \emph{contravariantly} (respectively,
\emph{covariantly}) \emph{finite} if $A$ has a right (respectively,
left) $\CX$-approximation for each $A\in\CC$. If $\CX$ is both
contravariantly finite and covariantly finite, then $\CX$ is called
\emph{functorially finite}.
\end{defn}
The next proposition collects some elementary properties about
cotorsion pairs that will be very useful in the sequel; see for example
\cite{IyamaYoshino-mutation-in-tri-cats-rigid-CM-mods}
or \cite{Nakaoka-cotorsion-pairs-I}.
Recall that if $\CU$ is a full, additive subcategory of $\CC$, then
$\add\CU$ denotes the full, additive subcategory of $\CC$ that consists
of objects of $\CC$ which are isomorphic to direct summands of finite
direct sums of objects of $\CU$.
\begin{prop}
\label{prop:elementary properties for cotorsion pair}Let $(\CU,\CV)$
be a cotorsion pair\emph{ on $\CC$.\begin{enumerate}[(i)]
\item \cite[p. 123]{IyamaYoshino-mutation-in-tri-cats-rigid-CM-mods}\emph{,} \cite[Rem. 2.3]{Nakaoka-cotorsion-pairs-I} \emph{We have} $\CU = \tensor[^{\perp_1}]{\CV}{}$ \emph{and} $\CV=\CU^{\perp_1}$\emph{.}
\item \cite[p. 123]{IyamaYoshino-mutation-in-tri-cats-rigid-CM-mods}\emph{,} \cite[Lem. 2.14]{Nakaoka-twin-cotorsion-pairs} \emph{Let} $X$ be an object in $\CC$\emph{. Since} $(\CU,\CV)$ \emph{is a cotorsion pair, there is a triangle} $U\overset{u}{\longrightarrow}X\overset{v}{\longrightarrow}\sus V\overset{w}{\longrightarrow}\sus U$\emph{, where} $U\in\CU$ \emph{and} $V\in\CV$\emph{. Then the morphism} $u\colon U\to X$ \emph{is a right} $\CU$\emph{-approximation of} $X$ \emph{and the morphism} $v\colon X\to\sus V$ \emph{is a left} $\sus\CV$\emph{-approximation of} $X$\emph{.}
\item \emph{The subcategory} $\CU$ \emph{is contravariantly finite and the subcategory} $\CV$ \emph{is covariantly finite.}
\item \cite[Rem. 2.4]{Nakaoka-cotorsion-pairs-I} \emph{The subcategories} $\CU$ \emph{and} $\CV$ \emph{are extension-closed.}
\end{enumerate}}
\end{prop}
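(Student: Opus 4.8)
The plan is to extract all four items from the two defining properties of a cotorsion pair, namely $\Ext^1_\CC(\CU,\CV)=0$ and $\CC=\CU*\sus\CV$, using only that $\sus$ is an autoequivalence of $\CC$ and that $\CU$ and $\CV$ are closed under direct summands. The natural order is to prove (i) first, then (ii), and finally to deduce (iii) and (iv) from these.

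For (i), the inclusions $\CU\subseteq {}^{\perp_1}\CV$ and $\CV\subseteq\CU^{\perp_1}$ are immediate from $\Ext^1_\CC(\CU,\CV)=0$. To prove $\CU^{\perp_1}\subseteq\CV$ I would first rewrite the second axiom in shifted form: applying $\sus^{-1}$ to $\CC=\CU*\sus\CV$ gives $\CC=\sus^{-1}\CU*\CV$, so every $X\in\CC$ lies in a triangle $\sus^{-1}U\xrightarrow{a}X\to V\to U$ with $U\in\CU$ and $V\in\CV$. If in addition $X\in\CU^{\perp_1}$, then $a\in\Hom_\CC(\sus^{-1}U,X)\cong\Ext^1_\CC(U,X)=0$, so this triangle splits and $X$ is a direct summand of $V$; as $\CV$ is closed under summands, $X\in\CV$. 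The inclusion ${}^{\perp_1}\CV\subseteq\CU$ is handled symmetrically: for $X\in{}^{\perp_1}\CV$, take a triangle $U\to X\xrightarrow{v}\sus V\to\sus U$ coming from $\CC=\CU*\sus\CV$; then $v\in\Hom_\CC(X,\sus V)\cong\Ext^1_\CC(X,V)=0$, so the triangle splits, $X$ is a summand of $U$, and hence $X\in\CU$. I expect this to be the one point requiring an idea rather than bookkeeping: one has to notice that the triangle supplied by the axiom must be rotated (or $\sus$-shifted) into precisely the form in which the relevant connecting morphism is an $\Ext^1$-class and therefore vanishes; once in that position, the splitting together with closure under summands finishes the argument.

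For (ii), given $X$ I would take the triangle $U\xrightarrow{u}X\xrightarrow{v}\sus V\xrightarrow{w}\sus U$ provided by $\CC=\CU*\sus\CV$. Applying $\Hom_\CC(U',-)$ to it for $U'\in\CU$, the term $\Hom_\CC(U',\sus V)\cong\Ext^1_\CC(U',V)$ vanishes, so $u_*\colon\Hom_\CC(U',U)\to\Hom_\CC(U',X)$ is surjective, i.e. $u$ is a right $\CU$-approximation; dually, applying $\Hom_\CC(-,\sus V')$ for $V'\in\CV$, the term $\Hom_\CC(U,\sus V')\cong\Ext^1_\CC(U,V')$ vanishes, so $v^*$ is surjective and $v$ is a left $\sus\CV$-approximation. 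Both assertions are just a matter of reading off two consecutive terms of a $\Hom$ long exact sequence.

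Finally, (iii) and (iv) follow formally. Item (iii) for $\CU$ is immediate from the first half of (ii); for $\CV$, in a triangle $\sus^{-1}U\to X\xrightarrow{b}V\to U$ arising from $\CC=\sus^{-1}\CU*\CV$, applying $\Hom_\CC(-,V'')$ for $V''\in\CV$ and using $\Hom_\CC(\sus^{-1}U,V'')\cong\Ext^1_\CC(U,V'')=0$ shows $b^*$ is surjective, so $b$ is a left $\CV$-approximation of $X$ and $\CV$ is covariantly finite. For (iv), by (i) it suffices to check the orthogonality conditions defining $\CU$ and $\CV$: given a triangle $U_1\to X\to U_2\to\sus U_1$ with $U_1,U_2\in\CU$, applying $\Hom_\CC(-,\sus V)$ for $V\in\CV$ places $\Hom_\CC(X,\sus V)\cong\Ext^1_\CC(X,V)$ between $\Ext^1_\CC(U_2,V)=0$ and $\Ext^1_\CC(U_1,V)=0$ in an exact sequence, forcing it to vanish, so $X\in{}^{\perp_1}\CV=\CU$; the closure of $\CV$ under extensions is dual, obtained by applying $\Hom_\CC(U,-)$ for $U\in\CU$ to a triangle $V_1\to Y\to V_2\to\sus V_1$ with $V_1,V_2\in\CV$.
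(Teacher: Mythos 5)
Your proof is correct, and it is the standard argument: the paper itself offers no proof of this proposition, delegating each item to the cited sources, and your steps (splitting a triangle whose connecting map lies in a vanishing $\Ext^1$-group plus closure under direct summands for (i), and reading off long exact $\Hom$-sequences for (ii)--(iv)) are exactly the arguments those references use. In effect you have supplied the routine verification the paper omits, in the same way it is done in the literature, so there is nothing to flag.
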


\begin{defn}
\label{def:twin cotorsion pair in tri'd cat}\cite[Def. 2.7]{Nakaoka-twin-cotorsion-pairs}
Let $(\CS,\CT)$ and $(\CU,\CV)$ be two cotorsion pairs on $\CC$.
The ordered pair $((\CS,\CT),(\CU,\CV))$ is called a \emph{twin cotorsion
pair (on $\CC$) }if $\Ext_{\CC}^{1}(\CS,\CV)=0$.
\end{defn}
The following easily verifiable result is often useful.
\begin{prop}
\emph{\cite[p. 198]{Nakaoka-twin-cotorsion-pairs}}
Let $(\CS,\CT)$ and $(\CU,\CV)$ be cotorsion pairs on $\CC$. Then
$((\CS,\CT),(\CU,\CV))$ is a twin cotorsion pair $\iff$ $\CS\subseteq\CU$
$\iff$ $\CV\subseteq\CT$.\end{prop}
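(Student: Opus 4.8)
The plan is to derive all three equivalences directly from the characterisation of a cotorsion pair in terms of its $\Ext^1$-perpendicular subcategories, recorded in Proposition \ref{prop:elementary properties for cotorsion pair}(i). By Definition \ref{def:twin cotorsion pair in tri'd cat}, the pair $((\CS,\CT),(\CU,\CV))$ is a twin cotorsion pair precisely when $\Ext^1_\CC(\CS,\CV)=0$, and by Definition \ref{def:i-th perp definitions (Hom or Ext^i perps)} this means $\Ext^1_\CC(S,V)=0$ for every $S\in\CS$ and every $V\in\CV$. So the entire statement reduces to rewriting the single condition $\Ext^1_\CC(\CS,\CV)=0$ in two equivalent ways.

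First I would apply Proposition \ref{prop:elementary properties for cotorsion pair}(i) to the cotorsion pair $(\CS,\CT)$, which gives $\CT=\CS^{\perp_1}=\{X\in\CC\mid\Ext^1_\CC(\CS,X)=0\}$. Then $\Ext^1_\CC(\CS,\CV)=0$ holds if and only if every $V\in\CV$ satisfies $\Ext^1_\CC(\CS,V)=0$, which is exactly the assertion that $V\in\CS^{\perp_1}=\CT$ for every $V\in\CV$, i.e. $\CV\subseteq\CT$. This establishes the equivalence ``$((\CS,\CT),(\CU,\CV))$ is a twin cotorsion pair $\iff\CV\subseteq\CT$''.

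Symmetrically, applying Proposition \ref{prop:elementary properties for cotorsion pair}(i) to $(\CU,\CV)$ gives $\CU={}^{\perp_1}\CV=\{X\in\CC\mid\Ext^1_\CC(X,\CV)=0\}$. Reading the condition $\Ext^1_\CC(\CS,\CV)=0$ ``from the other side'', it holds if and only if every $S\in\CS$ satisfies $\Ext^1_\CC(S,\CV)=0$, i.e. $S\in{}^{\perp_1}\CV=\CU$ for every $S\in\CS$, which is $\CS\subseteq\CU$. This yields ``$((\CS,\CT),(\CU,\CV))$ is a twin cotorsion pair $\iff\CS\subseteq\CU$'', and combining the two biconditionals gives the full chain of equivalences.

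There is no genuine obstacle here: the only point requiring a little care is the bookkeeping involved in unpacking $\Ext^1_\CC(\CS,\CV)=0$ as a family of vanishing conditions and matching it against the two perpendicular descriptions, together with the observation that both directions of each biconditional are immediate once Proposition \ref{prop:elementary properties for cotorsion pair}(i) is available. In particular, no appeal to the approximation triangles or to the identity $\CC=\CU*\sus\CV$ is needed, and a fully written proof would amount to little more than the two displayed lines above.
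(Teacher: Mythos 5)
Your argument is correct: unpacking $\Ext^1_{\CC}(\CS,\CV)=0$ against the identities $\CT=\CS^{\perp_1}$ and $\CU={}^{\perp_1}\CV$ from Proposition \ref{prop:elementary properties for cotorsion pair}(i) gives both biconditionals at once, and no use of $\CC=\CU*\sus\CV$ is needed. The paper states this result without proof (as "easily verifiable", citing Nakaoka), and your two-line verification is precisely the intended argument.
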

Throughout the remainder of this section, let $((\CS,\CT),(\CU,\CV))$ be a twin cotorsion pair on $\CC$.
\begin{defn}
\cite[Def. 2.8]{Nakaoka-twin-cotorsion-pairs}
We define full
subcategories of $\CC$ as follows:
\[
\begin{array}{cccc}
\CW\deff\CT\cap\CU, & \CC^{-}\deff\sus^{-1}\CS*\CW, & \CC^{+}\deff\CW*\sus\CV, & \CH\deff\CC^{-}\cap\CC^{+}.\end{array}
\]

\end{defn}
From this definition, we immediately see that $\CW$ is contained
in the subcategories $\CC^{-},$ $\CC^{+}$ and $\CH$; and that $\CW$
is extension-closed as $\CT$ and $\CU$ are extension-closed. It is also clear
that $\CW$, $\CC^{-}$, $\CC^{+}$ and $\CH$ are additive and closed under isomorphisms.
\begin{prop}
The subcategories
$\CW$, $\CC^{-}$, $\CC^{+}$ and $\CH$ are all closed under direct
summands.\end{prop}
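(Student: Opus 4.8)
The statement for $\CW$ is immediate: $\CT$ and $\CU$ are closed under direct summands, being components of cotorsion pairs, so their intersection $\CW=\CT\cap\CU$ is too; and once $\CC^{-}$ and $\CC^{+}$ are shown to be closed under direct summands, so is $\CH=\CC^{-}\cap\CC^{+}$, for the same reason. Hence the substance of the proposition concerns the two $*$-products $\CC^{-}=\sus^{-1}\CS*\CW$ and $\CC^{+}=\CW*\sus\CV$.

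The plan is to derive these from the general fact that, for full subcategories $\CX,\CY\subseteq\CC$ closed under isomorphisms and direct summands with $\Hom_{\CC}(\CX,\CY)=0$, the subcategory $\CX*\CY$ is closed under direct summands. The hypotheses are verified using the (twin) cotorsion-pair axioms: $\sus^{-1}\CS$ and $\sus\CV$ are closed under direct summands because $\CS$ and $\CV$ are and $\sus^{\pm1}$ is an autoequivalence, $\CW$ is so by the first paragraph, and the orthogonality comes out of the Ext-vanishing --- since $\CW\subseteq\CU$ we get $\Hom_{\CC}(\CW,\sus\CV)\iso\Ext_{\CC}^{1}(\CW,\CV)\subseteq\Ext_{\CC}^{1}(\CU,\CV)=0$, and since $\CW\subseteq\CT$ we get $\Hom_{\CC}(\sus^{-1}\CS,\CW)\iso\Ext_{\CC}^{1}(\CS,\CW)\subseteq\Ext_{\CC}^{1}(\CS,\CT)=0$. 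Applying the general fact to $(\sus^{-1}\CS,\CW)$ and to $(\CW,\sus\CV)$ then gives that $\CC^{-}$, $\CC^{+}$ and hence $\CH$ are closed under direct summands.

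The one nontrivial ingredient, and the step I expect to be the main obstacle, is the stability of $*$-products under direct summands. Given a triangle $X\to M_{1}\oplus M_{2}\to Y\to\sus X$ with $X\in\CX$ and $Y\in\CY$, write the first two maps $f,g$ in block form relative to $M_{1}\oplus M_{2}$, with components $a_{i}\colon X\to M_{i}$ and $c_{i}\colon M_{i}\to Y$; then $c_{1}a_{1}+c_{2}a_{2}=gf=0$ together with $\Hom_{\CC}(X,Y)=0$ forces $c_{i}a_{i}=0$ for each $i$. Completing $a_{i}$ to a triangle $X\to M_{i}\to C_{i}\to\sus X$, the map $c_{i}$ then factors through $M_{i}\to C_{i}$, and an octahedron on the composite $X\to X\oplus X\to M_{1}\oplus M_{2}$ (with first map the diagonal) yields a triangle $X\to Y\to C_{1}\oplus C_{2}\to\sus X$ whose leftmost morphism lies in $\Hom_{\CC}(\CX,\CY)=0$, so it splits: $C_{1}\oplus C_{2}\iso Y\oplus\sus X$. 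The delicate remaining point is to isolate a genuine $\CY$-summand inside this, and rather than grinding through it one may cite the corresponding stability lemma from the literature on (co)torsion pairs in triangulated categories. Alternatively, in the Krull--Schmidt situation of Section \ref{sec:application to cluster category} there is a cleaner route: minimal right $\CU$-approximations exist and are right minimal, and one checks that $X\in\CC^{+}$ if and only if the source of the minimal right $\CU$-approximation of $X$ lies in $\CW$ (and dually $X\in\CC^{-}$ if and only if the minimal left $\CW$-approximation of $X$ has cone in $\CS$); since minimal approximations of a direct sum are the direct sums of the minimal approximations, and $\CW$ and $\CS$ are closed under summands, these characterisations are visibly closed under direct summands.
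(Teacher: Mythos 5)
Your reductions for $\CW$ and $\CH$ are fine, and the octahedron you describe does yield a split triangle $C_{1}\oplus C_{2}\iso Y\oplus\sus X$; but the step you defer --- ``isolate a genuine $\CY$-summand'' --- is not a technicality that can be outsourced: it is the entire content, and the general fact your reduction rests on (if $\Hom_{\CC}(\CX,\CY)=0$ then $\CX*\CY$ is closed under direct summands) is \emph{false} for an arbitrary triangulated category, which is the generality in which this proposition is stated and later used (\S\ref{sub:Twin-Cotorsion-Pairs}--\S\ref{sec:Main-Result} assume no Krull--Schmidt or $\Hom$-finiteness). Concretely, let $R$ be a Dedekind domain with a non-principal ideal $I$, so $I\oplus I^{-1}\iso R^{2}$, and let $\CC\subseteq K^{b}(\proj R)$ be the dense (non-thick) triangulated subcategory of complexes whose class in $K_{0}(R)$ lies in $\mathbb{Z}[R]$. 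Take $\CX=\add(R)$ and $\CY=\add(R[2])$ (stalk complexes in degrees $0$ and $2$); these are additive, closed under isomorphisms and direct summands in $\CC$, and $\Hom_{\CC}(\CX,\CY)=0$. Since $\Hom_{\CC}(R[2],R[1])=0$, every object of $\CX*\CY$ is isomorphic to some $R^{a}\oplus R^{b}[2]$. Now $Z_{1}=I\oplus I^{-1}[2]$ and $Z_{2}=I^{-1}\oplus I[2]$ have $K_{0}$-class $2[R]$, hence lie in $\CC$, and $Z_{1}\oplus Z_{2}\iso R^{2}\oplus R^{2}[2]\in\CX*\CY$, yet $Z_{1}\notin\CX*\CY$ because $H^{0}(Z_{1})\iso I$ is not free. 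So $\CX*\CY$ is not closed under summands. The ``stability lemma from the literature'' you would cite (Iyama--Yoshino, Prop.~2.1(1)) is proved under their standing Krull--Schmidt, $\Hom$-finite hypotheses --- exactly the caveat recorded in \S\ref{sub:Twin-Cotorsion-Pairs} --- so citing it here leaves a genuine gap.

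The tell-tale sign is that your argument uses only Hom-orthogonality of the two factors and never invokes $\CC=\CS*\sus\CT$ or the twin condition $\Ext_{\CC}^{1}(\CS,\CV)=0$; a complete proof along your lines would therefore establish the false general statement above. The paper's proof instead uses the extra structure: given $X_{1}\oplus X_{2}\in\CC^{-}$ it produces a triangle $\sus^{-1}S_{1}\to X_{1}\to T_{1}\to S_{1}$ from $\CC=\CS*\sus\CT$, shows $X\to W$ is a left $\CT$-approximation, and kills every morphism $T_{1}\to\sus V$ using $W\in\CU={}^{\perp_{1}}\CV$ together with $\Ext_{\CC}^{1}(\CS,\CV)=0$, so that $T_{1}\in\CT\cap\CU=\CW$. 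Your Krull--Schmidt fallback is reasonable in spirit for the application in \S\ref{sec:application to cluster category} (it is close to Buan--Marsh), but it does not prove the proposition in the stated generality, and as formulated it needs repair: $\CW$ is not known to be covariantly finite, so ``minimal left $\CW$-approximation'' should be replaced by the minimal right $\CU$-approximation (for $\CC^{+}$) and the minimal left $\CT$-approximation (for $\CC^{-}$), asking that its source, respectively target, lie in $\CW$, with Wakamatsu's Lemma (Lemma \ref{lem:triangulated wakamatsu}) supplying the cone conditions.
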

\begin{proof}
Since $\CT$ and $\CU$ are assumed to be closed under direct summands
(see Definition \ref{def:cotorsion pair in tri'd cat}), we immediately
see that $\CW$ is also closed under direct summands. That $\CH$ is closed
under direct summands will follow from $\CC^{-}$ and $\CC^{+}$ having this
property. We will give the proof just for $\CC^{-}$ as the proof
for $\CC^{+}$ is similar.

Suppose $X=X_{1}\oplus X_{2}\in\CC^{-}$, then there is a distinguished
triangle $\sus^{-1}S \overset{s}{\longrightarrow}X\overset{x}{\longrightarrow}W\overset{t}{\longrightarrow}S$ with $S\in\CS$ and $W\in\CW$. Since $\CC=\CS*\sus\CT$, there exists
a triangle $\begin{tikzcd}[column sep=1.3em] \sus^{-1}S_1\arrow{r}{a}&X_1\arrow{r}{b}&T_1\arrow{r}{c}& S_1 \end{tikzcd}$
where $S_{1}\in\CS$ and $T_{1}\in\CT$. Thus, it suffices to show
that $T_{1}\in\CU$ as then we will have $T_{1}\in\CT\cap\CU=\CW$, and hence $T_{1}\in\sus^{-1}\CS*\CW=\CC^{-}$.

First, we claim that $x\colon X\to W$ is a left $\CT$-approximation
of $X$. Indeed, if $T\in\CT$ then we get an exact sequence $\Hom_{\CC}(W,T)\to\Hom_{\CC}(X,T)\to\Hom_{\CC}(\sus^{-1}S,T)$
since $\Hom_{\CC}(-,T)$ is a cohomological functor (see \cite[Prop. I.1.2]{Happel-triangulated-cats-in-rep-theory}),
where $\Hom_{\CC}(\sus^{-1}S,T)\iso\Hom_{\CC}(S,\sus T)=\Ext_{\CC}^{1}(S,T)=0$
since $(\CS,\CT)$ is a cotorsion pair. Thus, $\Hom_{\CC}(W,T)\to\Hom_{\CC}(X,T)$
is surjective for $T\in\CT$ and $x\colon X\to W$ is a left $\CT$-approximation
of $X$. 

In order to show $T_{1}\in\CU,$ it is enough to show that any $v\colon T_{1}\to\sus V$
is in fact the zero map as $\CU=\tensor[^{\perp_{1}}]{\CV}{}$ (see Proposition \ref{prop:elementary properties for cotorsion pair}). Let
$v\colon T_{1}\to\sus V$ be arbitrary. Since $b\pi_{1}\colon X=X_{1}\oplus X_{2}\to T_{1}$
is a morphism with codomain in $\CT$, where $\pi_{1}\colon X\to X_{1}$
is the canonical projection, it must factor through the left $\CT$-approximation
$x\colon X\to W$. That is, there exists $d\colon W\to T_{1}$ such
that $dx=b\pi_{1}$. We then have $(vb)\pi_{1}=vdx=0$, because $vd\colon W\to\sus V$
vanishes as $W\in\CW\subseteq\CU=\tensor[^{\perp_{1}}]{\CV}{}$. This in turn implies $vb=0$ as $\pi_{1}$ is an epimorphism.
Since $\begin{tikzcd}[column sep=1.3em] \sus^{-1}S_{1}\arrow{r}{a}&X_{1}\arrow{r}{b}&T_{1}\arrow{r}{c}& S_{1} \end{tikzcd}$
is a triangle, we see that $v\colon T_{1}\to\sus V$ must factor through
$c\colon T_{1}\to S_{1}$. Thus, $v=fc$ for some $f\in\Hom_{\CC}(S_{1},\sus V)=\Ext_{\CC}^{1}(S_{1},V)=0$
by definition of a twin cotorsion pair. Hence, $v=0$ and we are done.
\end{proof}
We now recall some notions from \cite{Nakaoka-twin-cotorsion-pairs}
needed for the remainder of this section.
\begin{defn}
\label{def:k_X for twin cotorsion pair}\cite[Def. 3.1]{Nakaoka-twin-cotorsion-pairs}
For
$X\in\CC$, we define $K_{X}\in\CC$ and a morphism $k_{X}\colon K_{X}\to X$
as follows. Since $\CS*\sus\CT=\CC=\CU*\sus\CV$, we have two triangles
$\begin{tikzcd}[column sep=1.3em] \sus^{-1}S\arrow{r}{}&X\arrow{r}{a}&T\arrow{r}{}& S \end{tikzcd}$
$(S\in\CS$, $T\in\CT)$ and $\begin{tikzcd}[column sep=1.3em] U\arrow{r}{}&T\arrow{r}{b}&\sus V\arrow{r}{}& \sus U \end{tikzcd}$
$(U\in\CU$, $V\in\CV)$. Then we may complete the composition $ba\colon X\to\sus V$
to a triangle $$\begin{tikzcd}V \arrow{r}& K_X \arrow{r}{k_X}& X \arrow{r}{ba}& \sus V.\end{tikzcd}$$
\end{defn}

\begin{defn}
\label{def:z_X for twin cotorsion pair}\cite[Def. 3.4]{Nakaoka-twin-cotorsion-pairs}
For
$X\in\CC$, we define $Z_{X}\in\CC$ and $z_{X}\colon X\to Z_{X}$
as follows. Since $\CS*\sus\CT=\CC=\CU*\sus\CV$, we have two triangles
$\begin{tikzcd}[column sep=1.3em]V \arrow{r}{}&U\arrow{r}{c}&X\arrow{r}{}&\sus V \end{tikzcd}$
$(U\in\CU$, $V\in\CV)$ and $\begin{tikzcd}[column sep=1.3em] \sus^{-1}T\arrow{r}{}&\sus^{-1}S\arrow{r}{d}&U\arrow{r}{}&T \end{tikzcd}$
$(S\in\CS$, $T\in\CT)$. Then we may complete the composition $cd\colon\sus^{-1}S\to X$
to a triangle $$\begin{tikzcd}\sus^{-1}S\arrow{r}{cd}& X \arrow{r}{z_X}& Z_X \arrow{r}& S.\end{tikzcd}$$
\end{defn}

\begin{defn}
\label{def:m_f for twin cotorsion pair}\cite[Def. 4.1]{Nakaoka-twin-cotorsion-pairs}
Let
$f\colon X\to Y$ be a morphism in $\CC$ with $X\in\CC^{-}$. We
define $M_{f}\in\CC$ and $m_{f}\colon Y\to M_{f}$ as follows. Since
$X\in\CC^{-}$, there is a triangle $\begin{tikzcd}[column sep=1.3em] \sus^{-1}S\arrow{r}{s}&X\arrow{r}{}&W\arrow{r}{}&S \end{tikzcd}$
$(S\in\CS$, $W\in\CW)$. Then we may complete $fs\colon\sus^{-1}S\to Y$
to a triangle $$\begin{tikzcd}\sus^{-1}S \arrow{r}{fs}& Y \arrow{r}{m_f}& M_f \arrow{r} & S.\end{tikzcd}$$
\end{defn}

\begin{defn}
\label{def:l_f for twin cotorsion pair}\cite[Rem. 4.3]{Nakaoka-twin-cotorsion-pairs}
Let
$f\colon X\to Y$ be a morphism in $\CC$ with $Y\in\CC^{+}$. We
define $L_{f}\in\CC$ and $l_{f}\colon L_{f}\to X$ as follows. Since
$Y\in\CC^{+}$, there is a triangle $\begin{tikzcd}[column sep=1.3em] W\arrow{r}{}&Y\arrow{r}{v}&\sus V\arrow{r}{}&\sus W \end{tikzcd}$
$(W\in\CW$, $V\in\CV)$. Then we may complete $vf\colon X\to\sus V$
to a triangle $$\begin{tikzcd} V\arrow{r}&L_f\arrow{r}{l_f}&X\arrow{r}{vf}&\sus V.\end{tikzcd}$$
\end{defn}
We now present strengthened versions of \cite[Claim 3.2]{Nakaoka-twin-cotorsion-pairs}
and \cite[Claim 3.5]{Nakaoka-twin-cotorsion-pairs}.
\begin{prop}
Suppose $C$ is an arbitrary object of $\CC$. Then\emph{\begin{enumerate}[(i)]
\item $K_C\in\CC^{-}$\emph{;}
\item $C\in\CC^{+} \iff K_C\in\CC^{+} \iff K_C\in\CH$\emph{;}
\item $Z_C\in\CC^{+}$\emph{; and}
\item $C\in\CC^{-} \iff Z_C\in\CC^{-} \iff Z_C\in\CH$\emph{.}
\end{enumerate}}\end{prop}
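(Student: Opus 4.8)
The plan is to prove the four statements by carefully unwinding the triangles defining $K_C$ and $Z_C$, and exploiting the duality between the two constructions so that (iii) and (iv) follow from (i) and (ii) by a symmetric argument. I will treat (i) and (ii) in detail; (iii) and (iv) are dual, obtained by reversing arrows and swapping the roles of $(\CS,\CT)$ with $(\CV,\CU)$ (more precisely, of $\CC^{-}$ with $\CC^{+}$, of $K$ with $Z$, and of the approximation triangles accordingly).

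For (i), recall from Definition \ref{def:k_X for twin cotorsion pair} that $K_C$ sits in a triangle $V\to K_C\to C\overset{ba}{\to}\sus V$ with $V\in\CV$, built from the triangles $\sus^{-1}S\to C\to T\to S$ ($S\in\CS$, $T\in\CT$) and $U\to T\overset{b}{\to}\sus V\to\sus U$ ($U\in\CU$, $V\in\CV$). To see $K_C\in\CC^{-}=\sus^{-1}\CS*\CW$, I would first observe that the morphism $C\to T$ followed by $b$ is exactly $ba$, and then produce a triangle exhibiting $K_C$ as an extension of an object of $\CW$ by $\sus^{-1}S$. The natural candidate comes from the octahedral axiom applied to the composite $C\to T\overset{b}{\to}\sus V$: combining the triangle on $a$, the triangle on $b$, and the triangle on $ba$, one obtains a triangle $\sus^{-1}S\to K_C\to U\to S$ (up to a shift bookkeeping), where $U\in\CU$. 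Since $U\in\CU$ and also $U\in\CT$ (because $U$ fits in the triangle $U\to T\to\sus V\to\sus U$ with $T\in\CT$ and $\sus V\in\sus\CV\subseteq\sus\CT$, and $\CT$ is extension-closed by Proposition \ref{prop:elementary properties for cotorsion pair}), we get $U\in\CT\cap\CU=\CW$, hence $K_C\in\sus^{-1}\CS*\CW=\CC^{-}$.

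For (ii), the implication $K_C\in\CC^{+}\Rightarrow K_C\in\CH$ is immediate from (i) and $\CH=\CC^{-}\cap\CC^{+}$, so it remains to show $C\in\CC^{+}\iff K_C\in\CC^{+}$. For $\Rightarrow$: if $C\in\CC^{+}=\CW*\sus\CV$, I would use the defining triangle $V\to K_C\to C\to\sus V$ together with a triangle $W'\to C\to\sus V'\to\sus W'$ ($W'\in\CW$, $V'\in\CV$) and the octahedral axiom on $K_C\to C\to\sus V'$ to produce a triangle $W'\to K_C\to \sus V''\to \sus W'$ with $V''$ an extension of $V'$ by $V$; since $\CV$ is extension-closed (Proposition \ref{prop:elementary properties for cotorsion pair}(iv)), $V''\in\CV$ and $K_C\in\CW*\sus\CV=\CC^{+}$. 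For $\Leftarrow$: conversely, if $K_C\in\CC^{+}$, the same defining triangle $V\to K_C\to C\to\sus V$ with $V\in\CV\subseteq\CC^{+}$ realises $C$ as the cone of a map between objects of $\CC^{+}$; so I need to know $\CC^{+}$ is closed under such cones, or argue directly. Here I would instead rotate to $K_C\to C\to\sus V\to\sus K_C$ and apply the octahedral axiom to the composite with a $\CC^{+}$-presentation of $K_C$, using that $\sus V\in\sus\CV$ and extension-closedness of $\CV$ and $\CW$ to reassemble a triangle $W\to C\to\sus\widetilde V\to\sus W$. This last reassembly is the step I expect to be the main obstacle: one must carefully track which octahedral vertex carries the $\CW$-part and which carries the $\sus\CV$-part, and verify the relevant $\Ext^{1}$-vanishing (using $\Ext^1_\CC(\CS,\CV)=0$ from Definition \ref{def:twin cotorsion pair in tri'd cat} and $\Ext^1_\CC(\CU,\CV)=0$) to ensure the connecting maps behave so that the outer objects genuinely land in $\CW$ and $\CV$ rather than merely in $\CT*\CU$-type subcategories.

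Finally, (iii) and (iv) follow by the formal dual of the above: $Z_C$ is defined (Definition \ref{def:z_X for twin cotorsion pair}) by completing a composite $\sus^{-1}S\to X\to Z_C\to S$ built from triangles $V\to U\to X\to\sus V$ and $\sus^{-1}T\to\sus^{-1}S\to U\to T$, which is exactly the arrow-reversal of the $K_C$ construction. Running the same octahedral arguments with arrows reversed — and invoking extension-closedness of $\CS$ (which holds since $\CS=\tensor[^{\perp_1}]{\CT}{}$ by Proposition \ref{prop:elementary properties for cotorsion pair}(i), applied to the cotorsion pair $(\CS,\CT)$, together with $\Ext^1_\CC(\CS,\CT)=0$), of $\CW$, and the twin-pair vanishing $\Ext^1_\CC(\CS,\CV)=0$ — gives $Z_C\in\CC^{+}$ and the chain of equivalences $C\in\CC^{-}\iff Z_C\in\CC^{-}\iff Z_C\in\CH$, completing the proof. $\qed$
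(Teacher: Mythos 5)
There is a genuine gap, and it sits in the forward implication of (ii) (hence dually of (iv)), not in the step you flag as the main obstacle. Apply the octahedral axiom to $K_C\overset{k_C}{\longrightarrow}C\overset{v'}{\longrightarrow}\sus V'$, where $W'\overset{w'}{\to}C\overset{v'}{\to}\sus V'\to\sus W'$ is your $\CC^{+}$-presentation of $C$: the cone of $k_C$ is $\sus V$ and the cone of $v'$ is $\sus W'$, so the octahedron gives a triangle $\sus V\to \operatorname{cone}(v'k_C)\to\sus W'\to\sus^{2}V$, whose connecting map lies in $\Hom_{\CC}(\sus W',\sus^{2}V)\iso\Ext^{1}_{\CC}(W',V)=0$ and hence splits. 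What you actually obtain is therefore a triangle $V\oplus W'\to K_C\to\sus V'\to\sus(V\oplus W')$: the object $V$ ends up on the $\CW$-side of the decomposition, not absorbed into the $\sus\CV$-part, and since $V$ need not lie in $\CU$ this only shows $K_C\in\CT*\sus\CV$. Your claimed output, a triangle $W'\to K_C\to\sus V''\to\sus W'$ with $V''$ an extension of $V'$ by $V$, does not follow; even if you lift $w'$ through $k_C$ (possible, again by $\Ext^{1}_{\CC}(W',V)=0$), the third vertex is only an extension of $\sus V'$ by $V$, i.e.\ lies in $\CV*\sus\CV$, not in $\sus\CV$. To finish one needs an extra input: for instance, first show $\CV\subseteq\CC^{+}$ (apply $\CC=\CU*\sus\CV$ to $V$ and rotate, using $\CV\subseteq\CT$ and extension-closure of $\CT$ to see the $\CU$-term lies in $\CW$), and then use associativity of $*$ together with extension-closure of $\CV$ to get $K_C\in(\CW*\sus\CV)*\sus\CV\subseteq\CW*\sus\CV$ --- which is in substance Nakaoka's Lemma 2.13. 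Ironically, the direction you single out as risky is fine: rotating to $K_C\to C\to\sus V\to\sus K_C$, taking a presentation $W\to K_C\to\sus V_1\to\sus W$ and applying the octahedron to the composite $W\to K_C\to C$ puts the cone of $W\to C$ in $\sus(V_1*V)\subseteq\sus\CV$ by extension-closure of $\CV$, with no $\Ext$-vanishing needed, so $C\in\CW*\sus\CV$.

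You should also be aware that your route is more self-contained than the paper's: the paper does not reprove (i), (iii) or the implication $C\in\CC^{+}\Rightarrow K_C\in\CH$ at all --- these are cited from Nakaoka's Claims 3.2 and 3.5 --- and its only new content is precisely the converse, obtained by rotating $V\to K_C\to C\to\sus V$ and invoking Nakaoka's Lemma 2.13(2) (that $\CC^{+}*\sus\CV\subseteq\CC^{+}$). Your argument for (i) is essentially Nakaoka's octahedron argument and is correct, though the justification that the third vertex $U$ lies in $\CT$ should use the rotated triangle $V\to U\to T\to\sus V$ with $V\in\CV\subseteq\CT$ and $T\in\CT$, rather than the triangle $U\to T\to\sus V\to\sus U$ as you quote it.
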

\begin{proof}
The proofs for (i) and (iii) are \cite[Claim 3.2 (1)]{Nakaoka-twin-cotorsion-pairs}
and \cite[Claim 3.5 (1)]{Nakaoka-twin-cotorsion-pairs},
respectively. Since $K_{C}\in\CC^{-}$, we immediately see that $K_{C}\in\CC^{+}$
if and only if $K_{C}\in\CH$. For (ii), the proof that $C\in\CC^{+}$
implies $K_{C}\in\CH$ is \cite[Claim 3.2 (2)]{Nakaoka-twin-cotorsion-pairs}.
Thus, we show the converse. There is a triangle $V\to K_{C}\to C\to\sus V$
where $V\in\CV$, so if $K_{C}\in\CC^{+}$ then $C\in\CC^{+}$ using
\cite[Lem. 2.13 (2)]{Nakaoka-twin-cotorsion-pairs}.
The proof of statement (iv) is similar.
\end{proof}
The next proposition follows from \cite[Prop. 3.6]{Nakaoka-twin-cotorsion-pairs}
and \cite[Prop. 3.7]{Nakaoka-twin-cotorsion-pairs},
but we state it in the language of approximations.
\begin{prop}
Suppose $C$ is an arbitrary object of $\CC$. \emph{\begin{enumerate}[(i)]
\item \emph{The morphism} $k_C\colon K_C \to C$ \emph{is a right} $\CC^{-}$\emph{-approximation of} $C$\emph{.}
\item \emph{The morphism} $z_C\colon C \to Z_C$ \emph{is a left} $\CC^{+}$\emph{-approximation of} $C$\emph{.}
\end{enumerate}}
\end{prop}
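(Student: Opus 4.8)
The plan is to unwind the definition of a right (respectively, left) approximation: for (i) it is enough to check that $K_C$ lies in $\CC^{-}$ and that every morphism into $C$ from an object of $\CC^{-}$ factors through $k_C$, and symmetrically for (ii). The memberships $K_C\in\CC^{-}$ and $Z_C\in\CC^{+}$ are exactly parts (i) and (iii) of the preceding proposition, so I only need the factorisation properties; these are formally dual, so I would prove the one for (i) and transcribe it for (ii).

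For (i), I would recall from Definition~\ref{def:k_X for twin cotorsion pair} that $k_C$ sits in a triangle $V\to K_C\overset{k_C}{\longrightarrow}C\overset{ba}{\longrightarrow}\sus V$ with $V\in\CV$, where $a\colon C\to T$ and $b\colon T\to\sus V$ are the second maps of the two triangles $\sus^{-1}S\to C\to T\to S$ ($S\in\CS$, $T\in\CT$) and $U\to T\to\sus V\to\sus U$ ($U\in\CU$, $V\in\CV$) used in that construction. Applying $\Hom_{\CC}(C',-)$ to this triangle, a morphism $g\colon C'\to C$ factors through $k_C$ exactly when $(ba)g=0$. Since $C'\in\CC^{-}=\sus^{-1}\CS*\CW$, there is a triangle $\sus^{-1}S'\to C'\overset{x'}{\longrightarrow}W'\to S'$ with $S'\in\CS$ and $W'\in\CW\subseteq\CT$; applying $\Hom_{\CC}(-,T)$ to this triangle and using $\Ext_{\CC}^{1}(S',T)=0$ (the same computation that appears in the proof that $\CC^{-}$ is closed under direct summands) shows $x'$ is a left $\CT$-approximation of $C'$. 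Hence $ag\colon C'\to T$ factors as $ag=hx'$ for some $h\colon W'\to T$, so $(ba)g=(bh)x'$; and $bh\colon W'\to\sus V$ is zero because $W'\in\CW\subseteq\CU$ and $\Ext_{\CC}^{1}(\CU,\CV)=0$. Thus $(ba)g=0$, so $g$ factors through $k_C$ and $k_C$ is a right $\CC^{-}$-approximation of $C$.

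For (ii), dually, Definition~\ref{def:z_X for twin cotorsion pair} puts $z_C$ in a triangle $\sus^{-1}S\overset{cd}{\longrightarrow}C\overset{z_C}{\longrightarrow}Z_C\to S$ with $S\in\CS$, where $c\colon U\to C$ ($U\in\CU$) and $d\colon\sus^{-1}S\to U$ are the relevant maps from that construction. A morphism $g\colon C\to C'$ with $C'\in\CC^{+}$ factors through $z_C$ exactly when $g(cd)=0$. Writing $C'\in\CC^{+}=\CW*\sus\CV$ by a triangle $W'\to C'\to\sus V'\to\sus W'$ with $W'\in\CW\subseteq\CU$ and $V'\in\CV$, one checks using $\Hom_{\CC}(U,-)$ and $\Ext_{\CC}^{1}(U,V')=0$ that $W'\to C'$ is a right $\CU$-approximation of $C'$; so $gc\colon U\to C'$ factors as $gc=w'h$ with $h\colon U\to W'$, and then $g(cd)=w'(hd)$ with $hd\colon\sus^{-1}S\to W'$ vanishing since $W'\in\CW\subseteq\CT$ and $\Ext_{\CC}^{1}(\CS,\CT)=0$. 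Hence $z_C$ is a left $\CC^{+}$-approximation of $C$.

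I do not anticipate a genuine obstacle: the argument is just a diagram chase through the three triangles attached to $k_C$ (respectively $z_C$) and to membership in $\CC^{-}$ (respectively $\CC^{+}$), the key observation being that the appropriate map of the defining triangle of $\CC^{-}$ is a left $\CT$-approximation (dually, a right $\CU$-approximation), which has already appeared in the summand-closure proof. The one point demanding care is the bookkeeping at the vanishing steps: in (i) one invokes $\CW\subseteq\CU$ with $\Ext_{\CC}^{1}(\CU,\CV)=0$, while in (ii) one invokes $\CW\subseteq\CT$ with $\Ext_{\CC}^{1}(\CS,\CT)=0$, and these two inclusions must not be swapped.
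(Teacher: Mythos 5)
Your proposal is correct. Note, however, that the paper itself gives no proof of this proposition: it is stated as an immediate consequence of Propositions 3.6 and 3.7 of Nakaoka's paper on twin cotorsion pairs, merely rephrased in the language of approximations. What you have written is a self-contained verification of those cited facts, and it is the expected one: you reduce the factorisation property to the vanishing of $(ba)g$ (respectively $g(cd)$) via the long exact sequence of the defining triangle of $k_C$ (respectively $z_C$), and then kill that composite using that the map $C'\to W'$ in the defining triangle of $\CC^{-}$ is a left $\CT$-approximation (dually, $W'\to C'$ is a right $\CU$-approximation) together with $\Ext^1_{\CC}(\CU,\CV)=0$ (dually $\Ext^1_{\CC}(\CS,\CT)=0$). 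Your observation that the left-$\CT$-approximation step is the same computation already used in the paper's proof that $\CC^{-}$ is closed under direct summands is accurate, and your handling of the two vanishing steps (via $\CW\subseteq\CU$ in (i) and $\CW\subseteq\CT$ in (ii)) is exactly right. So the only difference from the paper is that you prove what the paper delegates to a citation; there is no gap.
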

For a subcategory $\CA\subseteq\CC$ that is closed under finite direct sums, we will denote by $[\CA]$ the
two-sided ideal of $\CC$ such that $[\CA](X,Y)$ consists of all
morphisms $X\to Y$ that factor through an object in 
$\CA$. Note that if $\CA$ is a full, additive subcategory that is
closed under isomorphisms and direct summands, then $[\CA]$ coincides
with the ideal generated by identity morphisms $1_{A}$ such that
$A\in\CA$. With this notation we are in position to recall the definition
of the heart associated to a twin cotorsion pair. 
\begin{defn}
\label{def:quotients of C+ C- and H}\cite{Nakaoka-twin-cotorsion-pairs}
Recall
that $\CW$ is a subcategory of $\CC^{+}$, $\CC^{-}$ and $\CH$.
We define the following additive quotients $\overline{\CC^{+}}\deff\CC^{+}/[\CW]$,
$\overline{\CC^{-}}\deff\CC^{-}/[\CW]$ and $\overline{\CH}\deff\CH/[\CW]$.
We call the category $\overline{\CH}$ the \emph{heart }of $((\CS,\CT),(\CU,\CV))$
by analogy with \cite{Nakaoka-cotorsion-pairs-I}.
\end{defn}
Suppose $\CI$ an ideal of $\CC$. We will denote by $\overline{f}$
the coset $f+\CI(X,Y)$ in $\Hom_{\CC/\CI}(X,Y)$ of $f\in\Hom_{\CC}(X,Y)$.
The next result is a combination of \cite[Cor. 4.5]{Nakaoka-twin-cotorsion-pairs}
and \cite[Cor. 4.6]{Nakaoka-twin-cotorsion-pairs},
and most of the proof can be found there. We provide the missing link.
\begin{prop}
\label{prop:twin cotorsion pair, f in CH(A,B) is epic in quotient iff Z_M_f in CW iff M_f in CU iff next map in triangle for f factors through CU}Let $f\in\Hom_{\CH}(A,B)$ be a morphism in the subcategory $\CH$, which completes to a triangle $\begin{tikzcd}[column sep=1.3em] A\arrow{r}{f}&B\arrow{r}{g}&C\arrow{r}{}& \sus A\end{tikzcd}$ in $\CC$.
Then the following are equivalent:\emph{\begin{enumerate}[(i)]
\item $\overline{f}\in \Hom_{\overline{\CH}}(A,B)$ \emph{is an epimorphism;}
\item $Z_{M_{f}}\in\CW$\emph{, i.e.} $Z_{M_{f}}\iso 0$ \emph{in} $\overline{\CH}$\emph{;}
\item $M_f\in\CU$\emph{; and}
\item $g\colon B\to C$ \emph{factors through} $\CU$\emph{.}
\end{enumerate}}\end{prop}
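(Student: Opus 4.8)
The plan is to treat the equivalences among (i), (ii) and (iii) as essentially \cite[Cor. 4.5 and Cor. 4.6]{Nakaoka-twin-cotorsion-pairs}, so that the real task is to link these to (iv), and I would do this through a single application of the octahedral axiom. Recall from Definition \ref{def:m_f for twin cotorsion pair} that, after fixing a triangle $\sus^{-1}S \xrightarrow{s} A \xrightarrow{a} W \to S$ with $S \in \CS$ and $W \in \CW$ (which exists since $A \in \CH \subseteq \CC^{-}$), the object $M_f$ is the cone of the composite $fs \colon \sus^{-1}S \to B$. Applying the octahedral axiom to the factorisation $fs = f \circ s$ — using the fixed triangle for $s$, the given triangle $A \xrightarrow{f} B \xrightarrow{g} C \xrightarrow{h} \sus A$ for $f$, and the defining triangle $\sus^{-1}S \xrightarrow{fs} B \xrightarrow{m_f} M_f \to S$ for $fs$ — produces a triangle $W \xrightarrow{\alpha} M_f \xrightarrow{\beta} C \xrightarrow{\gamma} \sus W$ in which, up to sign, $\gamma = \sus(a) \circ h$ and $\beta \circ m_f = g$.

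Granting this triangle, the implication (iii) $\Rightarrow$ (iv) is immediate: if $M_f \in \CU$ then $g = \beta \circ m_f$ factors through $M_f$, hence through $\CU$. For the converse, assume $g$ factors through an object of $\CU$; I would prove $M_f \in \tensor[^{\perp_{1}}]{\CV}{} = \CU$ (Proposition \ref{prop:elementary properties for cotorsion pair}(i)) by showing $\Hom_{\CC}(M_f, \sus V) = 0$ for each $V \in \CV$. Applying $\Hom_{\CC}(-, \sus V)$ to $W \xrightarrow{\alpha} M_f \xrightarrow{\beta} C \xrightarrow{\gamma} \sus W$ and using $W \in \CW \subseteq \CU = \tensor[^{\perp_{1}}]{\CV}{}$, so that $\Hom_{\CC}(W, \sus V) = \Ext_{\CC}^{1}(W, V) = 0$, the vanishing of $\Hom_{\CC}(M_f, \sus V)$ reduces to the statement that every morphism $\phi \colon C \to \sus V$ factors through $\gamma$. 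This is a two-step chase. Since $g$ factors through $\CU$ and $\Ext_{\CC}^{1}(\CU, \CV) = 0$, we have $\phi \circ g = 0$, so $\phi$ factors through $h$, say $\phi = \sus(\psi_0) \circ h$ with $\psi_0 \colon A \to V$. Then $\psi_0 \circ s \colon \sus^{-1}S \to V$ corresponds to an element of $\Hom_{\CC}(S, \sus V) = \Ext_{\CC}^{1}(S, V)$, which vanishes because $((\CS,\CT),(\CU,\CV))$ is a twin cotorsion pair; hence $\psi_0 = \psi_1 \circ a$ for some $\psi_1 \colon W \to V$, and therefore $\phi = \sus(\psi_1) \circ \sus(a) \circ h = \sus(\psi_1) \circ \gamma$ factors through $\gamma$. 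This gives $M_f \in \CU$, completing (iv) $\Rightarrow$ (iii).

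The passage through the long exact $\Hom$-sequences is routine; the point needing care is reading off from the octahedral axiom the exact form $\gamma = \sus(a) \circ h$ of the connecting morphism together with $\beta \circ m_f = g$, since these are precisely what tie $M_f$ to the cone $C$ of $f$ and allow the two factorisation steps to be composed. I expect that bookkeeping to be the only real obstacle; everything else is forced by the $\Ext^{1}$-vanishing built into the twin cotorsion pair and into the cotorsion pairs $(\CS,\CT)$ and $(\CU,\CV)$.
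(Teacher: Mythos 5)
Your argument is correct, but it takes a route that differs from the paper's in a way worth noting. The paper proves only (iii) $\Rightarrow$ (iv), and does so without the octahedral axiom: since $gf = 0$, the composite $g \circ (fs_A)$ vanishes, so $g$ factors through the cone morphism $m_f \colon B \to M_f$, and $M_f \in \CU$ by hypothesis. The loop is then closed by citing \cite[Cor.\ 4.6]{Nakaoka-twin-cotorsion-pairs} for (iv) $\Rightarrow$ (i). You instead extract the full triangle $W \to M_f \to C \xrightarrow{\gamma} \sus W$ from the octahedral axiom — overkill for (iii) $\Rightarrow$ (iv), since $\beta \circ m_f = g$ is exactly what the paper's cheaper argument also delivers — but this buys you a self-contained proof of (iv) $\Rightarrow$ (iii) that bypasses Cor.\ 4.6 entirely. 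Your $\Hom(-,\sus V)$-chase is correct: the vanishing of $\Hom_{\CC}(W, \sus V)$ reduces $\Hom_{\CC}(M_f, \sus V) = 0$ to surjectivity of $\gamma^{\ast}$, and the two factorisations — $\phi g = 0$ from $\Ext^1_{\CC}(\CU, \CV) = 0$, then $\psi_0 s = 0$ from the twin-pair condition $\Ext^1_{\CC}(\CS, \CV) = 0$ — do compose via $\gamma = (\sus a) \circ h$ (up to sign) to give the result. One minor misattribution: Cor.\ 4.5 alone yields (i) $\Leftrightarrow$ (ii) $\Leftrightarrow$ (iii); Cor.\ 4.6 is specifically (iv) $\Rightarrow$ (i), so grouping it with Cor.\ 4.5 as ``the equivalence of (i)--(iii)'' is not quite accurate, though it does not affect the logic since you then prove (iii) $\Leftrightarrow$ (iv) directly. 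Net effect: the paper's proof is shorter by outsourcing one implication, yours is more transparent in showing exactly which $\Ext^1$-vanishings make the equivalence tick.
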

\begin{proof}
The equivalence of (i) -- (iii) is \cite[Cor. 4.5]{Nakaoka-twin-cotorsion-pairs},
and (iv) implies (i) is \cite[Cor. 4.6]{Nakaoka-twin-cotorsion-pairs}.
We prove (iii) implies (iv). To this end, suppose $M_{f}\in\CU$.
From Definition \ref{def:m_f for twin cotorsion pair}, there are triangles $\begin{tikzcd}[column sep=1.4em] \sus^{-1}S_A\arrow{r}{s_A}&A\arrow{r}{w_A}&W_A\arrow{r}{}&S_A \end{tikzcd}$ and $\begin{tikzcd}[column sep=1.7em] \sus^{-1}S_A\arrow{r}{fs_A}&B\arrow{r}{m_f}&M_f\arrow{r}{}&S_A,\end{tikzcd}$ where $S_{A}\in\CS$, $W_{A}\in\CW$. Then $g(fs_{A})=0$ as $gf=0$,
so $g$ factors through $m_{f}\colon B\to M_{f}$ where $M_{f}\in\CU$
by assumption. Hence, $g$ admits a factorisation through $\CU$ as
desired.
\end{proof}
To be explicit, we state the dual in full.
\begin{prop}
\label{prop:twin cotorsion pair, f in CH(A,B) is monic in quotient iff K_L_f in CW iff L_f in CT iff previous map in triangle for f factors through CT}Let $f\in\Hom_{\CH}(A,B)$ be a morphism in the subcategory $\CH$, which completes to a triangle $\begin{tikzcd}[column sep=1.3em] \sus^{-1}C\arrow{r}{h}&A\arrow{r}{f}&B\arrow{r}{}&C\end{tikzcd}$ in $\CC$.
Then the following are equivalent:\emph{\begin{enumerate}[(i)]
\item $\overline{f}\in \Hom_{\overline{\CH}}(A,B)$ \emph{is a monomorphism;}
\item $K_{L_{f}}\in\CW$\emph{, i.e.} $K_{L_{f}}\iso 0$ \emph{in} $\overline{\CH}$\emph{;}
\item $L_f\in\CT$\emph{; and}
\item $h\colon \sus^{-1}C\to A$ \emph{factors through} $\CT$\emph{.}
\end{enumerate}}
\end{prop}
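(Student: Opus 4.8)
The plan is to deduce Proposition~\ref{prop:twin cotorsion pair, f in CH(A,B) is monic in quotient iff K_L_f in CW iff L_f in CT iff previous map in triangle for f factors through CT} from Proposition~\ref{prop:twin cotorsion pair, f in CH(A,B) is epic in quotient iff Z_M_f in CW iff M_f in CU iff next map in triangle for f factors through CU} by a formal duality argument, together with one direct verification mirroring the ``$(\rom{3})\Rightarrow(\rom{4})$'' step proved there. A twin cotorsion pair $((\CS,\CT),(\CU,\CV))$ on $\CC$ gives rise to a twin cotorsion pair $((\CV^{\op},\CU^{\op}),(\CT^{\op},\CS^{\op}))$ on the opposite category $\CC^{\op}$ (with suspension $\sus^{-1}$); under this correspondence one checks that $\CW$, $\CC^{+}$, $\CC^{-}$ and $\CH$ are sent to $\CW^{\op}$, $(\CC^{-})^{\op}$, $(\CC^{+})^{\op}$ and $\CH^{\op}$ respectively, the construction $M_{f}$ of Definition~\ref{def:m_f for twin cotorsion pair} corresponds to the construction $L_{f}$ of Definition~\ref{def:l_f for twin cotorsion pair}, and $Z_{X}$ (Definition~\ref{def:z_X for twin cotorsion pair}) corresponds to $K_{X}$ (Definition~\ref{def:k_X for twin cotorsion pair}). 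Since an epimorphism in $\overline{\CH}$ is precisely a monomorphism in $\overline{\CH}^{\op}=\overline{\CH^{\op}}$, applying the already-proved proposition in $\CC^{\op}$ yields the equivalence of (\rom{1}), (\rom{2}) and (\rom{3}) here.

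For the reader's convenience, and because the excerpt states the dual ``in full'', I would also give the short direct argument for $(\rom{3})\Rightarrow(\rom{4})$ without passing to $\CC^{\op}$. Assume $L_{f}\in\CT$. From Definition~\ref{def:l_f for twin cotorsion pair} applied to $f$, there is a triangle
$$\begin{tikzcd}[column sep=1.3em] W_{B}\arrow{r}{}&B\arrow{r}{v_{B}}&\sus V_{B}\arrow{r}{}&\sus W_{B}\end{tikzcd}$$
with $W_{B}\in\CW$, $V_{B}\in\CV$, and a triangle
$$\begin{tikzcd}[column sep=1.3em] V_{B}\arrow{r}{}&L_{f}\arrow{r}{l_{f}}&A\arrow{r}{v_{B}f}&\sus V_{B}.\end{tikzcd}$$
Since $f h=0$ we have $v_{B}f h=0$, so $h\colon\sus^{-1}C\to A$ factors through $l_{f}\colon L_{f}\to A$; as $L_{f}\in\CT$ by hypothesis, $h$ factors through $\CT$. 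Conversely $(\rom{4})\Rightarrow(\rom{1})$ is the dual of \cite[Cor.~4.6]{Nakaoka-twin-cotorsion-pairs}, which one may either cite in its dual form or re-derive: if $h=h_{2}h_{1}$ with the intermediate object in $\CT$, then for any $\overline{g}\colon\sus^{-1}C\to\cdots$ one argues as in the epic case that precomposition with $\overline{f}$ reflects equality, giving that $\overline{f}$ is monic.

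The main point requiring care—the step I expect to be the principal (if minor) obstacle—is the bookkeeping in the opposite-category dictionary: one must verify that the pair $((\CV^{\op},\CU^{\op}),(\CT^{\op},\CS^{\op}))$ really is a twin cotorsion pair on $\CC^{\op}$ (in particular that $\Ext^{1}_{\CC^{\op}}(\CV^{\op},\CS^{\op})=0$, which is exactly $\Ext^{1}_{\CC}(\CS,\CV)=0$), that $\CW=\CT\cap\CU$ becomes $\CU^{\op}\cap\CT^{\op}=\CW^{\op}$, and that $\CC^{-}=\sus^{-1}\CS*\CW$ in $\CC$ matches $\CW^{\op}*\sus_{\op}\CV^{\op}=(\CW*\sus\CV)^{\op}=(\CC^{+})^{\op}$ in $\CC^{\op}$, with the analogous check for $\CC^{+}$. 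Once this dictionary is in place, the rest is purely formal. Given that the excerpt already supplies the self-contained $(\rom{3})\Rightarrow(\rom{4})$ computation above, I would in fact present the proof in that concrete form and merely remark that (\rom{1})$\Leftrightarrow$(\rom{2})$\Leftrightarrow$(\rom{3}) is the formal dual of Proposition~\ref{prop:twin cotorsion pair, f in CH(A,B) is epic in quotient iff Z_M_f in CW iff M_f in CU iff next map in triangle for f factors through CU}, citing \cite[Cor.~4.5, Cor.~4.6]{Nakaoka-twin-cotorsion-pairs} as appropriate.
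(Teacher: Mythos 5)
Your proposal is correct and is essentially the paper's own approach: the paper states this proposition only as the explicit dual of the epimorphism version and gives no separate proof, relying on exactly the opposite-category duality you spell out (your dictionary $((\CV^{\op},\CU^{\op}),(\CT^{\op},\CS^{\op}))$, $\CW\mapsto\CW^{\op}$, $\CC^{\pm}\mapsto(\CC^{\mp})^{\op}$, $M_f\leftrightarrow L_f$, $Z\leftrightarrow K$ checks out). Your direct argument that $L_f\in\CT$ implies $h$ factors through $\CT$ is precisely the mirror of the ``missing link'' the paper proves in the epic case, so nothing further is needed.
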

The last result of this section is an application of these previous
two propositions to the case of a \emph{degenerate} twin cotorsion
pair; that is, a twin cotorsion pair $((\CS,\CT),(\CU,\CV))$ where
$(\CS,\CT)=(\CU,\CV)$. One may recover results from \cite{Nakaoka-cotorsion-pairs-I}
about cotorsion pairs on triangulated categories through the theory
of twin cotorsion pairs developed in \cite{Nakaoka-twin-cotorsion-pairs}
using such degenerate twin cotorsion pairs (see \cite[Exam. 2.10 (1)]{Nakaoka-twin-cotorsion-pairs}).
We recall some definitions, which we will also need later, from \cite{Nakaoka-cotorsion-pairs-I}
for convenience.
\begin{defn}
\label{def:heart of individual cotorsion pair}\cite{Nakaoka-cotorsion-pairs-I}
Given a cotorsion pair $(\CU,\CV)$ on a triangulated category, define
$\mathscr{W}\deff\CU\cap\CV$, $\mathscr{C}^{-}\deff\sus^{-1}\CU*\mathscr{W}$
and $\mathscr{C}^{+}\deff\mathscr{W}*\sus\CV$. The \emph{heart }of
the (individual) cotorsion pair $(\CU,\CV)$ is defined to be\emph{
}$\overline{\mathscr{H}}_{(\CU,\CV)}\deff(\mathscr{C}^{-}\cap\mbox{\ensuremath{\mathscr{C}}}^{+})/[\mathscr{W}]$.
\end{defn}
It is easy to see that $\mathscr{W}=\CW$, $\mathscr{C}^{-}=\CC^{-}$
and $\mathscr{C}^{+}=\CC^{+}$ for a degenerate twin cotorsion pair
$((\CU,\CV),(\CU,\CV))$, and that $\overline{\mathscr{H}}_{(\CU,\CV)}$
coincides with the heart $\overline{\CH}$ of the twin cotorsion pair
$((\CU,\CV),(\CU,\CV))$.
\begin{cor}
Suppose we have a degenerate twin cotorsion pair $((\CU,\CV),(\CU,\CV))$
on $\CC$ and objects $X,Y\in\CH$. Assume 
$\begin{tikzcd}[column sep=1.3em] Z\arrow{r}{}&X\arrow{r}{f}&Y\arrow{r}{}&\sus Z \end{tikzcd}$ is a triangle in $\CC$.
\emph{\begin{enumerate}[(i)]
\item \emph{If} $\overline{f}$ \emph{is epic in} $\overline{\CH}$\emph{, then} $Z\in\CC^{-}$\emph{.}
\item \emph{If} $\overline{f}$ \emph{is monic in} $\overline{\CH}$\emph{, then} $\sus Z\in\CC^{+}$\emph{.}
\end{enumerate}}\end{cor}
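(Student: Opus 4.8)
The plan is to derive both statements from Propositions~\ref{prop:twin cotorsion pair, f in CH(A,B) is epic in quotient iff Z_M_f in CW iff M_f in CU iff next map in triangle for f factors through CU} and~\ref{prop:twin cotorsion pair, f in CH(A,B) is monic in quotient iff K_L_f in CW iff L_f in CT iff previous map in triangle for f factors through CT} together with the octahedral axiom, using throughout that for the degenerate twin cotorsion pair $((\CU,\CV),(\CU,\CV))$ we have $\CS=\CU$, $\CT=\CV$, $\CW=\CU\cap\CV$, $\CC^{-}=\sus^{-1}\CU*\CW$ and $\CC^{+}=\CW*\sus\CV$. Statement (ii) will be dual to (i), so I would treat (i) in detail and indicate the dual argument.

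For (i): first rotate the given triangle to $X\xrightarrow{f}Y\xrightarrow{g}\sus Z\to\sus X$. Since $X\in\CH\subseteq\CC^{-}$, fix a triangle $\sus^{-1}S\xrightarrow{s}X\to W\to S$ with $S\in\CS=\CU$ and $W\in\CW$, and form the object $M_f$ via the triangle $\sus^{-1}S\xrightarrow{fs}Y\to M_f\to S$ as in Definition~\ref{def:m_f for twin cotorsion pair}. Because $\overline{f}$ is an epimorphism in $\overline{\CH}$, Proposition~\ref{prop:twin cotorsion pair, f in CH(A,B) is epic in quotient iff Z_M_f in CW iff M_f in CU iff next map in triangle for f factors through CU} gives $M_f\in\CU$. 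Now apply the octahedral axiom to the composition $fs = f\circ s\colon\sus^{-1}S\to Y$: the mapping cones of $s$, $f$ and $fs$ are $W$, $\sus Z$ and $M_f$ respectively, so there is a triangle $W\to M_f\to\sus Z\to\sus W$. A single rotation of this triangle exhibits $\sus Z$ as the middle term of a triangle with outer terms $M_f\in\CU$ and $\sus W\in\sus\CW$, so $\sus Z\in\CU*\sus\CW$; applying the triangulated equivalence $\sus^{-1}$ then gives $Z\in\sus^{-1}\CU*\CW=\CC^{-}$.

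For (ii): dually, since $Y\in\CH\subseteq\CC^{+}$, fix a triangle $W\to Y\xrightarrow{v}\sus V\to\sus W$ with $W\in\CW$ and $V\in\CV$, form the object $L_f$ via the triangle $V\to L_f\to X\xrightarrow{vf}\sus V$ as in Definition~\ref{def:l_f for twin cotorsion pair}, and use Proposition~\ref{prop:twin cotorsion pair, f in CH(A,B) is monic in quotient iff K_L_f in CW iff L_f in CT iff previous map in triangle for f factors through CT} to conclude $L_f\in\CT=\CV$ from the hypothesis that $\overline{f}$ is a monomorphism in $\overline{\CH}$. Applying the octahedral axiom to $X\xrightarrow{f}Y\xrightarrow{v}\sus V$ (whose relevant mapping cones are $\sus Z$, $\sus W$ and $\sus L_f$) produces a triangle $\sus Z\to\sus L_f\to\sus W\to\sus^{2}Z$; rotating and applying $\sus^{-1}$ then exhibits $\sus Z$ as a middle term of a triangle with outer terms in $\CW$ and $\sus\CV$, whence $\sus Z\in\CW*\sus\CV=\CC^{+}$.

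The step requiring the most care is the octahedral bookkeeping: correctly identifying each mapping cone in the octahedron and then choosing the rotation of the resulting triangle that places $\sus Z$ (respectively $Z$) in the middle position, since this is exactly what lets one read off membership in $\CC^{+}$ (respectively $\CC^{-}$) from the definition of the $*$-operation. One should also note that although $M_f$ and $L_f$ depend on the auxiliary triangles chosen, the only properties used are $M_f\in\CU$ and $L_f\in\CV$, which Propositions~\ref{prop:twin cotorsion pair, f in CH(A,B) is epic in quotient iff Z_M_f in CW iff M_f in CU iff next map in triangle for f factors through CU} and~\ref{prop:twin cotorsion pair, f in CH(A,B) is monic in quotient iff K_L_f in CW iff L_f in CT iff previous map in triangle for f factors through CT} guarantee for any such choice; beyond this the argument is routine manipulation of triangles.
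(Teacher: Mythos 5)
Your argument is correct and is essentially the paper's own proof: you use the epic/monic characterisations to get $M_f\in\CU$ (resp. $L_f\in\CT=\CV$), apply the octahedral axiom to the composite $fs$ (resp. $vf$) to obtain the triangle $W\to M_f\to\sus Z\to\sus W$ (resp. $\sus Z\to\sus L_f\to\sus W\to\sus^{2}Z$), and then rotate to read off membership in $\sus^{-1}\CU*\CW=\CC^{-}$ (resp. $\CW*\sus\CV=\CC^{+}$). The only difference is presentational: the paper writes out the full octahedral diagram for (i) and leaves (ii) as ``similar'', whereas you cite the cone form of the octahedron and spell out the dual case.
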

\begin{proof}
We only prove (i) as (ii) is similar. Since $\overline{f}$ is epic
in $\overline{\CH}$, by Proposition \ref{prop:twin cotorsion pair, f in CH(A,B) is epic in quotient iff Z_M_f in CW iff M_f in CU iff next map in triangle for f factors through CU}
we have that $M_{f}\in\CU=\CS$. Recall from Definition \ref{def:m_f for twin cotorsion pair} that $M_{f}$ is obtained by taking a triangle $\sus^{-1}S\overset{s}{\to}X\to W\to S$ ($S\in\CS$, $W\in\CW$), which exists as $X\in\CH\subseteq\CC^{-}$, and then completing the composition $fs$ to a triangle $\begin{tikzcd}[column sep=0.7cm]\sus^{-1}S \arrow{r}{fs}& Y\arrow{r}{m_{f}} & M_{f}\arrow{r}{} & S.\end{tikzcd}$ Then applying the octahedral axiom (TR5), we get a 
commutative diagram 
$$\begin{tikzcd}
\sus^{-1}S \arrow{r}{s}\arrow[equals]{d}& X \arrow{r}\arrow{d}{f}& W\arrow{r}\arrow{d} & S \arrow[equals]{d}\\
\sus^{-1}S \arrow{r}{fs}\arrow{d}& Y \arrow{r}{m_{f}}\arrow[equals]{d}& M_{f} \arrow{r}\arrow{d}& S \arrow{d}\\
X \arrow{r}{f}\arrow{d}& Y \arrow{r}\arrow{d}& \sus Z \arrow{r}\arrow[equals]{d}& \sus X\arrow{d} \\
W \arrow{r}& M_{f} \arrow{r}& \sus Z \arrow{r}& \sus W
\end{tikzcd}$$
where the rows are triangles. Therefore, there is a
triangle $\sus^{-1}M_{f}\to Z\to W\to M_{f}$ by (TR3), where $\sus^{-1}M_{f}\in\sus^{-1}\CU$
and $W\in\CW$, so $Z\in\sus^{-1}\CU*\CW=\sus^{-1}\CS*\CW=\CC^{-}$
as $\CU=\CS$ for a degenerate twin cotorsion pair.
\end{proof}

\section{Main result: the case when \texorpdfstring{$\overline{\CH}$}{Hbar} is quasi-abelian}\label{sec:Main-Result}

Let $\CC$ be a fixed triangulated category with suspension functor
$\sus$, and suppose $((\CS,\CT),(\CU,\CV))$ is a twin cotorsion
pair on $\CC$. No other assumptions are made on $\CC$ in this section.
We recall two key results from \cite{Nakaoka-twin-cotorsion-pairs}
concerning the factor category $\overline{\CH}=\CH/[\CW]$. First,
it is shown that $\overline{\CH}$ is semi-abelian \cite[Thm. 5.4]{Nakaoka-twin-cotorsion-pairs},
and that, if $\CU\subseteq\CS*\CT$ or $\CT\subseteq\CU*\CV$, then
$\overline{\CH}$ is also integral \cite[Thm. 6.3]{Nakaoka-twin-cotorsion-pairs}.

In this section we prove our main result: if $\CH$ is equal to $\CC^{-}$
or $\CC^{+}$, then $\overline{\CH}=\CH/[\CW]$ is quasi-abelian.
In order to prove this, we need the following lemma.
\begin{lem}
\label{lem:CA left semi-abelian category pullback square A B C D, cokernel map X to B implies A to B is a cokernel}Let
$\CA$ be a left semi-abelian category. Suppose $$\begin{tikzcd} A \arrow{r}{a}\arrow{d}[swap]{b}\PB{dr}& B\arrow{d}{c} \\ C\arrow{r}[swap]{d} & D \end{tikzcd}$$is
a pullback diagram in $\CA$. Suppose we also have morphisms $x_{B}\colon X\to B$
and $x_{C}\colon X\to C$ such that $x_{B}$ is a cokernel and $$\begin{tikzcd} X \arrow{r}{x_B}\arrow{d}[swap]{x_C}\commutes{dr}& B\arrow{d}{c} \\ C\arrow{r}[swap]{d} & D \end{tikzcd}$$commutes.
Then $a\colon A\to B$ is also a cokernel in $\CA$.\end{lem}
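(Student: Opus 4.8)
The plan is to lift $x_B$ to a morphism into the pullback $A$, to factor $a$ using left semi-abelianness, and then to exploit the universal property of $x_B$ as a cokernel in order to force the monomorphic part of that factorisation to be an isomorphism.

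First I would note that the given commutativity $cx_B = dx_C$ together with the universal property of the pullback square yields a (unique) morphism $e\colon X\to A$ with $ae = x_B$ and $be = x_C$; in fact only the relation $ae = x_B$ will be used below, the hypothesis on $x_C$ serving merely to guarantee that $e$ exists. Since $x_B$ is a cokernel it is an epimorphism, and as $ae = x_B$ is epic so is $a$.

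Next, because $\CA$ is left semi-abelian, $a$ factors as $a = ip$ with $i$ a monomorphism and $p$ a cokernel (Definition \ref{def:left and right semi-abelian}). From $a = ip$ being an epimorphism we deduce that $i$ is an epimorphism. The crucial step is then to upgrade $i$ to an isomorphism: writing $x_B = \cok h$ for some morphism $h$, the equalities $x_B h = 0$ and $x_B = i(pe)$ give $i(peh) = 0$, hence $peh = 0$ since $i$ is monic; the universal property of $x_B = \cok h$ then produces a morphism $r\colon B\to P$ with $rx_B = pe$, and substituting back gives $x_B = i(pe) = (ir)x_B$, so $ir = \mathrm{id}_B$ as $x_B$ is epic. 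Thus $i$ is a monomorphism that is split epic, hence an isomorphism, and therefore $a = ip$ is a cokernel (being the composite of an isomorphism with a cokernel).

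I expect the main obstacle to be identifying the right way to use the hypothesis that $x_B$ is a \emph{cokernel} rather than merely an epimorphism: its universal mapping property is precisely what is needed to split the monomorphism $i$ supplied by the left semi-abelian factorisation of $a$. Everything else --- constructing $e$ from the pullback, deducing epimorphy of $a$ and of $i$, and checking that the composite of an isomorphism with a cokernel is a cokernel --- is routine diagram chasing valid in any preabelian category.
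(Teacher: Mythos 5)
Your proof is correct and takes essentially the same route as the paper: both arguments use the universal property of the pullback to produce $e\colon X\to A$ with $ae=x_B$, and then deduce that $a$ is a cokernel from the fact that the composite $ae$ is one. The only difference is that at this last step the paper simply cites \cite[Prop.~2]{Rump-almost-abelian-cats}, whereas you prove that fact directly via the left semi-abelian factorisation $a=ip$ and the splitting of the monomorphism $i$; your inline argument is a correct proof of the cited result.
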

\begin{proof}
From the assumptions, we obtain the following commutative diagram
$$\begin{tikzcd} X\arrow[bend left]{drr}{x_B}\arrow[bend right]{ddr}[swap]{x_C}\arrow[dotted]{dr}{\exists !e}&&\\
&A \arrow{r}{a}\arrow{d}[swap]{b}\PB{dr}& B\arrow{d}{c}\\
&C\arrow{r}[swap]{d} & D \end{tikzcd}$$using the universal property for the pullback because $cx_{B}=dx_{C}.$
Thus, $ae=x_{B}$ is a cokernel, and hence $a$ is cokernel in the
left semi-abelian category $\CA$ by \cite[Prop. 2]{Rump-almost-abelian-cats}.
\end{proof}
Dually, we also have the following.
\begin{lem}
\label{lem:CA right semi-abelian category pushout square A B C D, kernel map C to X implies C to D is a kernel}Let
$\CA$ be a right semi-abelian category. Suppose $$\begin{tikzcd} A \arrow{r}{a}\arrow{d}[swap]{b}& B\arrow{d}{c} \\ C\arrow{r}[swap]{d} & D \PO{ul}\end{tikzcd}$$is
a pushout diagram in $\CA$. Suppose we also have morphisms $x_{B}\colon B\to X$
and $x_{C}\colon C\to X$ such that $x_{C}$ is a kernel and $$\begin{tikzcd} 
A \arrow{r}{a}\arrow{d}[swap]{b}\commutes{dr}& B\arrow{d}{x_B} \\
C \arrow{r}[swap]{x_C}& X 
\end{tikzcd}$$commutes. Then $d\colon C\to D$ is also a kernel in $\CA$.
\end{lem}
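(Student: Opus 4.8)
The plan is to mirror the proof of Lemma~\ref{lem:CA left semi-abelian category pullback square A B C D, cokernel map X to B implies A to B is a cokernel} in the dual setting, the only external input being \cite[Prop.~2]{Rump-almost-abelian-cats}. That proposition is symmetric in the left/right dichotomy: besides ``in a left semi-abelian category, if a composite $gf$ is a cokernel then $g$ is a cokernel'', it equally yields ``in a right semi-abelian category, if a composite $gf$ is a kernel then $f$ is a kernel''. Since $\CA$ is assumed right semi-abelian, it therefore suffices to produce a morphism $e\colon D\to X$ such that the composite $ed$ is a kernel; the assertion that $d$ is a kernel then follows immediately.

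To construct $e$, I would invoke the universal property of the pushout. The hypothesis that the second square commutes says precisely that $x_{B}a=x_{C}b$, so that $(x_{B},x_{C})$ is a cocone under the span $B\xleftarrow{a}A\xrightarrow{b}C$ with vertex $X$. As the square with corners $A,B,C,D$ is a pushout, there is a unique morphism $e\colon D\to X$ with $ec=x_{B}$ and $ed=x_{C}$, giving the commutative diagram
$$\begin{tikzcd}
A \arrow{r}{a}\arrow{d}[swap]{b}& B\arrow{d}{c}\arrow[bend left]{ddr}{x_B}&\\
C\arrow{r}[swap]{d}\arrow[bend right]{drr}[swap]{x_C} & D\PO{ul}\arrow[dotted]{dr}{\exists !e}&\\
&&X.
\end{tikzcd}$$
Now $ed=x_{C}$ is a kernel by assumption, and hence $d$ is a kernel in the right semi-abelian category $\CA$ by (the dual of) \cite[Prop.~2]{Rump-almost-abelian-cats}, as required.

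Because this is a purely formal dualisation of the preceding lemma, I do not expect a genuine obstacle; what little care is needed lies in two bookkeeping points. First, one must dualise \cite[Prop.~2]{Rump-almost-abelian-cats} in the right direction: the statement ``$g$ is a cokernel'' for a composite $gf$ dualises to ``$f$ is a kernel'' for $gf$, which is exactly what is applied to $ed$ above. Second, one must feed the cocone $(x_{B},x_{C})$ into the universal property of the pushout with the correct orientation (so that $e$ emanates from $D$ rather than pointing into it). Both are routine once the diagram is drawn.
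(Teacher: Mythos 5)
Your proof is correct and is precisely the dualisation the paper intends: the paper proves the left semi-abelian pullback lemma and then states this one with only the remark ``Dually, we also have the following,'' so the argument you supply --- invoke the pushout's universal property to obtain $e\colon D\to X$ with $ed=x_C$, then apply the dual of \cite[Prop.~2]{Rump-almost-abelian-cats} to conclude $d$ is a kernel --- is exactly the proof being left to the reader.
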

We will also need the following easy lemma in the proof of our main
theorem.
\begin{lem}
\label{lem:(epi) mono weak (co)kernel is (co)kernel}\emph{\cite[Lem. 2.5]{BuanMarsh-BM2}}
In any preadditive category $\CA$, we have the following. \emph{\begin{enumerate}[(i)]
\item \emph{A monomorphism that is a weak kernel is a kernel.}
\item \emph{An epimorphism that is a weak cokernel is a cokernel.}
\end{enumerate}}
\end{lem}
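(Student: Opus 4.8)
The plan is to deduce both statements directly from the universal properties involved, obtaining (ii) from (i) by a routine dualisation. Recall that a \emph{weak kernel} of a morphism $g\colon B\to C$ in $\CA$ is a morphism $k\colon K\to B$ with $gk=0$ such that every $h\colon X\to B$ satisfying $gh=0$ factors (not necessarily uniquely) as $h=kh'$ for some $h'\colon X\to K$; a \emph{kernel} of $g$ is a weak kernel for which this factorisation is always unique.

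For (i), I would take a monomorphism $k\colon K\to B$ that is a weak kernel of some $g\colon B\to C$ and observe that the monomorphism hypothesis supplies precisely the one ingredient a weak kernel lacks. Indeed, $gk=0$, and every $h\colon X\to B$ with $gh=0$ admits a factorisation $h=kh'$ with $h'\colon X\to K$; if $h=kh''$ is a second such factorisation, then $kh'=kh''$ forces $h'=h''$ since $k$ is monic. Hence $k$ satisfies the universal property of $\ker g$, i.e.\ $k$ is a kernel.

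Statement (ii) then follows formally by applying (i) in the opposite category $\CA^{\op}$, which is again preadditive: an epimorphism (respectively, weak cokernel, cokernel) in $\CA$ is precisely a monomorphism (respectively, weak kernel, kernel) in $\CA^{\op}$. Alternatively one simply repeats the argument of (i) with all arrows reversed.

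I do not anticipate any real obstacle here: the content is entirely formal, the only point worth registering being that ``monic'' (respectively, ``epic'') is exactly the extra hypothesis that upgrades a weak universal property to the corresponding strict one. One could instead just cite \cite[Lem. 2.5]{BuanMarsh-BM2} verbatim.
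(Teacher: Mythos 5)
Your argument is correct and is exactly the standard one: monicness of $k$ supplies the uniqueness that upgrades the weak universal property to the genuine one, with (ii) obtained by duality. The paper does not reprove this lemma but simply cites \cite[Lem. 2.5]{BuanMarsh-BM2}, whose proof is the same routine verification you give, so there is nothing to add.
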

We now show that, under the right conditions, $\overline{\CH}$ is
quasi-abelian. We note that $\CH=\CC^{-}\iff\CC^{-}\subseteq\CC^{+}\iff\sus^{-1}\CS*\CW\subseteq\CW*\sus\CV$,
and dually that $\CH=\CC^{+}\iff\CC^{+}\subseteq\CC^{-}\iff\CW*\sus\CV\subseteq\sus^{-1}\CS*\CW$.
\begin{thm}
\label{thm:CC tri'd cat, CH =00003D CC^- or CC^+ then stable CH is quasi-abelian}Let
$\CC$ be a triangulated category with a twin cotorsion pair $((\CS,\CT),(\CU,\CV))$.
If $\CH=\CC^{-}$ or $\CH=\CC^{+}$, then $\overline{\CH}=\CH/[\CW]$
is quasi-abelian.\end{thm}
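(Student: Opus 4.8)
The plan is to verify the two conditions of Definition \ref{def:quasi-abelian category}: that cokernels are stable under pullback and that kernels are stable under pushout in $\overline{\CH}$. Throughout I would use that $\overline{\CH}$ is preabelian and semi-abelian by \cite[Thm. 5.4]{Nakaoka-twin-cotorsion-pairs} --- hence has all kernels, cokernels, pullbacks and pushouts, and is both left and right semi-abelian --- and that every morphism of $\overline{\CH}$ is represented by a morphism of $\CC$ between objects of $\CH$. First I would reduce to one case by duality. If $((\CS,\CT),(\CU,\CV))$ is a twin cotorsion pair on $\CC$, then $((\CV,\CU),(\CT,\CS))$ is a twin cotorsion pair on $\CC^{\op}$: its associated $\CW$ is again $\CT\cap\CU=\CW$, its ``$\CC^{-}$'' and ``$\CC^{+}$'' are $\CC^{+}$ and $\CC^{-}$, and its heart is $\overline{\CH}^{\op}$. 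Under this correspondence the hypothesis $\CH=\CC^{-}$ for $\CC$ becomes the same hypothesis for $\CC^{\op}$, while ``cokernels stable under pullback in $\overline{\CH}$'' and ``kernels stable under pushout in $\overline{\CH}$'' are interchanged. So it suffices to prove: if $\CH=\CC^{-}$, then in $\overline{\CH}$ cokernels are stable under pullback and kernels are stable under pushout; applying this over $\CC^{\op}$ then settles the case $\CH=\CC^{+}$.

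Assume $\CH=\CC^{-}$, i.e. $\sus^{-1}\CS*\CW\subseteq\CW*\sus\CV$, and consider a pullback square
$$\begin{tikzcd} P \arrow{r}{\bar a}\arrow{d}[swap]{\bar b}\PB{dr} & B \arrow{d}{\bar c} \\ C \arrow{r}[swap]{\bar d} & D\end{tikzcd}$$
in $\overline{\CH}$ with $\bar d$ a cokernel; I must show $\bar a$ is a cokernel. By Lemma \ref{lem:CA left semi-abelian category pullback square A B C D, cokernel map X to B implies A to B is a cokernel} it is enough to produce a cokernel $x_{B}\colon X\to B$ and a morphism $x_{C}\colon X\to C$ in $\overline{\CH}$ with $\bar c\,x_{B}=\bar d\,x_{C}$. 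I would take $X$ to be the object of $\CH$ that computes, following Nakaoka's construction of pullbacks, the homotopy pullback in $\CC$ of representatives of $\bar c$ and $\bar d$ (the homotopy-pullback object $X'$ of $\CC$, corrected via $K_{(-)}$ so as to lie in $\CC^{-}\cap\CC^{+}=\CH$, using that $K_{X'}\in\CC^{-}$ always by \cite{Nakaoka-twin-cotorsion-pairs}), with $x_{B},x_{C}$ induced from the homotopy-pullback projections $X'\to B$ and $X'\to C$, so that $\bar c\,x_{B}=\bar d\,x_{C}$ holds in $\overline{\CH}$. It remains to see that $x_{B}$ is a cokernel: it is a weak cokernel, by the long exact sequence obtained from applying $\Hom_{\CC}(-,\,{\cdot}\,)$ to the defining triangle of the homotopy pullback; and it is an epimorphism, because $\bar d$ is epic and the factorisation through $\CU$ supplied by Proposition \ref{prop:twin cotorsion pair, f in CH(A,B) is epic in quotient iff Z_M_f in CW iff M_f in CU iff next map in triangle for f factors through CU} (applied to a triangle $C\overset{d}{\to}D\overset{g}{\to}E\to\sus C$ completing a representative $d$ of $\bar d$) propagates along the homotopy pullback to put $M_{x_{B}}$ in $\CU$. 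Hence $x_{B}$ is an epimorphism that is a weak cokernel, so a cokernel by Lemma \ref{lem:(epi) mono weak (co)kernel is (co)kernel}(ii), and Lemma \ref{lem:CA left semi-abelian category pullback square A B C D, cokernel map X to B implies A to B is a cokernel} gives that $\bar a$ is a cokernel.

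Stability of kernels under pushout, still assuming $\CH=\CC^{-}$, is proved dually: given a pushout square with $\bar a\colon A\to B$ a kernel, reduce by Lemma \ref{lem:CA right semi-abelian category pushout square A B C D, kernel map C to X implies C to D is a kernel} to constructing a kernel $x_{C}\colon C\to X$ and a morphism $x_{B}\colon B\to X$ with $x_{B}\bar a=x_{C}\bar b$, take $X$ from the corresponding homotopy pushout in $\CC$, and upgrade $x_{C}$ to a kernel using Proposition \ref{prop:twin cotorsion pair, f in CH(A,B) is monic in quotient iff K_L_f in CW iff L_f in CT iff previous map in triangle for f factors through CT} together with Lemma \ref{lem:(epi) mono weak (co)kernel is (co)kernel}(i). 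The main obstacle, in both halves, is pinning down exactly where the hypothesis $\CH=\CC^{-}$ is needed and showing that it is precisely what makes the homotopy-(co)limit computation of (co)pullbacks in $\overline{\CH}$ respect cokernels (respectively kernels) --- i.e. what turns the relevant epimorphism (respectively monomorphism) into a strict one. This is the feature absent for a general twin cotorsion pair, where \cite[Thm. 5.4]{Nakaoka-twin-cotorsion-pairs} yields only semi-abelianness; the argument there is an octahedral-axiom computation combining the two triangle decompositions coming from $(\CS,\CT)$ and $(\CU,\CV)$. Everything else --- the weak (co)kernel checks and the appeals to the auxiliary lemmas --- is then routine.
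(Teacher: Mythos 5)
Your plan as stated contains a genuine gap in the reduction step, which propagates through the whole argument. You write that under the duality $\CC \leadsto \CC^{\op}$ with twin cotorsion pair $((\CV,\CU),(\CT,\CS))$, ``its $\CC^{-}$ and $\CC^{+}$ are $\CC^{+}$ and $\CC^{-}$'' --- correct so far --- but then conclude that ``the hypothesis $\CH=\CC^{-}$ for $\CC$ becomes the same hypothesis for $\CC^{\op}$.'' This is exactly backwards: since $(\CC^{\op})^{-}=\CC^{+}$ and $(\CC^{\op})^{+}=\CC^{-}$ while $\CH$ is unchanged as a subcategory, the hypothesis $\CH=\CC^{-}$ in $\CC$ translates to $\CH=(\CC^{\op})^{+}$ in $\CC^{\op}$, not to $\CH=(\CC^{\op})^{-}$. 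So the duality gives the equivalence
\[
\text{($\CH=\CC^{-}$ implies cokernels stable under pullback)}
\iff
\text{($\CH=\CC^{+}$ implies kernels stable under pushout)},
\]
and it does \emph{not} let you conclude both ``cokernels stable under pullback'' and ``kernels stable under pushout'' from the single hypothesis $\CH=\CC^{-}$. Your final paragraph --- ``Stability of kernels under pushout, still assuming $\CH=\CC^{-}$, is proved dually'' --- therefore claims something the duality does not deliver; the kernel half under $\CH=\CC^{-}$ is genuinely an unproved assertion in your sketch.

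The ingredient you are missing, and which the paper uses, is Rump's Proposition~3 of \cite{Rump-almost-abelian-cats}: in a semi-abelian category, left quasi-abelian is equivalent to right quasi-abelian. Combined with \cite[Thm.\ 5.4]{Nakaoka-twin-cotorsion-pairs}, this means that to prove the theorem you only need to check \emph{one} of the two stability conditions (say cokernels under pullback) under $\CH=\CC^{-}$; the other side comes for free, and the case $\CH=\CC^{+}$ then does follow from the (correctly formulated) duality. With that in hand, the overall shape of your argument --- reduce via Lemma \ref{lem:CA left semi-abelian category pullback square A B C D, cokernel map X to B implies A to B is a cokernel} to producing a cokernel $\overline{x_B}\colon X\to B$ commuting with $\overline{c}$, $\overline{d}$, then verify it is an epimorphism via Proposition \ref{prop:twin cotorsion pair, f in CH(A,B) is epic in quotient iff Z_M_f in CW iff M_f in CU iff next map in triangle for f factors through CU} and a weak cokernel via the triangulated structure, then apply Lemma \ref{lem:(epi) mono weak (co)kernel is (co)kernel} --- matches the paper's. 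But your construction of $X$ is also vaguer than the paper's: the paper does not take a homotopy pullback of $c$ and $d$. Instead it uses \cite[Lem.\ 5.1]{Nakaoka-twin-cotorsion-pairs} to replace $d$ by a representative with cone in $\CS$, and then two applications of the octahedral axiom to a pair of composites involving that cone. This is what forces the triangle $X\to B\to S\to\sus X$ to have $S\in\CS\subseteq\CU$ (giving the epimorphism) and simultaneously produces the triangle $Y\to X\to\sus V$ against which one checks the weak cokernel property. A homotopy pullback of $c,d$ landing in $\CC^{-}$ and admitting those two features is plausible but not obvious; as written, ``corrected via $K_{(-)}$'' and ``propagates along the homotopy pullback'' are exactly the kind of steps that would need a careful diagram chase, and it is not evident the resulting object would have a triangle with third term in $\CS$ (as opposed to merely in $\CU$), which your epimorphism argument needs.
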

\begin{proof}
Since $\overline{\CH}$ is semi-abelian \cite[Thm. 5.4]{Nakaoka-twin-cotorsion-pairs},
we have that $\overline{\CH}$ is left quasi-abelian if and only if
$\overline{\CH}$ is right quasi-abelian \cite[Prop. 3]{Rump-almost-abelian-cats}.
Therefore, we will show that if $\CH=\CC^{-}$ then $\overline{\CH}$
is left quasi-abelian. Showing $\overline{\CH}$ is right quasi-abelian
whenever $\CH=\CC^{+}$ is similar.

Suppose $\CH=\CC^{-}$ and that we have a pullback diagram $$\begin{tikzcd}
A \arrow{r}{\overline{a}}\arrow{d}[swap]{\overline{b}}\PB{dr}& B\arrow{d}{\overline{c}} \\
C \arrow{r}[swap]{\overline{d}}& D
\end{tikzcd}$$in $\overline{\CH}$, where $\overline{d}$ is a cokernel. We need to show that $\ol{a}$ is also a cokernel.

By \cite[Lem. 5.1]{Nakaoka-twin-cotorsion-pairs},
we may assume that $d\in\Hom_{\CH}(C,D)$ is a morphism for which
there is a distinguished triangle $\begin{tikzcd}[column sep=1.2em]\sus^{-1}S\arrow{r}{}& C\arrow{r}{d}& D\arrow{r}{e}& S\end{tikzcd}$
in $\CC$ with $S\in\CS$. An application of (TR3) yields a triangle $\begin{tikzcd}[column sep=1.2em] C\arrow{r}{d}& D\arrow{r}{e}& S\arrow{r}{}&\sus C\end{tikzcd}$.

We can complete $c$ to a triangle $\begin{tikzcd}[column sep=1.2em]\sus^{-1}E\arrow{r}{}& B\arrow{r}{c}& D\arrow{r}{f}&E\end{tikzcd}$
and complete the composition $fd\colon C\to E$ to another
triangle $\begin{tikzcd}C\arrow{r}{fd}&E\arrow{r}&\sus X\arrow{r}{\sus x_{C}}&\sus C.\end{tikzcd}$
We obtain a triangle $\begin{tikzcd}D\arrow{r}{f}&E\arrow{r}&\sus B\arrow{r}{-\sus c}&\sus D\end{tikzcd}$ using (TR3). Then by the octahedral axiom (TR5), there is a commutative
diagram \begin{equation}\label{eqn:octahedral-on-fd}
\begin{tikzcd}[column sep = 1.3cm]
C \arrow{r}{d}\arrow[equals]{d}& D\arrow{r}{e}\arrow{d}{f} & S\arrow{r}\arrow{d}[swap]{-\sus g} & \sus C\arrow[equals]{d}\\
C \arrow{r}{fd}\arrow{d}[swap]{d}& E\arrow{r}\arrow[equals]{d} & \sus X \arrow{r}{\sus x_{C}}\arrow{d}[swap]{-\sus x_{B}}& \sus C \arrow{d}{\sus d}\\
D \arrow{r}{f}\arrow{d}[swap]{e}& E \arrow{r}\arrow{d}& \sus B \arrow{r}{-\sus c}\arrow[equals]{d}& \sus D \arrow{d}{\sus e}\\
S \arrow{r}[swap]{-\sus g}& \sus X \arrow{r}[swap]{-\sus x_{B}}& \sus B\arrow{r}[swap]{-\sus (ec)} & \sus S
\end{tikzcd}
\tag{$*$}
\end{equation}
in $\CC$ where the rows are triangles. There is a triangle $\begin{tikzcd}[column sep=1.6em]\sus^{-1}S\arrow{r}{g}&X\arrow{r}{x_{B}}&B\arrow{r}{ec}&S\end{tikzcd}$ using (TR3). 
Since $S\in\CS$ and $B\in\CH=\CC^{-}$, we have $X\in\CC^{-}=\CH$ by \cite[Lem. 2.12 (2)]{Nakaoka-twin-cotorsion-pairs}.

Since $B\in\CH\subseteq\CC^{+}=\CW*\sus\CV$, there is a triangle
$\begin{tikzcd}[column sep=1.3em] W\arrow{r}{h}& B\arrow{r}{i}& \sus V\arrow{r}{}& \sus W\end{tikzcd}$
with $W\in\CW$ and $V\in\CV$. Consider the composition $(ec)\circ h\colon W\to S$
and apply the octahedral axiom (TR5) to get the following commutative diagram
\begin{equation}\label{eqn:octahedral-on-(ec)h}
\begin{tikzcd}[column sep=1.3cm]
W \arrow{r}{h}\arrow[equals]{d}& B\arrow{r}{i}\arrow{d}{ec}  & \sus V\arrow{r}\arrow{d}{-\sus l} & \sus W\arrow[equals]{d}\\
W \arrow{r}{ech}\arrow{d}[swap]{h}&S\arrow{r}{-\sus j}\arrow[equals]{d}& \sus Y \arrow{r}{-\sus k}\arrow{d}{\sus y}& \sus W \arrow{d}{\sus h}\\
B \arrow{r}{ec}\arrow{d}[swap]{i}& S \arrow{r}{-\sus g}\arrow{d}{-\sus j} & \sus X \arrow{r}{-\sus x_{B}}\arrow[equals]{d}& \sus B \arrow{d}{\sus i}\\
\sus V \arrow{r}[swap]{-\sus l}      & \sus Y \arrow{r}[swap]{\sus y}& \sus X\arrow{r}[swap]{-\sus (ix_{B})} & \sus^{2}V
\end{tikzcd}
\tag{$**$}
\end{equation}
in $\CC$ with rows as triangles. Then we see that $\begin{tikzcd}[column sep=1cm]Y\arrow{r}{-y}&X\arrow{r}{ix_{B}}&\sus V\arrow{r}{-\sus l}&\sus Y\end{tikzcd}$
is also a triangle using (TR3) on the bottom triangle of \eqref{eqn:octahedral-on-(ec)h}. Moreover, this implies that $\begin{tikzcd}[column sep=0.8cm]Y\arrow{r}{y}&X\arrow{r}{ix_{B}}&\sus V\arrow{r}{\sus l}&\sus Y\end{tikzcd}$
is a triangle in $\CC$, using the triangle isomorphism $(-1_{Y},1_{X},1_{\sus V})$.

We claim that $\overline{x_{B}}\colon X\to B$ is a cokernel for $\overline{y}\colon Y\to X$
in $\overline{\CH}$. The morphism $x_{B}\in\Hom_{\CH}(X,B)$ embeds
in the triangle $\begin{tikzcd}X\arrow{r}{x_{B}}&B\arrow{r}{ec}&S\arrow{r}{-\sus g}&\sus X\end{tikzcd}$
where $S\in\CS\subseteq\CU$, and hence $\overline{x_{B}}$ is epic
in $\overline{\CH}$ by Proposition \ref{prop:twin cotorsion pair, f in CH(A,B) is epic in quotient iff Z_M_f in CW iff M_f in CU iff next map in triangle for f factors through CU}
as $ec$ factors through $\CU$. Thus, by Lemma \ref{lem:(epi) mono weak (co)kernel is (co)kernel}
it suffices to show that $\overline{x_{B}}$ is a weak cokernel for
$\overline{y}$. First, from (\ref{eqn:octahedral-on-(ec)h}) we see
that $x_{B}y=hk$ factors through $\CW$ as $W\in\CW$. Thus, in the
factor category $\overline{\CH}$ we have $\overline{x_{B}y}=0$.
Now suppose that there is some $m\colon X\to M$ such that $\overline{my}=0$
in $\overline{\CH}$. Then $my$ factors through $\CW$, so there
is a commutative diagram $$\begin{tikzcd} Y \arrow{r}{y}\arrow{d}[swap]{n} \commutes{dr}& X\arrow{d}{m} \\ W_{0} \arrow{r}[swap]{q}& M \end{tikzcd}$$
in $\CC$, where $W_{0}\in\CW$. Thus, we have a morphism of triangles $$\begin{tikzcd}Y\arrow{r}{y}\arrow{d}{n} & X\arrow{r}{ix_{B}}\arrow{d}{m}& \sus V\arrow{r}{\sus l}\arrow[dotted]{d}{\exists p} &\sus Y\arrow{d}{\sus n}\\ 
W_{0} \arrow{r}{q}& M \arrow{r}&N\arrow{r}{r}&\sus W_{0}\end{tikzcd}$$where $p$ exists using (TR4). From the commutativity of (\ref{eqn:octahedral-on-(ec)h}),
we have another morphism $$\begin{tikzcd} \sus ^{-1}S \arrow{r}{g}\arrow{d}{j}& X\arrow{r}{x_{B}}\arrow[equals]{d} & B\arrow{r}{ec}\arrow{d}{i} & S\arrow{d}{\sus j} \\
Y \arrow{r}{y}& X\arrow{r}{ix_{B}} & \sus V\arrow{r}{\sus l}& \sus Y \end{tikzcd}$$of triangles, where $(-\sus l)\circ i=(-\sus j)\circ ec$ and $(\sus y)\circ(-\sus j)=-\sus g$
yield $(\sus l)\circ i=(\sus j)\circ ec$ and $yj=g$, respectively.
Therefore, composing these two morphisms of triangles we get a commutative
diagram $$\begin{tikzcd}
\sus ^{-1}S \arrow{r}{g}\arrow{d}{nj}& X\arrow{r}{x_{B}}\arrow{d}{m} & B\arrow{r}{ec}\arrow{d}{pi} & S\arrow{d}{\sus(nj)} \\
W_{0} \arrow{r}{q}& M \arrow{r}&N\arrow{r}{r}&\sus W_{0}
\end{tikzcd}$$where the two rows are triangles. Notice that $\sus(nj)\in\Hom_{\CC}(S,\sus W_{0})=\Ext_{\CC}^{1}(S,W_{0})=0$
as $W_{0}\in\CW\subseteq\CT=\tensor[^{\perp_{1}}]{\CS}{}$. This implies
$r\circ pi$ vanishes, so there exists $\phi_{1}\colon X\to W_{0}$
and $\phi_{2}\colon B\to M$ such that $m=q\phi_{1}+\phi_{2}x_{B}$
by \cite[Lem. 3.2]{BuanMarsh-BM2}. Finally, in $\overline{\CH}$
we have $\overline{m}=\overline{q\phi_{1}}+\overline{\phi_{2}x_{B}}=\overline{\phi_{2}x_{B}}$
as $W_{0}\in\CW$ so $\overline{q\phi_{1}}=0$. This says that $\overline{x_{B}}$
is indeed a weak cokernel, and hence a cokernel, of $\overline{y}$.

In (\ref{eqn:octahedral-on-fd}) we see $(-\sus c)(-\sus x_{B})=(\sus d)(\sus x_{C}),$
so $cx_{B}=dx_{C}$ in $\CH$ and $\overline{cx_{B}}=\overline{dx_{C}}$
in the (left) semi-abelian category $\overline{\CH}$. Therefore,
since $\overline{x_{B}}$ is a cokernel, it follows from Lemma \ref{lem:CA left semi-abelian category pullback square A B C D, cokernel map X to B implies A to B is a cokernel}
that $\overline{a}\colon A\to B$ must also be a cokernel.

Hence, $\ol{\CH}$ is left quasi-abelian and thus quasi-abelian (by \cite[Prop. 3]{Rump-almost-abelian-cats}).\end{proof}
\begin{cor}
\label{cor:CC tri'd, twin cotorsion pair, if CU in CT or CT in CU then stable heart is both integral and quasi-abelian}Let
$\CC$ be a triangulated category with a twin cotorsion pair $((\CS,\CT),(\CU,\CV))$.
If $\CU\subseteq\CT$ or $\CT\subseteq\CU$, then $\overline{\CH}=\CH/[\CW]$
is quasi-abelian.\end{cor}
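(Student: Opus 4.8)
The plan is to deduce Corollary \ref{cor:CC tri'd, twin cotorsion pair, if CU in CT or CT in CU then stable heart is both integral and quasi-abelian} directly from Theorem \ref{thm:CC tri'd cat, CH =00003D CC^- or CC^+ then stable CH is quasi-abelian} by showing that each of the two hypotheses $\CU\subseteq\CT$ and $\CT\subseteq\CU$ forces $\CH=\CC^{-}$ or $\CH=\CC^{+}$ (after which one simply invokes the theorem). Recall from just before the theorem that $\CH=\CC^{-}\iff\CC^{-}\subseteq\CC^{+}\iff\sus^{-1}\CS*\CW\subseteq\CW*\sus\CV$, and dually $\CH=\CC^{+}\iff\CC^{+}\subseteq\CC^{-}$. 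So the whole task reduces to a containment check between the building blocks of $\CC^{-}$ and $\CC^{+}$.

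First I would treat the case $\CU\subseteq\CT$. Then $\CW=\CT\cap\CU=\CU$, and I claim $\CC^{+}=\CW*\sus\CV=\CU*\sus\CV=\CC$; indeed $\CU*\sus\CV=\CC$ is exactly the defining property of the cotorsion pair $(\CU,\CV)$ (Definition \ref{def:cotorsion pair in tri'd cat}). Hence $\CC^{-}\subseteq\CC=\CC^{+}$ trivially, so $\CH=\CC^{-}$ and the theorem applies. For the case $\CT\subseteq\CU$, symmetrically $\CW=\CT\cap\CU=\CT$, and $\CC^{-}=\sus^{-1}\CS*\CW=\sus^{-1}\CS*\CT$; shifting the triangle $S\to X\to\sus T\to\sus S$ witnessing $\CC=\CS*\sus\CT$ gives $\sus^{-1}\CS*\CT=\CC$, so $\CC^{+}\subseteq\CC=\CC^{-}$, i.e. $\CH=\CC^{+}$, and again the theorem applies. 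In either case $\overline{\CH}=\CH/[\CW]$ is quasi-abelian, which is what we wanted.

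I do not expect a genuine obstacle here — the corollary is a clean specialisation. The only point requiring a moment's care is the identification $\sus^{-1}\CS*\CT=\CC$ in the second case: one must check that applying $\sus^{-1}$ to a triangle $S\to X\to\sus T\to\sus S$ (with $S\in\CS$, $T\in\CT$) yields a triangle $\sus^{-1}S\to\sus^{-1}X\to T\to S$, and that as $X$ ranges over $\CC$ so does $\sus^{-1}X$, since $\sus$ is an autoequivalence; this is routine but should be spelled out so the reader sees why $\CC^{+}$ (which always sits inside $\CC$) lands inside $\CC^{-}$. Symmetrically for the first case one uses that $\sus^{-1}\CS*\CW\subseteq\CC$ automatically and $\CW*\sus\CV=\CC$. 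So the proof is essentially two lines of bookkeeping plus a citation of Theorem \ref{thm:CC tri'd cat, CH =00003D CC^- or CC^+ then stable CH is quasi-abelian}.
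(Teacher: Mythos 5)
Your proposal is correct and follows essentially the same route as the paper: one checks that $\CU\subseteq\CT$ forces $\CH=\CC^{-}$ (and $\CT\subseteq\CU$ forces $\CH=\CC^{+}$) and then invokes Theorem \ref{thm:CC tri'd cat, CH =00003D CC^- or CC^+ then stable CH is quasi-abelian}. The paper states the implication $\CU\subseteq\CT\Rightarrow\CH=\CC^{-}$ without proof, whereas you supply the (routine) verification via $\CW=\CU$ and $\CC^{+}=\CU*\sus\CV=\CC$, together with its dual; this is just an expanded version of the same argument.
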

\begin{proof}
If $\CU\subseteq\CT$ then $\CH=\CC^{-}$. Therefore, we may apply Theorem
\ref{thm:CC tri'd cat, CH =00003D CC^- or CC^+ then stable CH is quasi-abelian}
to get that $\overline{\CH}$ is quasi-abelian.
\end{proof}
Note that in this case $\overline{\CH}$ is also integral: this
follows from \cite[Thm. 6.3]{Nakaoka-twin-cotorsion-pairs}
since $\CU\subseteq\CT$ implies $\CU\subseteq\CS*\CT$. We also remark
that in \cite{LiuYNakaoka-hearts-of-twin-cotorsion-pairs-on-extriangulated-categories}
there is the corresponding result for exact categories.

\section{\label{sec:Localisation of integral heart}Localisation of an integral
heart of a twin cotorsion pair}

For this section, we fix a Krull-Schmidt, triangulated category $\CC$
and a twin cotorsion pair $((\CS,\CT),(\CU,\CV))$ on $\CC$ with
$\CT=\CU$. In this setting, we have that the heart of $(\CS,\CT)$ is $\overline{\mathscr{H}}\deff\overline{\mathscr{H}}_{(\CS,\CT)}=(\sus^{-1}\CS*\CS)/[\CS]$
and the heart of $((\CS,\CT),(\CU,\CV))$ is $\overline{\CH}=\CC/[\CW]$,
where $\CW=\CT=\CU$ (see Definitions \ref{def:heart of individual cotorsion pair}
and \ref{def:quotients of C+ C- and H}, respectively). We will show
that there is an equivalence $\overline{\mathscr{H}}\simeq\overline{\CH}_{\CR}$
(see Theorem \ref{thm:CT=00003DCU implies G-Z localisation of CH at regulars is equivalent to heart of cotorsion pair (CS,CT)}),
where $\overline{\CH}_{\CR}$ is the (Gabriel-Zisman) localisation
of $\overline{\CH}$ at the class $\CR$ of regular morphisms in $\overline{\CH}$.

Our line of proof will be as follows: first, we obtain a canonical
functor $F\colon\overline{\mathscr{H}}\to\overline{\CH}$; then, composing
with the localisation functor $L_{\CR}\colon\overline{\CH}\to\overline{\CH}_{\CR}$,
we get a functor $\overline{\mathscr{H}}\to\overline{\CH}_{\CR}$
that we show is fully faithful and dense. The proofs in this section
are inspired by methods used in \cite{BuanMarsh-BM1},
\cite{BuanMarsh-BM2} and \cite{MarshPalu-nearly-morita-equivalences-and-rigid-objects}.

For the convenience of the reader, we recall some details of the description
of the localisation of an integral category at its regular morphisms.
To this end, suppose $\CA$ is an integral category and let $\CR_{\CA}$
be the class of regular morphisms in $\CA$. In this case, $\CR_{\CA}$
admits a \emph{calculus of left fractions }(see \cite[\S I.2]{GabrielZisman-calc-of-fractions})
by \cite[Prop. 6]{Rump-almost-abelian-cats}. The objects
of the localisation $\CA_{\CR_{\CA}}$ are the objects of $\CA$.
A morphism in $\CA_{\CR_{\CA}}$ from $X$ to $Y$ is a \emph{left
fraction }of the form $$\begin{tikzcd}[row sep=0.3cm, column sep=1.5cm]
&A&\\
X\arrow{ur}{f}&&Y\arrow{ul}[swap]{r}
\end{tikzcd}$$denoted $[f,r]_{\LF}$, up to a certain equivalence (see \cite[\S I.2]{GabrielZisman-calc-of-fractions}
for more details), where $f$ is any morphism in $\CA$ and $r$ is
in $\CR_{\CA}$. The localisation functor $L_{\CR_{\CA}}\colon\CA\to\CA_{\CR_{\CA}}$
is the identity on objects and takes a morphism $f\colon X\to A$
to the left fraction $L_{\CR_{\CA}}(f)=[f]\deff[f,1_{A}]_{\LF}$.
If $r\colon Y\to A$ is in $\CR_{\CA}$, then the morphism $[r]$
in $\CA_{\CR_{\CA}}$ is invertible with inverse $[r]^{-1}$ equal to the left fraction $[1_{A},r]_{\LF}$. An exposition of the morphisms 
as right fractions may be found in \cite[\S4]{BuanMarsh-BM2}.

We note that the localisation as described above may not exist without
passing to a higher universe. However, as in \cite{MarshPalu-nearly-morita-equivalences-and-rigid-objects},
we will show that the localisations considered in the remainder of
this article are equivalent to certain subfactors of locally small
categories, and hence locally small themselves. Thus, the localisations
we are interested in are already categories and we need not pass to
a higher universe.
\begin{prop}
\label{prop:canonical functor from H (heart of (CS,CT)) to heart CH of twin cotorsion pair} 
There is an additive functor $F\colon\overline{\mathscr{H}}\to\overline{\CH}$
that is the identity on objects and, for a morphism $f\colon X\to Y$
in $\sus^{-1}\CS*\CS$, maps the coset $f+[\CS](X,Y)$ to the coset $f+[\CW](X,Y)$.\end{prop}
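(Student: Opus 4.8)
The plan is to show that $F$ is well defined on morphisms, and then that well-definedness immediately yields functoriality and additivity. Since the class $\CS$ satisfies $\CS \subseteq \CU = \CT = \CW$ (recall $\CS \subseteq \CU$ holds for any twin cotorsion pair, and here $\CT = \CU = \CW$), we have an inclusion of two-sided ideals $[\CS] \subseteq [\CW]$ in $\CC$. First I would check that the domain of $F$ makes sense: objects of $\overline{\mathscr{H}}$ are objects of $\sus^{-1}\CS * \CS$, and I must verify $\sus^{-1}\CS * \CS \subseteq \CH$. Since $\CT = \CU = \CW$, we get $\CC^{-} = \sus^{-1}\CS * \CW = \sus^{-1}\CS * \CS$ (using that $\CS \subseteq \CS * \sus\CT$ so $\CS * \CT$-type computations collapse; more carefully, one checks $\CS \subseteq \CW$ gives $\sus^{-1}\CS * \CS \subseteq \sus^{-1}\CS * \CW = \CC^{-}$, and that $\CC^{-} = \CH$ here because $\CT \subseteq \CU$ forces $\CH = \CC^{-}$ by Corollary~\ref{cor:CC tri'd, twin cotorsion pair, if CU in CT or CT in CU then stable heart is both integral and quasi-abelian}'s proof). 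So every object of $\overline{\mathscr{H}}$ is legitimately an object of $\overline{\CH}$, and $F$ is genuinely the identity on objects.

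Next, for a morphism: given $f \colon X \to Y$ in $\sus^{-1}\CS * \CS$, the coset $f + [\CS](X,Y)$ in $\Hom_{\overline{\mathscr{H}}}(X,Y)$ is sent to $f + [\CW](X,Y)$ in $\Hom_{\overline{\CH}}(X,Y)$. Well-definedness amounts to the implication: if $g \in [\CS](X,Y)$ then $g \in [\CW](X,Y)$, which is exactly the ideal inclusion $[\CS] \subseteq [\CW]$ noted above, since $\CS \subseteq \CW$ means any morphism factoring through an object of $\CS$ also factors through an object of $\CW$. Thus the assignment on cosets is independent of the representative. Functoriality is then automatic: $F(\overline{1_X}) = \overline{1_X}$ by construction, and $F(\overline{g}\,\overline{f}) = \overline{gf} = \overline{g}\,\overline{f} = F(\overline{g})F(\overline{f})$ because composition in both quotient categories is induced from composition in $\CC$. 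Additivity follows the same way: $\Hom_{\overline{\mathscr{H}}}(X,Y)$ and $\Hom_{\overline{\CH}}(X,Y)$ are quotients of $\Hom_{\CC}(X,Y)$ by subgroups, the quotient maps are group homomorphisms, and $F$ on Hom-groups is the canonical map between these quotients, hence additive; since $F$ is the identity on objects it trivially commutes with the (biproduct) additive structure.

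I do not expect a serious obstacle here — this is essentially a bookkeeping lemma whose only content is the ideal inclusion $[\CS] \subseteq [\CW]$ together with the subcategory containment $\sus^{-1}\CS * \CS \subseteq \CH$. The one point requiring a little care, and the closest thing to a genuine step, is verifying that containment of subcategories, i.e.\ confirming that $\CH$ really does equal $\CC^{-} = \sus^{-1}\CS * \CW = \sus^{-1}\CS * \CS$ in the present setting $\CT = \CU$; once that is in hand everything else is formal. Alternatively, one can sidestep the identification $\CH = \CC^-$ by simply observing $\sus^{-1}\CS * \CS \subseteq \sus^{-1}\CS * \CW = \CC^- \subseteq \CC$ and that $\CH = \CC^-$ here, so objects of $\overline{\mathscr H}$ lie in $\CH$ and $\overline\CH = \CH/[\CW] = \CC^-/[\CW]$ receives them.
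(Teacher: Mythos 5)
Your proposal is correct and takes essentially the same route as the paper: the paper factors $Q_{[\CW]}\circ\iota\colon\mathscr{H}\to\overline{\CH}$ through the additive quotient $Q_{[\CS]}$ using its universal property, which amounts to exactly the coset-level check $[\CS]\subseteq[\CW]$ (from $\CS\subseteq\CU=\CT=\CW$) that you carry out by hand, with additivity and functoriality formal in both versions. One small tidy-up: since $\CT=\CU$ here, $\CC^{-}=\sus^{-1}\CS*\CT=\CC=\CU*\sus\CV=\CC^{+}$, so $\CH=\CC$ and the containment $\sus^{-1}\CS*\CS\subseteq\CH$ is automatic; in particular your initially asserted equality $\sus^{-1}\CS*\CW=\sus^{-1}\CS*\CS$ is false (the left-hand side is all of $\CC$), but the corrected containment $\sus^{-1}\CS*\CS\subseteq\sus^{-1}\CS*\CW$ that you give is all that is needed.
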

\begin{proof}
The full subcategory $\mathscr{H}=\sus^{-1}\CS*\CS\subseteq\CC=\CH$
comes equipped with an inclusion functor $\iota\colon\mathscr{H}\to\CH$, which may be composed
with the additive quotient functor $Q_{[\CW]}\colon\CH\to\CH/[\CW]$
to get a functor $Q_{[\CW]}\circ\iota\colon\mathscr{H}\to\overline{\CH}$.
Note that this functor maps any morphism in the ideal $[\CS]$ to
$0$ as $\CS\subseteq\CU=\CW$, and therefore we get the following
commutative diagram $$\begin{tikzcd}[row sep=1.2cm,column sep=1.2cm]
\mathscr{H}=\sus^{-1}\CS * \CS \arrow{r}{\iota}\arrow{d}[swap]{Q_{[\CS]}}\commutes{dr}& \CC=\CH \arrow{d}{Q_{[\CW]}} \\
\overline{\mathscr{H}}=(\sus^{-1}\CS * \CS)/[\CS] \arrow[dotted]{r}[swap]{\exists ! F}& \CC /[\CW]=\overline{\CH}
\end{tikzcd}$$ of additive categories using the universal property of the additive
quotient $\overline{\mathscr{H}}$. Furthermore, we see that $F(X)=F(Q_{[\CS]}(X))=Q_{[\CW]}(\iota(X))=X$
and 
\[
F(f+[\CS](X,Y))=F(Q_{[\CS]}(f))=Q_{[\CW]}(\iota(f))=Q_{[\CW]}(f)=f+[\CW](X,Y).
\]

\end{proof}
The next result below is a characterisation of the regular morphisms
in $\overline{\CH}=\CC/[\CW]$, and is a special case of \cite[Lem. 4.1]{Beligiannis-rigid-objects-triangulated-subfactors-and-abelian-localizations}.
Note that $\sus^{-1}\CS$ is a contravariantly finite and \emph{rigid}
(i.e. $\Ext_{\CC}^{1}(\sus^{-1}\CS,\sus^{-1}\CS)=0$) subcategory
of $\CC$, because $\CS\subseteq\CU=\CT=\CS^{\perp_{1}}$, and that
$(\sus^{-1}\CS)^{\perp_{0}}=\CS^{\perp_{1}}=\CT=\CU=\CW$.

\begin{prop}
\label{prop:monicity, epicity, regularity mod CW}Suppose $\sus^{-1}Z\overset{h}{\longrightarrow}X\overset{f}{\longrightarrow}Y\overset{g}{\longrightarrow}Z$
is a triangle in $\CC$. Denote by $\overline{f}$ the morphism $f+[\CW](X,Y)\in\Hom_{\overline{\CH}}(X,Y)$
in $\overline{\CH}$. \emph{\begin{enumerate}[(i)]
\item \emph{The morphism} $\overline{f}$ \emph{is monic if and only if} $h$ \emph{factors through} $\CW$\emph{.}
\item \emph{The morphism} $\overline{f}$ \emph{is epic if and only if} $g$ \emph{factors through} $\CW$\emph{.}
\item \emph{The morphism} $\overline{f}$ \emph{is regular if and only if} $h$ \emph{and} $g$ \emph{factor through} $\CW$\emph{.}
\end{enumerate}}
\end{prop}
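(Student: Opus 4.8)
The plan is to deduce this result directly from the general machinery already set up for twin cotorsion pairs, by specialising it to the degenerate-like situation $\CT=\CU$ (so $\CW=\CT=\CU=\CS^{\perp_1}$). The key observation is that, since $\CT=\CU$, the twin cotorsion pair $((\CS,\CT),(\CU,\CV))$ has $\CH=\CC^{-}\cap\CC^{+}$, but more importantly the equalities $\sus^{-1}\CS*\CW = \CC^{-}$ and $\CW*\sus\CV = \CC^{+}$ simplify: because $\CU=\CT=\CW$, any cotorsion pair triangle $U\to T\to\sus V\to\sus U$ for an object $T\in\CT$ is split (as $T\in\CT=\CU$ forces the map $T\to\sus V$ to be zero, $\CV\subseteq\CT^{\perp_1}$ being automatic here from $\Ext^1_\CC(\CS,\CV)=0$ and $\CT=\CS^{\perp_1}$), so one checks $\CC^{-}=\CC^{+}=\CC$ and hence $\CH=\CC$ and $\overline{\CH}=\CC/[\CW]$, matching the description fixed at the start of the section. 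With $\CH=\CC=\CC^{-}$, Proposition~\ref{prop:twin cotorsion pair, f in CH(A,B) is epic in quotient iff Z_M_f in CW iff M_f in CU iff next map in triangle for f factors through CU} and its dual Proposition~\ref{prop:twin cotorsion pair, f in CH(A,B) is monic in quotient iff K_L_f in CW iff L_f in CT iff previous map in triangle for f factors through CT} apply to every morphism of $\CC$, not merely to morphisms in a proper subcategory $\CH$.

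So the first step is to unwind those two propositions in the present setting. Take the triangle $\sus^{-1}Z\xrightarrow{h}X\xrightarrow{f}Y\xrightarrow{g}Z$. For part~(i), I would rotate this to $\sus^{-1}Z\xrightarrow{h}X\xrightarrow{f}Y\to Z$ and recognise $h$ as the ``previous map'' in the notation of Proposition~\ref{prop:twin cotorsion pair, f in CH(A,B) is monic in quotient iff K_L_f in CW iff L_f in CT iff previous map in triangle for f factors through CT}, whose condition~(iv) reads ``$h$ factors through $\CT$''; since $\CT=\CW$ here, monicity of $\overline{f}$ is equivalent to $h$ factoring through $\CW$. Dually, for part~(ii), $g$ is the ``next map'' in Proposition~\ref{prop:twin cotorsion pair, f in CH(A,B) is epic in quotient iff Z_M_f in CW iff M_f in CU iff next map in triangle for f factors through CU}, whose condition~(iv) is ``$g$ factors through $\CU$''; and $\CU=\CW$, so epicity of $\overline{f}$ is equivalent to $g$ factoring through $\CW$. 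Part~(iii) is then immediate: $\overline{f}$ is regular precisely when it is both monic and epic, i.e. precisely when both $h$ and $g$ factor through $\CW$.

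The one genuine thing to verify carefully — and I expect this to be the only real obstacle — is that the hypotheses of Propositions~\ref{prop:twin cotorsion pair, f in CH(A,B) is epic in quotient iff Z_M_f in CW iff M_f in CU iff next map in triangle for f factors through CU} and~\ref{prop:twin cotorsion pair, f in CH(A,B) is monic in quotient iff K_L_f in CW iff L_f in CT iff previous map in triangle for f factors through CT} are actually met, namely that the relevant objects lie in $\CH$. Here this is automatic: the section's standing hypothesis $\CT=\CU$ gives $\CH=\CC$ (as indicated in the paragraph fixing notation, $\overline{\CH}=\CC/[\CW]$), so $X,Y,Z\in\CH$ with no further assumption. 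I would spell this reduction out in a sentence, cite the two propositions, and invoke the remark preceding the statement that $\sus^{-1}\CS$ is contravariantly finite and rigid with $(\sus^{-1}\CS)^{\perp_0}=\CW$ only if it is needed to identify $M_f$ and $L_f$ concretely — but in fact it is not needed for the proof itself, only the factorisation conditions~(iv) of the two propositions are used. Finally I would note that this recovers \cite[Lem. 4.1]{Beligiannis-rigid-objects-triangulated-subfactors-and-abelian-localizations} as the special case recorded before the statement.
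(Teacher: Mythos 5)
Your proof is correct, but it takes a genuinely different route from the paper: the paper offers no argument at all here, simply observing that the statement is a special case of \cite[Lem.~4.1]{Beligiannis-rigid-objects-triangulated-subfactors-and-abelian-localizations} (identifying $\CX=\sus^{-1}\CS$ and $\CX^{\perp_0}=\CW$, as set up in the preceding paragraph). You instead derive the result internally from Propositions~\ref{prop:twin cotorsion pair, f in CH(A,B) is epic in quotient iff Z_M_f in CW iff M_f in CU iff next map in triangle for f factors through CU} and~\ref{prop:twin cotorsion pair, f in CH(A,B) is monic in quotient iff K_L_f in CW iff L_f in CT iff previous map in triangle for f factors through CT}, using the section's standing hypothesis $\CT=\CU$, which collapses $\CW=\CT=\CU$ and forces $\CC^{-}=\CC^{+}=\CH=\CC$ (so that every object is in $\CH$ and every morphism is eligible). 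With that, parts~(i) and~(ii) are exactly conditions~(iv) of those two propositions read with $\CT=\CW$ and $\CU=\CW$ respectively, and (iii) is their conjunction. This is a valid, self-contained alternative; its advantage is that it stays entirely within the paper's own twin-cotorsion-pair machinery and makes the specialisation transparent, whereas the paper's citation is shorter but sends the reader to Beligiannis. A minor quibble: your parenthetical justification that the triangle $U\to T\to\sus V\to\sus U$ splits is slightly muddled — the relevant vanishing is $\Hom_{\CC}(T,\sus V)=\Ext^1_{\CC}(T,V)=0$ because $T\in\CU=\tensor[^{\perp_1}]{\CV}{}$ (Proposition~\ref{prop:elementary properties for cotorsion pair}(i)), not because of any $\Ext^1_{\CC}(\CS,\CV)$ consideration; but that aside is not actually used in the argument, since $\CC^{-}=\CC^{+}=\CC$ follows directly from $\CC=\CS*\sus\CT=\sus(\sus^{-1}\CS*\CT)$ and $\CC=\CU*\sus\CV$ once one substitutes $\CW=\CT=\CU$.
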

The following lemma is a generalisation of \cite[Lem. 3.3]{BuanMarsh-BM1}.
The proof of Buan and Marsh easily generalises, so we omit the proof
of our statement. One is able to recover the result of Buan and Marsh
by putting the appropriate restrictions on $\CC$ and by setting $((\CS,\CT),(\CU,\CV))=((\add\sus T,\CX_{T}),(\CX_{T},\tensor[]{\XT}{^{\perp_{1}}}))$, where $T$ is a rigid object in $\CC$.
\begin{lem}
\label{lem:every object of CC is isomorphic in localised CH to an object lying in sus-1CS * CS}Let
$Y$ be an arbitrary object of $\CC$. Then there exists $X\in\sus^{-1}\CS*\CS=\mathscr{H}$
and a morphism $\overline{r}\colon X\to Y$ in the class $\CR$ of regular morphisms in $\ol{\CH}$.
\end{lem}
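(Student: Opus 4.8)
The plan is to carry over the construction of Buan and Marsh (\cite[Lem. 3.3]{BuanMarsh-BM1}) to the present generality, building $X$ as a two-term ``resolution'' of $Y$ by the rigid, contravariantly finite subcategory $\sus^{-1}\CS$. Recall from the discussion preceding Proposition~\ref{prop:monicity, epicity, regularity mod CW} that $\sus^{-1}\CS$ is rigid, contravariantly finite, and satisfies $(\sus^{-1}\CS)^{\perp_{0}}=\CW$. The preliminary observation I would record is: whenever $\sus^{-1}S_{P}\overset{p}{\longrightarrow}P\overset{q}{\longrightarrow}W_{P}\longrightarrow S_{P}$ is a triangle with $S_{P}\in\CS$ in which $p$ is a right $\sus^{-1}\CS$-approximation of $P$, the object $W_{P}$ lies in $\CW$. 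Indeed, applying $\Hom_{\CC}(\sus^{-1}\CS,-)$ to this triangle and combining the fact that $p$ is an approximation with $\Ext_{\CC}^{1}(\sus^{-1}\CS,\sus^{-1}\CS)=0$ forces $\Hom_{\CC}(\sus^{-1}\CS,W_{P})=0$, i.e.\ $W_{P}\in(\sus^{-1}\CS)^{\perp_{0}}=\CW$. (Alternatively, one checks that $(\sus^{-1}\CS,\sus^{-1}\CT)$ is itself a cotorsion pair and invokes Proposition~\ref{prop:elementary properties for cotorsion pair}(ii).)

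For the construction, I would first apply this to $P=Y$: fix a triangle $\sus^{-1}S_{0}\overset{a_{0}}{\longrightarrow}Y\overset{b_{0}}{\longrightarrow}W_{0}\overset{c_{0}}{\longrightarrow}S_{0}$ with $S_{0}\in\CS$ and $W_{0}\in\CW$, and rotate it to $\sus^{-1}W_{0}\overset{h_{0}}{\longrightarrow}\sus^{-1}S_{0}\overset{a_{0}}{\longrightarrow}Y\overset{b_{0}}{\longrightarrow}W_{0}$, where $h_{0}=-\sus^{-1}c_{0}$. Next, apply the construction to $P=\sus^{-1}W_{0}$ to obtain a triangle $\sus^{-1}S_{1}\overset{a_{1}}{\longrightarrow}\sus^{-1}W_{0}\overset{\sigma}{\longrightarrow}W_{1}\longrightarrow S_{1}$ with $S_{1}\in\CS$ and $W_{1}\in\CW$. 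Finally, apply the octahedral axiom (TR5) to the composable pair $\sus^{-1}S_{1}\overset{a_{1}}{\longrightarrow}\sus^{-1}W_{0}\overset{h_{0}}{\longrightarrow}\sus^{-1}S_{0}$ and set $X\deff\cone(h_{0}a_{1})$. This produces, on one hand, the triangle $\sus^{-1}S_{1}\overset{h_{0}a_{1}}{\longrightarrow}\sus^{-1}S_{0}\longrightarrow X\longrightarrow S_{1}$, which rotates to $\sus^{-1}S_{0}\longrightarrow X\longrightarrow S_{1}\longrightarrow S_{0}$ and hence exhibits $X\in\sus^{-1}\CS*\CS=\mathscr{H}$; and, on the other hand, the octahedral triangle $W_{1}\overset{h}{\longrightarrow}X\overset{r}{\longrightarrow}Y\overset{g}{\longrightarrow}\sus W_{1}$, in which --- reading off the octahedral diagram --- the morphism $g$ equals the composite $Y\overset{b_{0}}{\longrightarrow}W_{0}\overset{\sus\sigma}{\longrightarrow}\sus W_{1}$, so that $g$ factors through $W_{0}\in\CW$.

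It then remains to check that $\overline{r}\colon X\to Y$ is a regular morphism in $\overline{\CH}=\CC/[\CW]$. I would apply Proposition~\ref{prop:monicity, epicity, regularity mod CW}(iii) to the triangle $W_{1}\overset{h}{\longrightarrow}X\overset{r}{\longrightarrow}Y\overset{g}{\longrightarrow}\sus W_{1}$, reading it as $\sus^{-1}Z\overset{h}{\longrightarrow}X\overset{r}{\longrightarrow}Y\overset{g}{\longrightarrow}Z$ with $Z=\sus W_{1}$. The morphism $h\colon W_{1}\to X$ factors through $\CW$ since its source $W_{1}$ lies in $\CW$, and the morphism $g\colon Y\to\sus W_{1}$ factors through $\CW$ since it factors through $W_{0}\in\CW$; hence $\overline{r}$ is simultaneously monic and epic in $\overline{\CH}$, that is, regular. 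Taking this $\overline{r}$ as the desired morphism completes the proof.

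The step I expect to be the main obstacle is the octahedral bookkeeping in the construction: one must arrange the suspensions so that $X=\cone(h_{0}a_{1})$ genuinely lands in $\sus^{-1}\CS*\CS$ and not in the ``off-by-one'' subcategory $\CS*\sus\CS$, while at the same time the connecting morphism of the octahedral triangle terminating at $Y$ is precisely the one that transparently factors through the $\CW$-object $W_{0}$. Everything else --- the existence of the approximations, the containment $W_{P}\in\CW$, and the closing application of Proposition~\ref{prop:monicity, epicity, regularity mod CW} --- is routine.
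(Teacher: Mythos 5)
Your proof is correct and is exactly what the paper intends: the paper omits the argument, saying that \cite[Lem. 3.3]{BuanMarsh-BM1} easily generalises, and your construction---a two-term $\sus^{-1}\CS$-approximation of $Y$ assembled via the octahedral axiom into a triangle $W_{1}\to X\to Y\to\sus W_{1}$ with $X\in\sus^{-1}\CS*\CS$ and both outer maps factoring through $\CW$, then Proposition \ref{prop:monicity, epicity, regularity mod CW}(iii)---is precisely that generalisation. The only simplification worth noting is that the two triangles you need are already supplied by the cotorsion pair $(\CS,\CT)$, since $\CC=\sus^{-1}\CS*\CT$ and $\CT=\CW$ here, so your preliminary Wakamatsu-style observation, while correct, is not strictly necessary.
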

By Proposition \ref{prop:canonical functor from H (heart of (CS,CT)) to heart CH of twin cotorsion pair},
we have an additive functor $F\colon\overline{\mathscr{H}}\to\overline{\CH}$.
Define $G\deff L_{\CR}\circ F$, where $L_{\CR}\colon\overline{\CH}\to\overline{\CH}_{\CR}$
is the additive localisation functor \cite[Rem. 4.3]{BuanMarsh-BM2},
to obtain the following commutative diagram of additive functors$$\begin{tikzcd} \overline{\mathscr{H}} \arrow{r}{F}\arrow{dr}[swap]{G}& \overline{\CH}\arrow{d}{L_{\CR}}\\
& \overline{\CH}_{\CR} \end{tikzcd}$$Note that $G(X)=X$ and $G(f+[\CS](X,Y))=[\overline{f}]=[f+[\CW](X,Y),1_{Y}]_{\LF}$.
The remainder of this section is dedicated to showing that $G$ is an equivalence
of categories.
\begin{prop}
\label{prop:G=00003DLF is dense}The functor $G\colon\overline{\mathscr{H}}\to\overline{\CH}_{\CR}$
is dense.\end{prop}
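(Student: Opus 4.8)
The plan is to show that $G$ is essentially surjective by producing, for every object of $\overline{\CH}_{\CR}$, an isomorphic object lying in the image of $G$. First I would observe that $G$ is the identity on objects: indeed $F$ is the identity on objects by Proposition \ref{prop:canonical functor from H (heart of (CS,CT)) to heart CH of twin cotorsion pair}, and the localisation functor $L_{\CR}$ is the identity on objects by construction, so $G = L_{\CR}\circ F$ is the identity on objects. In particular, since $\CT=\CU$ forces $\CC^{-}=\CC^{+}=\CC$ and hence $\CH=\CC$, the objects of $\overline{\CH}_{\CR}$ are exactly the objects of $\CC$, and $G(X)=X$ for every $X\in\sus^{-1}\CS*\CS=\mathscr{H}$.

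Next I would fix an arbitrary object $Y$ of $\overline{\CH}_{\CR}$, i.e.\ an object of $\CC$. By Lemma \ref{lem:every object of CC is isomorphic in localised CH to an object lying in sus-1CS * CS} there is an object $X\in\mathscr{H}=\sus^{-1}\CS*\CS$ together with a morphism $\overline{r}\colon X\to Y$ in $\overline{\CH}$ lying in the class $\CR$ of regular morphisms. Since $\CT=\CU$ gives in particular $\CT\subseteq\CU$, the heart $\overline{\CH}$ is integral by the remark following Corollary \ref{cor:CC tri'd, twin cotorsion pair, if CU in CT or CT in CU then stable heart is both integral and quasi-abelian}; hence $\CR$ admits a calculus of left fractions by \cite[Prop. 6]{Rump-almost-abelian-cats}, and, as recalled at the start of this section, the localisation functor $L_{\CR}$ sends every morphism of $\CR$ to an isomorphism, with $L_{\CR}(\overline{r})^{-1}$ equal to the left fraction $[1_{Y},\overline{r}]_{\LF}$. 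Consequently $L_{\CR}(\overline{r})\colon X\to Y$ is an isomorphism in $\overline{\CH}_{\CR}$, so $Y\cong X=G(X)$, which proves that $G$ is dense.

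I do not anticipate a genuine obstacle here: all of the substance is already packaged into Lemma \ref{lem:every object of CC is isomorphic in localised CH to an object lying in sus-1CS * CS} (which, via a rigid right $\sus^{-1}\CS$-approximation argument in the spirit of \cite{BuanMarsh-BM1}, supplies an object of $\mathscr{H}$ mapping regularly onto $Y$) together with the standard fact that a Gabriel--Zisman localisation inverts precisely the morphisms at which it is taken. The only point that needs a line of care is the appeal to integrality of $\overline{\CH}$, which is what guarantees the calculus of left fractions — and hence the explicit description of $\overline{\CH}_{\CR}$ and its isomorphisms — used throughout the section.
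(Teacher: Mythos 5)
Your proposal is correct and follows essentially the same route as the paper: both invoke Lemma \ref{lem:every object of CC is isomorphic in localised CH to an object lying in sus-1CS * CS} to obtain $X\in\sus^{-1}\CS*\CS$ with a regular morphism $\overline{r}\colon X\to Y$, and then use that $L_{\CR}$ inverts regular morphisms to conclude $Y\iso X=G(X)$. The extra remarks on integrality and the calculus of left fractions are fine but already covered by the discussion at the start of the section, so no changes are needed.
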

\begin{proof}
Recall that the objects of $\overline{\CH}_{\CR}$ are the objects
of $\CH=\CC$. Let $Y\in\overline{\CH}_{\CR}$ be arbitrary. Then
by Lemma \ref{lem:every object of CC is isomorphic in localised CH to an object lying in sus-1CS * CS},
there exists a morphism $r\colon X\to Y$ in $\CC$ with $X\in\sus^{-1}\CS*\CS=\mathscr{H}$,
such that $\overline{r}$ is regular in $\overline{\CH}$. Hence,
in $\overline{\CH}_{\CR}$ we have that $L_{\CR}(\overline{r})\colon X\to Y$
is an isomorphism so that $Y\iso X=L_{\CR}F(X)=G(X)$, and $G$ is
a dense functor.
\end{proof}
To show $G$ is faithful we need the following observation due to Beligiannis.
\begin{lem}
\label{lem:X in CS=00005B-1=00005D*CS, then f X to Y factors through CW implies f factors through CS}\emph{\cite[Rem. 4.3 (iii)]{Beligiannis-rigid-objects-triangulated-subfactors-and-abelian-localizations}}
Suppose $X\in\sus^{-1}\CS*\CS$ and $f\colon X\to Y$ is a morphism
in $\CC$. If $f$ factors through $\CW$, then $f$ factors through
$\CS$.
\end{lem}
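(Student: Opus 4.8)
The plan is to use the triangle that witnesses membership of $X$ in $\sus^{-1}\CS*\CS$ together with the orthogonality relation $(\sus^{-1}\CS)^{\perp_{0}}=\CW$ noted just before the statement: the slogan is that every morphism from $X$ into $\CW$ is forced to factor, along this triangle, through its $\CS$-term.

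First I would fix the data. By Definition \ref{def:CU*CV for subcats CU,CV of C}, since $X\in\sus^{-1}\CS*\CS$ there is a triangle $$\begin{tikzcd}[column sep=1.3em]\sus^{-1}S_{1}\arrow{r}{a}&X\arrow{r}{b}&S_{2}\arrow{r}{c}&S_{1}\end{tikzcd}$$ in $\CC$ with $S_{1},S_{2}\in\CS$. Now suppose $f\colon X\to Y$ factors through $\CW$, say $f=qp$ for some $p\colon X\to W$ and $q\colon W\to Y$ with $W\in\CW$. Next I would observe that $pa\colon\sus^{-1}S_{1}\to W$ is the zero morphism: since $W\in\CW=\CT=\CS^{\perp_{1}}$ (using Proposition \ref{prop:elementary properties for cotorsion pair} and the standing hypothesis $\CT=\CU$), the isomorphism $\Hom_{\CC}(\sus^{-1}S_{1},W)\iso\Ext_{\CC}^{1}(S_{1},W)$ gives $\Hom_{\CC}(\sus^{-1}S_{1},W)=0$, whence $pa=0$. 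Applying the cohomological functor $\Hom_{\CC}(-,W)$ to the triangle then yields an exact sequence $$\Hom_{\CC}(S_{2},W)\longrightarrow\Hom_{\CC}(X,W)\longrightarrow\Hom_{\CC}(\sus^{-1}S_{1},W)$$ in which $p$ is sent to $pa=0$; therefore $p=p'b$ for some $p'\colon S_{2}\to W$. Consequently $f=qp=(qp')b$ factors through the object $S_{2}\in\CS$, which is exactly what is required.

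The argument is short and I do not anticipate a genuine obstacle; the single step requiring any care is the orthogonality $\Hom_{\CC}(\sus^{-1}\CS,\CW)=0$, and even this is immediate once one unwinds that $\CT=\CU$ forces $\CW=\CT\cap\CU=\CT=\CS^{\perp_{1}}$ and uses $\Hom_{\CC}(\sus^{-1}S,-)\iso\Ext_{\CC}^{1}(S,-)$. (The Krull--Schmidt assumption on $\CC$ is not used in this lemma.) This is why we are content to attribute the statement to \cite{Beligiannis-rigid-objects-triangulated-subfactors-and-abelian-localizations} and omit the details in the main text.
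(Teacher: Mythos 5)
Your proof is correct and complete. The paper does not give its own argument for this lemma, instead citing Beligiannis \cite[Rem.~4.3(iii)]{Beligiannis-rigid-objects-triangulated-subfactors-and-abelian-localizations}; your reconstruction --- taking the triangle $\sus^{-1}S_{1}\to X\to S_{2}\to S_{1}$ witnessing $X\in\sus^{-1}\CS*\CS$, using the orthogonality $\Hom_{\CC}(\sus^{-1}\CS,\CW)=0$ (which follows from $\CW=\CT=\CS^{\perp_{1}}$ under the standing hypothesis $\CT=\CU$), and appealing to exactness of $\Hom_{\CC}(-,W)$ to push $p$ through $b$ --- is exactly the argument one expects, and your observation that the Krull--Schmidt assumption plays no role here is accurate.
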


\begin{prop}
\label{prop:G=00003DLF is faithful}The functor $G\colon\overline{\mathscr{H}}\to\overline{\CH}_{\CR}$
is faithful.\end{prop}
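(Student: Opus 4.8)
The plan is to show that if $f\colon X\to Y$ is a morphism in $\mathscr{H}=\sus^{-1}\CS*\CS$ with $G(f+[\CS](X,Y))=0$ in $\overline{\CH}_{\CR}$, then $f$ already lies in $[\CS](X,Y)$, i.e.\ $f+[\CS](X,Y)=0$ in $\overline{\mathscr{H}}$. Since $X,Y\in\overline{\mathscr{H}}$ and $G$ is additive, faithfulness is equivalent to this injectivity-on-hom-sets statement. Now $G(f+[\CS](X,Y))=[\overline{f},1_Y]_{\LF}$, so the hypothesis says this left fraction equals the zero left fraction $[0,1_Y]_{\LF}$ in $\overline{\CH}_{\CR}$. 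By the description of the equivalence relation on left fractions in a calculus of left fractions (see \cite[\S I.2]{GabrielZisman-calc-of-fractions}), there is some regular morphism $\overline{r}\colon Y\to Y'$ in $\overline{\CH}$ with $\overline{r}\,\overline{f}=\overline{r}\cdot 0=0$ in $\overline{\CH}$. Thus I am reduced to: $\overline{r}$ regular and $\overline{r}\,\overline{f}=0$ in $\overline{\CH}$ imply $\overline{f}=0$ in $\overline{\CH}$ — after which Lemma \ref{lem:X in CS=00005B-1=00005D*CS, then f X to Y factors through CW implies f factors through CS} upgrades "$f$ factors through $\CW$" to "$f$ factors through $\CS$", which is exactly $f+[\CS](X,Y)=0$.

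The key step is therefore the cancellation property: a regular morphism in $\overline{\CH}$ is (left) cancellable. But "regular" in a preabelian category means simultaneously monic and epic, and a monomorphism is by definition left-cancellable. So $\overline{r}$ monic and $\overline{r}\,\overline{f}=0=\overline{r}\cdot 0$ immediately give $\overline{f}=0$ in $\overline{\CH}$. Concretely, $\overline{f}=0$ in $\overline{\CH}=\CC/[\CW]$ says precisely that $f\colon X\to Y$ factors through $\CW$. Since $X\in\sus^{-1}\CS*\CS$, Lemma \ref{lem:X in CS=00005B-1=00005D*CS, then f X to Y factors through CW implies f factors through CS} then gives that $f$ factors through $\CS$, i.e.\ $f\in[\CS](X,Y)$. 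Hence $f+[\CS](X,Y)=0$ in $\overline{\mathscr{H}}$, and $G$ is faithful.

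The only subtlety — and the one place I would be careful — is the first reduction: translating "$[\overline{f},1_Y]_{\LF}$ equals the zero fraction" into "there exists a regular $\overline{r}$ with $\overline{r}\,\overline{f}=0$". One must invoke the precise form of the equivalence relation for left fractions and the calculus of left fractions, which holds here because $\overline{\CH}$ is integral (by Corollary \ref{cor:CC tri'd, twin cotorsion pair, if CU in CT or CT in CU then stable heart is both integral and quasi-abelian} together with the remark after it, since $\CT=\CU$), so $\CR$ admits a calculus of left fractions by \cite[Prop.\ 6]{Rump-almost-abelian-cats} as recalled above. Everything after that reduction is formal: monic gives cancellation, and Lemma \ref{lem:X in CS=00005B-1=00005D*CS, then f X to Y factors through CW implies f factors through CS} closes the argument. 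I expect no computational obstacle; the proof is short once the localisation bookkeeping is in place.
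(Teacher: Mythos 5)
The proposal is correct and takes essentially the same approach as the paper. The only difference is that you prove faithfulness of the localisation functor $L_{\CR}$ directly, via the equivalence relation in a calculus of left fractions plus left-cancellation by the monic $\overline{r}$, whereas the paper simply cites \cite[Lem.\ 4.4]{BuanMarsh-BM2} for this fact; the remaining steps --- reducing to $f$ factoring through $\CW$ and then upgrading to a factorisation through $\CS$ via Lemma \ref{lem:X in CS=00005B-1=00005D*CS, then f X to Y factors through CW implies f factors through CS} --- are identical.
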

\begin{proof}
Suppose $\overline{f}=f+[\CS](X,Y)\colon X\to Y$ is a morphism in
$\overline{\mathscr{H}}=(\sus^{-1}\CS*\CS)/[\CS]$ such that $G(\overline{f})=L_{\CR}(F(\overline{f}))=0$
in $\overline{\CH}_{\CR}=(\CC/[\CW])_{\CR}$. Then $f+[\CW](X,Y)=F(\overline{f})=0$
in $\overline{\CH}=\CC/[\CW]$ because $L_{\CR}\colon\overline{\CH}\to\overline{\CH}_{\CR}$
is faithful by \cite[Lem. 4.4]{BuanMarsh-BM2}. Hence,
$f$ factors through $\CW$ in $\CC$. Note that $X\in\sus^{-1}\CS*\CS$,
so $f$ factors through $\CS$ by Lemma \ref{lem:X in CS=00005B-1=00005D*CS, then f X to Y factors through CW implies f factors through CS}
and $\overline{f}$ is the zero morphism in $\overline{\mathscr{H}}$.
Therefore, the functor $G$ is faithful.
\end{proof}

\begin{prop}
\label{prop:G=00003DLF is full}The functor $G\colon\overline{\mathscr{H}}\to\overline{\CH}_{\CR}$
is full.\end{prop}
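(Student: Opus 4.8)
The plan is to show that $G$ is surjective on morphisms between any two objects $X,Y$ of $\overline{\mathscr{H}}$, i.e.\ between any two objects of $\mathscr{H}=\sus^{-1}\CS*\CS$; together with Propositions~\ref{prop:G=00003DLF is dense} and~\ref{prop:G=00003DLF is faithful} this completes the proof that $G$ is an equivalence. Let $\alpha\colon X\to Y$ be a morphism in $\overline{\CH}_{\CR}$. Since $\CT=\CU$, the heart $\overline{\CH}$ is integral (by \cite[Thm.~6.3]{Nakaoka-twin-cotorsion-pairs}, as $\CU=\CT\subseteq\CS*\CT$), and integrality is self-dual, so $\CR$ also admits a calculus of right fractions; hence (cf.\ \cite[\S4]{BuanMarsh-BM2}) I may write $\alpha=L_{\CR}(\overline{g})\circ L_{\CR}(\overline{r})^{-1}$ for a right fraction $X\xleftarrow{\overline{r}}A\xrightarrow{\overline{g}}Y$ in $\overline{\CH}$ with $\overline{r}\colon A\to X$ regular.

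The next step is to replace the middle object $A$ by an object of $\mathscr{H}$. By Lemma~\ref{lem:every object of CC is isomorphic in localised CH to an object lying in sus-1CS * CS} there is an object $A_{0}\in\mathscr{H}$ and a regular morphism $\overline{s}\colon A_{0}\to A$ in $\overline{\CH}$. Precomposing, both $\overline{r}\,\overline{s}\colon A_{0}\to X$ and $\overline{g}\,\overline{s}\colon A_{0}\to Y$ are morphisms of $\overline{\CH}$ between objects of $\mathscr{H}$, and $\overline{r}\,\overline{s}$ is again regular (a composite of regular morphisms is regular, since monomorphisms and epimorphisms are each closed under composition). As $L_{\CR}(\overline{s})$ is invertible, $\alpha=L_{\CR}(\overline{g}\,\overline{s})\circ L_{\CR}(\overline{r}\,\overline{s})^{-1}$.

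Now I would lift $\overline{r}\,\overline{s}$ and $\overline{g}\,\overline{s}$ through $F$. The functor $F\colon\overline{\mathscr{H}}\to\overline{\CH}$ is full: on morphisms it is the canonical surjection $\Hom_{\CC}(X,Y)/[\CS](X,Y)\to\Hom_{\CC}(X,Y)/[\CW](X,Y)$ (recall $\CS\subseteq\CW$, and $\mathscr{H}$ is a full subcategory of $\CC$). It is also faithful, because by Lemma~\ref{lem:X in CS=00005B-1=00005D*CS, then f X to Y factors through CW implies f factors through CS} a morphism with domain in $\mathscr{H}$ factors through $\CW$ precisely when it factors through $\CS$, so $[\CS]$ and $[\CW]$ coincide on $\mathscr{H}$. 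Choose morphisms $u,v$ in $\overline{\mathscr{H}}$ with $F(u)=\overline{r}\,\overline{s}$ and $F(v)=\overline{g}\,\overline{s}$. A faithful functor reflects monomorphisms and epimorphisms, so $u$ is regular in $\overline{\mathscr{H}}$; and $\overline{\mathscr{H}}$ is abelian \cite{Nakaoka-cotorsion-pairs-I}, so $u$ is an isomorphism. Therefore $\alpha=L_{\CR}F(v)\circ(L_{\CR}F(u))^{-1}=G(v)\circ G(u)^{-1}=G(v\circ u^{-1})$, with $v\circ u^{-1}\colon X\to Y$ in $\overline{\mathscr{H}}$, which shows $G$ is full.

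I expect the main obstacle to be the reduction in the second step: it is crucial that Lemma~\ref{lem:every object of CC is isomorphic in localised CH to an object lying in sus-1CS * CS} provides a regular morphism \emph{into} $A$ from an object of $\mathscr{H}$ (a resolution ``on the source side''), since this is exactly what lets the legs of a right fraction be precomposed while keeping the denominator regular --- and this is precisely why it is convenient to pass to right, rather than left, fractions here. A secondary point needing care is the faithfulness of $F$, which rests on Beligiannis's Lemma~\ref{lem:X in CS=00005B-1=00005D*CS, then f X to Y factors through CW implies f factors through CS}; note that this also re-proves Proposition~\ref{prop:G=00003DLF is faithful}, since $L_{\CR}$ is faithful.
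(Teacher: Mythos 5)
Your proof is correct, and it takes a genuinely different route from the paper's. The paper stays with the left-fraction presentation $[f,r]_{\LF}$ and constructs a lift by hand: it takes a triangle $\sus^{-1}S_{1}\to\sus^{-1}S_{0}\to X\to S_{1}$ witnessing $X\in\sus^{-1}\CS*\CS$ together with the triangle on the regular morphism $r$ (whose outer morphisms factor through $\CW$), builds a morphism of triangles via (TR4), and applies \cite[Lem.~3.2]{BuanMarsh-BM2} to write $f=ru+vc$, whence $[f,r]_{\LF}=[\overline{u}]=G(u+[\CS](X,Y))$. You instead argue structurally: integrality of $\overline{\CH}$ (valid here, as the paper notes, since $\CU=\CT\subseteq\CS*\CT$) gives by duality a calculus of right fractions for $\CR$; Lemma \ref{lem:every object of CC is isomorphic in localised CH to an object lying in sus-1CS * CS} lets you precompose both legs of a right fraction so that its source lies in $\mathscr{H}$ while the denominator stays regular; and then $F$ being full (clear from its construction) and faithful (exactly Lemma \ref{lem:X in CS=00005B-1=00005D*CS, then f X to Y factors through CW implies f factors through CS}, as in the paper's faithfulness proof), combined with Nakaoka's theorem that the heart of the single cotorsion pair $(\CS,\CT)$ is abelian \cite{Nakaoka-cotorsion-pairs-I}, turns the lifted denominator $u$ (regular because faithful functors reflect monomorphisms and epimorphisms) into an isomorphism, so the given morphism equals $G(vu^{-1})$. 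All the individual steps are sound, including the composite of regulars being regular and the rewriting of the fraction after precomposition. The trade-off: the paper's computation is self-contained---it needs only the left-fraction calculus already set up in \S\ref{sec:Localisation of integral heart} and never invokes abelianness of $\overline{\mathscr{H}}$---whereas your argument is shorter and more conceptual (close in spirit to Beligiannis's approach recalled in Remark \ref{rem:comparison to Bel13}) at the price of two extra external inputs: the abelian-heart theorem of \cite{Nakaoka-cotorsion-pairs-I} and the right-fraction calculus for regular morphisms in an integral category, i.e.\ the dual of \cite[Prop.~6]{Rump-almost-abelian-cats}, which you correctly justify by self-duality of integrality.
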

\begin{proof}
Let $X,Y$ be objects in $\overline{\mathscr{H}}$ and consider the mapping
$$\Hom_{\overline{\mathscr{H}}}(X,Y)\longrightarrow\Hom_{\overline{\CH}_{\CR}}(G(X),G(Y))=\Hom_{\overline{\CH}_{\CR}}(X,Y).$$
Let $\begin{tikzcd}[column sep=0.75cm]
X\arrow{r}{\overline{f}}&A&Y\arrow{l}[swap]{\overline{r}}
\end{tikzcd}$be an arbitrary morphism in $\Hom_{\overline{\CH}_{\CR}}(G(X),G(Y))$.
Since $r\colon Y\to A$ is a morphism in $\CC$ such that $\overline{r}$
is regular in $\overline{\CH}$, there is a triangle $\begin{tikzcd}[column sep=1em] \sus^{-1}Z \arrow{r}{s}& Y\arrow{r}{r}& A\arrow{r}{t}& Z\end{tikzcd}$
such that $s,t$ factor through $\CW$ by Proposition \ref{prop:monicity, epicity, regularity mod CW}. As $X\in\obj(\overline{\mathscr{H}})=\obj(\mathscr{H})=\obj(\sus^{-1}\CS*\CS)$,
there exists a triangle $\begin{tikzcd}[column sep=1.5em] \sus^{-1}S_1 \arrow{r}{a}&\sus^{-1}S_0 \arrow{r}{b}&X \arrow{r}{c}&S_1 \end{tikzcd}$
in $\CC$ with $S_{0},S_{1}\in\CS$. Suppose $t\colon A\to Z$ factors
as $ed$ for some $d\colon A\to T$, $e\colon T\to Z$ with $T\in\CW=\CT$.
Then the morphism $dfb\in\Hom_{\CC}(\sus^{-1}S_{0},T)\iso\Ext_{\CC}^{1}(S_{0},T)=0$
vanishes, and hence $tfb=edfb$ is the zero map too. Thus, there exists
$g\colon\sus^{-1}S_{0}\to Y$ such that $rg=fb$. Applying (TR4),
we obtain a morphism $$\begin{tikzcd}
\sus^{-1}S_1 \arrow{r}{a}\arrow[dotted]{d}{h}&\sus^{-1}S_0 \arrow{r}{b}\arrow{d}{g}&X \arrow{r}{c}\arrow{d}{f}&S_1\arrow[dotted]{d}{\sus h} \\
\sus^{-1}Z \arrow{r}{s}&Y\arrow{r}{r}&\arrow{r}{}A\arrow{r}{t}&Z
\end{tikzcd}$$of triangles in $\CC$, in which $ga=sh$ vanishes as $S_{1}\in\CS$ and $s$
factors through $\CW=\CT$. Hence, by \cite[Lem. 3.2]{BuanMarsh-BM2},
there are morphisms $u\in\Hom_{\CC}(X,Y)=\Hom_{\mathscr{H}}(X,Y)$
and $v\in\Hom_{\CC}(S_{1},A)$ such that $f=ru+vc$ in $\CC$. Therefore,
in $\overline{\CH}=\CC/[\CW]$ we have $\overline{f}=\overline{ru}+\overline{vc}=\overline{ru}$
as $\overline{v}=0$ because $S_{1}\in\CS\subseteq\CU=\CW$. This
implies that $[\overline{f}]=[\overline{ru}]=[\overline{r}][\overline{u}]$,
and hence $[u+[\CW](X,Y)]=[\overline{u}]=[\overline{r}]^{-1}[\overline{f}]=[f,r]_{\LF}$
in $\overline{\CH}_{\CR}$. Finally, we see that 
\[
[f,r]_{\LF}=[u+[\CW](X,Y)]=[F(u+[\CS](X,Y))]=L_{\CR}F(u+[\CS](X,Y))=G(u+[\CS](X,Y)),
\]
and the map $\Hom_{\overline{\mathscr{H}}}(X,Y)\to\Hom_{\overline{\CH}_{\CR}}(G(X),G(Y))$
is surjective, i.e. $G$ is a full functor. \phantom{to get qed box on new line}\end{proof}
\begin{thm}
\label{thm:CT=00003DCU implies G-Z localisation of CH at regulars is equivalent to heart of cotorsion pair (CS,CT)}Let
$\CC$ be a Krull-Schmidt, triangulated category. Suppose $((\CS,\CT),(\CU,\CV))$
is a twin cotorsion pair on $\CC$ that satisfies $\CT=\CU$. Let
$\CR$ denote the class of regular morphisms in the heart $\overline{\CH}$
of $((\CS,\CT),(\CU,\CV))$. Then the Gabriel-Zisman localisation
$\overline{\CH}_{\CR}$ is equivalent to the heart $\overline{\mathscr{H}}_{(\CS,\CT)}$
of the cotorsion pair $(\CS,\CT)$.\end{thm}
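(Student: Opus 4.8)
The plan is to produce an explicit functor $G\colon\overline{\mathscr{H}}_{(\CS,\CT)}\to\overline{\CH}_{\CR}$ and verify that it is dense, faithful and full, whence it is an equivalence. First I would unwind the definitions in the present situation where $\CT=\CU$. Since $(\CS,\CT)$ is a cotorsion pair we have $\CS*\sus\CT=\CC$, and since $\CS\subseteq\CU=\CT$ we have $\CS\cap\CT=\CS$; hence, in the notation of Definition \ref{def:heart of individual cotorsion pair}, $\mathscr{W}=\CS$, $\mathscr{C}^{-}=\sus^{-1}\CS*\CS$ and $\mathscr{C}^{+}=\CC$, so that $\overline{\mathscr{H}}_{(\CS,\CT)}=(\sus^{-1}\CS*\CS)/[\CS]=\overline{\mathscr{H}}$. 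Likewise $\CW=\CT=\CU$, so $\overline{\CH}=\CC/[\CW]$. Moreover $\overline{\CH}$ is integral by Corollary \ref{cor:CC tri'd, twin cotorsion pair, if CU in CT or CT in CU then stable heart is both integral and quasi-abelian} (together with \cite[Thm.~6.3]{Nakaoka-twin-cotorsion-pairs}, as $\CU\subseteq\CT$), so the class $\CR$ of regular morphisms in $\overline{\CH}$ admits a calculus of left fractions by \cite[Prop.~6]{Rump-almost-abelian-cats}, and the localisation functor $L_{\CR}\colon\overline{\CH}\to\overline{\CH}_{\CR}$ makes sense.

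Next I would take the additive functor $F\colon\overline{\mathscr{H}}\to\overline{\CH}$ of Proposition \ref{prop:canonical functor from H (heart of (CS,CT)) to heart CH of twin cotorsion pair}, set $G\deff L_{\CR}\circ F$, and then simply assemble the three preparatory propositions: $G$ is dense by Proposition \ref{prop:G=00003DLF is dense}, $G$ is faithful by Proposition \ref{prop:G=00003DLF is faithful}, and $G$ is full by Proposition \ref{prop:G=00003DLF is full}. A dense, fully faithful functor is an equivalence of categories, so $\overline{\CH}_{\CR}\simeq\overline{\mathscr{H}}_{(\CS,\CT)}$, as claimed. As a by-product, since $\overline{\mathscr{H}}_{(\CS,\CT)}$ is a subfactor of $\CC$ and hence locally small, this equivalence shows that $\overline{\CH}_{\CR}$ is locally small, so no enlargement of universe is needed to form it.

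For the theorem itself there is therefore essentially no obstacle---it is a clean assembly---and all the genuine content lies in the three propositions being combined; the deepest of these is fullness, Proposition \ref{prop:G=00003DLF is full}. There, given a left fraction $[f,r]_{\LF}$ representing a morphism $X\to Y$ in $\overline{\CH}_{\CR}$ with $X\in\sus^{-1}\CS*\CS$, one chooses a triangle realising $X$ as an object of $\sus^{-1}\CS*\CS$ and a triangle realising the regular morphism $r$ via Proposition \ref{prop:monicity, epicity, regularity mod CW}, kills the relevant $\Ext_{\CC}^{1}$-obstructions using $\CS\subseteq{}^{\perp_{1}}\CW$ and $\CW\subseteq{}^{\perp_{1}}\CS$, applies (TR4), and finishes with the lifting lemma \cite[Lem.~3.2]{BuanMarsh-BM2} to write $f=ru+vc$ with $u$ a genuine morphism in $\mathscr{H}$, after which $[f,r]_{\LF}=G(u+[\CS](X,Y))$. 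Faithfulness rests on Beligiannis's observation, Lemma \ref{lem:X in CS=00005B-1=00005D*CS, then f X to Y factors through CW implies f factors through CS}, that a morphism out of an object of $\sus^{-1}\CS*\CS$ factoring through $\CW$ already factors through $\CS$, together with the faithfulness of $L_{\CR}$; and density rests on Lemma \ref{lem:every object of CC is isomorphic in localised CH to an object lying in sus-1CS * CS}, that every object of $\CC$ is isomorphic in $\overline{\CH}_{\CR}$ to one in $\sus^{-1}\CS*\CS$.
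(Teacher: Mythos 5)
Your proposal is correct and follows essentially the same route as the paper: it defines $G=L_{\CR}\circ F$ using the canonical functor of Proposition \ref{prop:canonical functor from H (heart of (CS,CT)) to heart CH of twin cotorsion pair} and concludes by combining Propositions \ref{prop:G=00003DLF is dense}, \ref{prop:G=00003DLF is faithful} and \ref{prop:G=00003DLF is full}, exactly as in the paper's proof. Your preliminary identifications ($\mathscr{W}=\CS$, $\overline{\mathscr{H}}_{(\CS,\CT)}=(\sus^{-1}\CS*\CS)/[\CS]$, $\overline{\CH}=\CC/[\CW]$) and your sketch of the fullness argument also match the paper's treatment.
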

\begin{proof}
It is well-known that a fully faithful, dense functor is an equivalence. Hence, the functor $G=L_{\CR}\circ F$ gives an equivalence $\overline{\mathscr{H}}_{(\CS,\CT)}\overset{\simeq}{\longrightarrow}\overline{\CH}_{\CR}$
using Propositions \ref{prop:G=00003DLF is dense}, \ref{prop:G=00003DLF is faithful}
and \ref{prop:G=00003DLF is full} above.\end{proof}
\begin{rem}
Notice that although Theorem \ref{thm:CT=00003DCU implies G-Z localisation of CH at regulars is equivalent to heart of cotorsion pair (CS,CT)}
looks somewhat independent of the cotorsion pair $(\CU,\CV)$, we
have that the pair $(\CS,\CT)$ determines $(\CU,\CV)$, and vice
versa, using Proposition \ref{prop:elementary properties for cotorsion pair}
and that $\CT=\CU$.
\end{rem}

\begin{rem}
\label{rem:comparison to Bel13}We show now that the conclusion of
Theorem \ref{thm:CT=00003DCU implies G-Z localisation of CH at regulars is equivalent to heart of cotorsion pair (CS,CT)}
also follows from results of Beligiannis. Let $\CC$ be a category
as in the statement of Theorem \ref{thm:CT=00003DCU implies G-Z localisation of CH at regulars is equivalent to heart of cotorsion pair (CS,CT)}
above. Let $\CX$ be a contravariantly finite and rigid\emph{ }subcategory
of $\CC$. Suppose further that $\CX^{\perp_{0}}$ is contravariantly
finite. Beligiannis shows (see Remark 4.3, Lemma 4.4 and Theorem 4.6
in \cite{Beligiannis-rigid-objects-triangulated-subfactors-and-abelian-localizations})
that there are equivalences 
\[
(\CX*\sus\CX)/[\sus\CX]=(\CX*\sus\CX)/[\CX^{\perp_{0}}]\overset{\simeq}{\longrightarrow}\mod\CX\overset{\simeq}{\longleftarrow}(\CC/[\CX^{\perp_{0}}])_{\CR},
\]
where $\mod\CX$ is the category of coherent functors over $\CX$
(see \cite{Auslander-coherent-functors}) and $\CR$ is the class
of regular morphisms in the category $\CC/[\CX^{\perp_{0}}]$. In
the situation of Theorem \ref{thm:CT=00003DCU implies G-Z localisation of CH at regulars is equivalent to heart of cotorsion pair (CS,CT)},
we have that $\sus^{-1}\CS$ is a contravariantly finite, rigid subcategory,
and that $(\sus^{-1}\CS)^{\perp_{0}}=\CW=\CU$ is also contravariantly
finite; see the discussion above Proposition \ref{prop:monicity, epicity, regularity mod CW}
for more details. Therefore, with $\CX=\sus^{-1}\CS$ one obtains
\[
(\CX*\sus\CX)/[\sus\CX]=(\sus^{-1}\CS*\CS)/[\CS]=\overline{\mathscr{H}}_{(\CS,\CT)}
\]
and 
\[
(\CC/[\CX^{\perp_{0}}])_{\CR}=(\CC/[\CW])_{\CR}=\overline{\CH}_{\CR}.
\]
Hence, one may deduce that $\overline{\mathscr{H}}_{(\CS,\CT)}$ and
$\overline{\CH}_{\CR}$ are equivalent from the results in \cite{Beligiannis-rigid-objects-triangulated-subfactors-and-abelian-localizations}.
However, the proof method is different: Beligiannis makes use of adjoint
functors and obtains a functor $(\CC/[\CX^{\perp_{0}}])_{\CR}\to(\CX*\sus\CX)/[\CX^{\perp_{0}}]$,
which is stated to be an equivalence, using the universal property
of the localisation $(\CC/[\CX^{\perp_{0}}])_{\CR}$; on the other
hand, we construct an explicit equivalence in the other direction.
\end{rem}

\section{\label{sec:application to cluster category}An application to the
cluster category}

In this section, we assume $k$ is a field and that, unless otherwise
stated, $\CC$ is a $\Hom$-finite, Krull-Schmidt, triangulated $k$-category
with a Serre functor $\nu$. As usual, we will denote the suspension
functor of $\CC$ by $\Sigma$. For the convenience of the reader,
we recall the definition of a Serre functor below.
\begin{defn}
\label{def:Serre functor}\cite[\S2.6]{Keller-cy-tri-cats}
A \emph{Serre functor} of a $\Hom$-finite, triangulated $k$-category
$\CC$ is a triangle autoequivalence $\nu\colon\CC\to\CC$ such that
for any $X,Y\in\CC$ we have
\[
\Hom_{\CC}(X,Y)\iso D\Hom_{\CC}(Y,\nu X),
\]
which is functorial in both arguments and where $D(-)\deff\Hom_{\mod k}(-,k)$. In this case, we say $\CC$ \emph{has Serre duality}.
\end{defn}
For the remainder of this section, we also assume that $R$ is a fixed
\emph{rigid} object of $\CC$ (that is, $\Ext_{\CC}^{1}(R,R)=0$).
For an object $X$ in $\CC$, we denote by $\add X$ the full, additive subcategory of $\CC$ consisting
of objects that are isomorphic to direct summands of finite direct
sums of copies of $X$, and by $\CX_{X}$ the full, additive subcategory
of $\CC$ that consists of objects $Y$ such that $\Hom_{\CC}(X,Y)=0$.
Hence, $\CX_{X}$ is equal to $(\add X)^{\perp_{0}}$, or $(\add\sus X)^{\perp_{1}}$
as in \cite{BuanMarsh-BM2}. The next proposition collects
some easily verifiable observations, some of which may be found in
\cite{BuanMarsh-BM1}.
\begin{prop}
\label{prop:R rigid implies addR and X_R extension-closed and closed under direct summands and isos and X_R=00003Dadd R=00005B1=00005D Ext-perp}For
any rigid object $R'\in\CC$, the subcategories \emph{$\add R'$}
and $\CX_{R'}$ are closed under isomorphisms and direct summands.
Moreover, these subcategories are also extension-closed.
\end{prop}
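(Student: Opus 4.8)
The plan is to treat the two subcategories $\add R'$ and $\CX_{R'}$ separately, and for each to check in turn that it is closed under isomorphisms, closed under direct summands, and extension-closed. Every one of these reduces to a short manipulation, using only the rigidity hypothesis $\Ext^{1}_{\CC}(R',R')=0$ and the fact that $\Hom_{\CC}(R',-)$ is a cohomological functor.

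For $\add R'$ I would start from the description of $\add$ recalled just before Proposition \ref{prop:elementary properties for cotorsion pair}: an object lies in $\add R'$ exactly when it is isomorphic to a direct summand of a finite direct sum of copies of $R'$. Closure under isomorphisms is then built into the definition, and closure under direct summands follows since a direct summand of a direct summand of $(R')^{\oplus n}$ is again a direct summand of $(R')^{\oplus n}$. For extension-closedness I would first promote the rigidity hypothesis to $\Ext^{1}_{\CC}(\add R',\add R')=0$: since $\Ext^{1}_{\CC}(-,-)=\Hom_{\CC}(-,\sus -)$ is additive in each variable it vanishes on any pair of finite direct sums of copies of $R'$, and a direct summand of a group that is $0$ is again $0$. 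Then, given a triangle $X\to Y\to Z\to\sus X$ with $X,Z\in\add R'$, the connecting morphism lies in $\Hom_{\CC}(Z,\sus X)=\Ext^{1}_{\CC}(Z,X)=0$, so the triangle splits and $Y\iso X\oplus Z\in\add R'$.

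For $\CX_{R'}$ I would use the identification $\CX_{R'}=(\add R')^{\perp_{0}}$ noted in the text, i.e.\ $\CX_{R'}$ consists of the objects $Y$ with $\Hom_{\CC}(R',Y)=0$. Closure under isomorphisms is immediate from functoriality of $\Hom_{\CC}(R',-)$, and closure under direct summands follows from the decomposition $\Hom_{\CC}(R',Y_{1}\oplus Y_{2})\iso\Hom_{\CC}(R',Y_{1})\oplus\Hom_{\CC}(R',Y_{2})$, which is zero iff both summands are. For extension-closedness I would apply the cohomological functor $\Hom_{\CC}(R',-)$ to a triangle $X\to Y\to Z\to\sus X$ with $X,Z\in\CX_{R'}$, obtaining the exact sequence $\Hom_{\CC}(R',X)\to\Hom_{\CC}(R',Y)\to\Hom_{\CC}(R',Z)$ in which both outer terms vanish, whence $Y\in\CX_{R'}$. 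There is no genuine obstacle in this proposition; the statement is elementary, and the only step that deserves a moment's attention is the upgrade of $\Ext^{1}_{\CC}(R',R')=0$ to $\Ext^{1}_{\CC}(\add R',\add R')=0$, since that is precisely what forces the relevant triangles in the $\add R'$ case to split.
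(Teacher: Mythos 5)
Your proof is correct. The paper does not actually spell out an argument for this proposition --- it simply calls the assertions ``easily verifiable observations'' and defers to \cite{BuanMarsh-BM1} for some of them --- so there is no line-by-line comparison to make, but the route you take is exactly the expected one. The one point genuinely worth recording is the one you flag yourself: rigidity of the single object $R'$ upgrades to $\Ext^{1}_{\CC}(\add R',\add R')=0$ by additivity of $\Hom_{\CC}(-,\sus-)$ in each variable together with the fact that a direct summand of the zero group is zero, and this is what makes the triangle with endpoints in $\add R'$ split. The $\CX_{R'}$ half is the standard application of the cohomological functor $\Hom_{\CC}(R',-)$. Nothing is missing.
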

 
\begin{rem}
\label{rem:in a KS cat, approximation exists implies minimal approx exists}Since
$\CC$ is a Krull-Schmidt category, if an object $X$ of $\CC$ has
a right $\CX$-approximation, for some subcategory $\CX\subseteq\CC$,
then $X$ has a minimal right $\CX$-approximation (see Definition
\ref{def:right left minimal morphism}) by \cite[Cor. 1.4]{KrauseSaorin-minimal-approximations-of-modules}.
Dually, the existence of a left $\CX$-approximation implies the existence
of a minimal such one under our assumptions.
\end{rem}
The next result is stated in \cite{Nakaoka-twin-cotorsion-pairs},
but we include the details to illustrate where the various assumptions on $\CC$ are needed. See
also \cite{BuanMarsh-BM2}.
\begin{lem}
\emph{\label{lem:Nakaoka twin cotorsion pairs Example 2.10 (2) twin cot pair giving cluster cat mod X_R}\cite[Exam. 2.10 (2)]{Nakaoka-twin-cotorsion-pairs}}
The pair \emph{$((\add\sus R,\CX_{R}),(\XR,\tensor[]{\XR}{^{\perp_1}}))$}
is a twin cotorsion pair with heart $\overline{\CH}=\CC/[\CX_{R}]$.\end{lem}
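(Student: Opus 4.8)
The plan is to verify in turn: (1) $(\add\sus R,\CX_{R})$ is a cotorsion pair; (2) $(\CX_{R},\CX_{R}^{\perp_{1}})$ is a cotorsion pair; (3) the ordered pair of these is a twin cotorsion pair, i.e. $\add\sus R\subseteq\CX_{R}$; and (4) the resulting heart is $\CC/[\CX_{R}]$. Two consequences of the standing hypotheses will be used throughout. First, since $\CC$ is $\Hom$-finite over $k$ and $R,\nu R$ are single objects, the subcategories $\add R$ and $\add\nu R$ are functorially finite in $\CC$. Second, since $\nu$ is a triangle autoequivalence, $\nu R$ is again rigid, and Serre duality $\Hom_{\CC}(R,X)\iso D\Hom_{\CC}(X,\nu R)$ gives the identification $\CX_{R}=(\add R)^{\perp_{0}}={}^{\perp_{0}}(\add\nu R)$.

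For (1): the vanishing $\Ext_{\CC}^{1}(\add\sus R,\CX_{R})=0$ is immediate, as $\Ext_{\CC}^{1}(\sus R,X)\iso\Hom_{\CC}(R,X)$ is zero for $X\in\CX_{R}$. For the decomposition I would first establish $\CC=\add R*\CX_{R}$: given $X\in\CC$, take a right $\add R$-approximation $R^{n}\to X$, complete it to a triangle $R^{n}\to X\to X'\to\sus R^{n}$, and apply $\Hom_{\CC}(R,-)$; surjectivity of $\Hom_{\CC}(R,R^{n})\to\Hom_{\CC}(R,X)$ (approximation) and the vanishing $\Hom_{\CC}(R,\sus R^{n})=\Ext_{\CC}^{1}(R,R^{n})=0$ (rigidity) force $\Hom_{\CC}(R,X')=0$, i.e. $X'\in\CX_{R}$. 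Applying $\sus$ then gives $\CC=\sus\CC=\add\sus R*\sus\CX_{R}$. Together with the fact that $\add\sus R$ and $\CX_{R}$ are closed under isomorphisms and direct summands (the latter by Proposition~\ref{prop:R rigid implies addR and X_R extension-closed and closed under direct summands and isos and X_R=00003Dadd R=00005B1=00005D Ext-perp}), this shows $(\add\sus R,\CX_{R})$ is a cotorsion pair.

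Step (2) is the heart of the matter, and where Serre duality is genuinely needed. The vanishing $\Ext_{\CC}^{1}(\CX_{R},\CX_{R}^{\perp_{1}})=0$ holds by definition of $\CX_{R}^{\perp_{1}}$, so the real content is $\CC=\CX_{R}*\sus\CX_{R}^{\perp_{1}}$. Dually to step (1): given $X\in\CC$, take a left $\add\nu R$-approximation $X\to\nu R^{n}$, complete it to a triangle $K\to X\to\nu R^{n}\to\sus K$, and apply $\Hom_{\CC}(-,\nu R)$; surjectivity of $\Hom_{\CC}(\nu R^{n},\nu R)\to\Hom_{\CC}(X,\nu R)$ together with $\Ext_{\CC}^{1}(\nu R^{n},\nu R)\iso\Ext_{\CC}^{1}(R^{n},R)=0$ forces $\Hom_{\CC}(K,\nu R)=0$, i.e. $K\in{}^{\perp_{0}}(\add\nu R)=\CX_{R}$. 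Moreover $\sus^{-1}\nu R^{n}\in\CX_{R}^{\perp_{1}}$: for $X''\in\CX_{R}={}^{\perp_{0}}(\add\nu R)$ one has $\Ext_{\CC}^{1}(X'',\sus^{-1}\nu R^{n})\iso\Hom_{\CC}(X'',\nu R^{n})=0$. Rewriting the triangle as $K\to X\to\sus(\sus^{-1}\nu R^{n})\to\sus K$ thus exhibits $X\in\CX_{R}*\sus\CX_{R}^{\perp_{1}}$; since $\CX_{R}$ and $\CX_{R}^{\perp_{1}}$ are additive and closed under isomorphisms and summands, $(\CX_{R},\CX_{R}^{\perp_{1}})$ is a cotorsion pair. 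For (3) it remains to note $\add\sus R\subseteq\CX_{R}$, which holds because $\Hom_{\CC}(R,\sus R)=\Ext_{\CC}^{1}(R,R)=0$ and $\CX_{R}$ is closed under finite direct sums and summands.

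Finally, (4) is read off from what has already been proved. Here $\CW=\CT\cap\CU=\CX_{R}\cap\CX_{R}=\CX_{R}$, so $\CC^{-}=\sus^{-1}\CS*\CW=\add R*\CX_{R}=\CC$ and $\CC^{+}=\CW*\sus\CV=\CX_{R}*\sus\CX_{R}^{\perp_{1}}=\CC$ by steps (1) and (2); hence $\CH=\CC^{-}\cap\CC^{+}=\CC$ and $\overline{\CH}=\CH/[\CW]=\CC/[\CX_{R}]$. The only step that is not routine bookkeeping with approximations and rigidity is the construction of the cotorsion-pair triangle in step (2); I expect that to be the main obstacle. (Note that the Krull--Schmidt hypothesis is not actually needed for this lemma, although it ensures that all the approximations above can be chosen minimal.)
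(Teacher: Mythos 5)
Your argument is correct, and it reaches the lemma by a genuinely different technical route from the paper. The paper verifies the two decompositions $\CC=\add\sus R*\sus\CX_{R}$ and $\CC=\CX_{R}*\sus\CX_{R}^{\perp_{1}}$ by taking \emph{minimal} right $\add\sus R$- and left $\add\nu R$-approximations (minimality being supplied by the Krull--Schmidt hypothesis) and then invoking the triangulated Wakamatsu lemma, which also requires extension-closure of these subcategories; you instead take arbitrary approximations and run the long exact $\Hom$-sequence, using rigidity of $R$ (transported through $\nu$ for the second pair) to kill the term $\Hom_{\CC}(R,\sus R^{n})$, respectively $\Hom_{\CC}(\sus^{-1}\nu R^{n},\nu R)\iso\Ext^{1}_{\CC}(R^{n},R)$, so that neither minimality nor Wakamatsu is needed. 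This is why your closing remark is justified: your proof really does avoid the Krull--Schmidt hypothesis, whereas the paper's proof as written uses it (only) to get minimal approximations. A second difference concerns the pair $(\CX_{R},\CX_{R}^{\perp_{1}})$: the paper first shows that $(\CX_{R},\add\sus^{-1}\nu R)$ is a cotorsion pair and then identifies $\CX_{R}^{\perp_{1}}=\add\sus^{-1}\nu R$ via Proposition \ref{prop:elementary properties for cotorsion pair}, which has the side benefit of an explicit description of $\CV$; you verify $\CC=\CX_{R}*\sus\CX_{R}^{\perp_{1}}$ directly by checking $\sus^{-1}\nu R^{n}\in\CX_{R}^{\perp_{1}}$ through Serre duality, which is more economical but does not record that description. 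Both proofs use Serre duality in exactly the same place, namely the identification $\CX_{R}={}^{\perp_{0}}(\add\nu R)$, and your computation of the heart ($\CW=\CX_{R}$, $\CC^{-}=\CC^{+}=\CC$, hence $\overline{\CH}=\CC/[\CX_{R}]$) agrees with the paper's.
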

\begin{proof}
First, we show that $(\add\sus R,\CX_{R})$ is a cotorsion pair on
$\CC$. Since $\CC$ is assumed to be $\Hom$-finite, we have that
$\add\sus R$ is contravariantly finite, so for any $X\in\CC$ there
exists a triangle $\begin{tikzcd}[column sep=1.3em] \sus R_0 \arrow{r}{f}& X\arrow{r}{}& Y \arrow{r}{}& \sus^2 R_0, \end{tikzcd}$ where
$f\colon\sus R_{0}\to X$ is a minimal right $\add\sus R$-approximation
of $X$ because $\CC$ is also Krull-Schmidt. Since $\add\sus R$
is extension-closed (see Proposition \ref{prop:R rigid implies addR and X_R extension-closed and closed under direct summands and isos and X_R=00003Dadd R=00005B1=00005D Ext-perp}),
by Lemma \ref{lem:triangulated wakamatsu} we have that $Y\in(\add\sus R)^{\perp_{0}}=\sus\CX_{R}$.
Therefore, $\CC=\add\sus R*\sus\CX_{R}$. We also have $\Ext_{\CC}^{1}(\sus R,\CX_{R})=\Hom_{\CC}(\sus R,\sus\CX_{R})\iso\Hom_{\CC}(R,\CX_{R})=0$.
Comparing with Definition \ref{def:cotorsion pair in tri'd cat},
we see that $(\CS,\CT)\deff(\add\sus R,\CX_{R})$ is indeed a cotorsion
pair.

To see that $(\CU,\CV)\deff(\CX_{R},\tensor[]{\XR}{^{\perp_{1}}})$
is a cotorsion pair, take a minimal left $\add\nu R$-approximation
$r\colon X\to\nu R_{1}$ of $X$ and complete it to a triangle $\begin{tikzcd}[column sep=1.3em]Z \arrow{r}{s}& X\arrow{r}{r} & \nu R_1\arrow{r}{} & \sus Z.\end{tikzcd}$
Then by Lemma \ref{lem:triangulated wakamatsu} again, we have $Z\in\CX_{R}$
and so $\CC=\CX_{R}*\sus(\sus^{-1}\add\nu R)$. In addition, 
\[
\Ext_{\CC}^{1}(\CX_{R},\sus^{-1}\add\nu R)=\Hom_{\CC}(\CX_{R},\add\nu R)\iso\Hom_{\CC}(\add R,\CX_{R})=0,
\]
so $(\CX_{R},\add\sus^{-1}\nu R)$ is a cotorsion pair. Therefore,
by Proposition \ref{prop:elementary properties for cotorsion pair},
we see that $(\CU,\CV)=(\CX_{R},\tensor[]{\XR}{^{\perp_{1}}})=(\CX_{R},\add\sus^{-1}\nu R)$
is a cotorsion pair.

Furthermore, we have $\Hom_{\CC}(R,\sus R)=\Ext_{\CC}^{1}(R,R)=0$
as $R$ is rigid, so $\CS=\add\sus R\subseteq\CX_{R}=\CU$. Hence,
$((\CS,\CT),(\CU,\CV))=((\add\sus R,\CX_{R}),(\XR,\tensor[]{\XR}{^{\perp_1}}))$
is a twin cotorsion pair on $\CC$ (see Definition \ref{def:twin cotorsion pair in tri'd cat}).
In particular, $\CT=\CX_{R}=\CU$, so $\CW=\CT=\CU=\CX_{R}$, and 
\[
\CC^{-}=\sus^{-1}\CS*\CW=\sus^{-1}\CS*\CT=\CC=\CU*\sus\CV=\CW*\sus\CV=\CC^{+}.
\]
Therefore, $\CH=\CC^{-}\cap\CC^{+}=\CC$ and the heart associated
to $((\CS,\CT),(\CU,\CV))$ is $\overline{\CH}=\CH/[\CW]=\CC/[\CX_{R}]$.\end{proof}
\begin{thm}
Suppose $\CC$ is a $\Hom$-finite, Krull-Schmidt, triangulated $k$-category that has 
Serre duality, and assume $R$ is a rigid object of $\CC$.
Then $\CC/[\CX_{R}]$ is quasi-abelian.\end{thm}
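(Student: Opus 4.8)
The plan is to deduce this immediately from the machinery already in place. First I would invoke Lemma~\ref{lem:Nakaoka twin cotorsion pairs Example 2.10 (2) twin cot pair giving cluster cat mod X_R}, which tells us that $((\add\sus R,\CX_{R}),(\XR,\tensor[]{\XR}{^{\perp_1}}))$ is a twin cotorsion pair on $\CC$ whose heart is precisely $\overline{\CH}=\CC/[\CX_{R}]$. The only structural input we need from that lemma's proof is the identification $\CT=\CX_{R}=\CU$; in particular $\CT\subseteq\CU$ (indeed $\CT=\CU$), so $\CW=\CT=\CU=\CX_{R}$.

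Next, since the hypothesis $\CT\subseteq\CU$ of Corollary~\ref{cor:CC tri'd, twin cotorsion pair, if CU in CT or CT in CU then stable heart is both integral and quasi-abelian} is satisfied, that corollary applies directly and yields that $\overline{\CH}=\CH/[\CW]=\CC/[\CX_{R}]$ is quasi-abelian. Concretely, $\CT\subseteq\CU$ forces $\CC^{+}\subseteq\CC^{-}$, hence $\CH=\CC^{+}$, and then Theorem~\ref{thm:CC tri'd cat, CH =00003D CC^- or CC^+ then stable CH is quasi-abelian} gives the conclusion; but since the corollary is already stated, I would simply cite it. (Note that $\Hom$-finiteness, the Krull--Schmidt property, and Serre duality are not needed for quasi-abelianness itself---they are only used to guarantee, via Lemma~\ref{lem:Nakaoka twin cotorsion pairs Example 2.10 (2) twin cot pair giving cluster cat mod X_R}, that the relevant cotorsion pairs exist---so the proof is genuinely short.)

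There is really no obstacle to speak of here: the substantive work was done in establishing Theorem~\ref{thm:CC tri'd cat, CH =00003D CC^- or CC^+ then stable CH is quasi-abelian} and in verifying in Lemma~\ref{lem:Nakaoka twin cotorsion pairs Example 2.10 (2) twin cot pair giving cluster cat mod X_R} that we are in the situation $\CT=\CU$. The present theorem is the payoff, recording the quasi-abelian structure of $\CC/[\CX_{R}]$ in the cluster-category-type setting of \cite{BuanMarsh-BM2}. If anything, the one point worth spelling out for the reader is why $\CW=\CX_{R}$, so that $[\CW]=[\CX_{R}]$ and the heart of the twin cotorsion pair literally coincides with $\CC/[\CX_{R}]$; this is immediate from $\CW=\CT\cap\CU=\CX_{R}\cap\CX_{R}=\CX_{R}$.
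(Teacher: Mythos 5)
Your proposal is correct and is essentially the paper's own proof: the paper likewise takes the twin cotorsion pair $((\add\sus R,\CX_{R}),(\CX_R,\CX_R^{\perp_1}))$ from Lemma \ref{lem:Nakaoka twin cotorsion pairs Example 2.10 (2) twin cot pair giving cluster cat mod X_R}, notes $\CT=\CU$, and applies Corollary \ref{cor:CC tri'd, twin cotorsion pair, if CU in CT or CT in CU then stable heart is both integral and quasi-abelian}. Your side remarks (that $\CW=\CX_R$ so the heart is literally $\CC/[\CX_R]$, and that the standing hypotheses on $\CC$ are only needed to establish the twin cotorsion pair) are accurate.
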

\begin{proof}
Consider the twin cotorsion pair $((\CS,\CT),(\CU,\CV))=((\add\sus R,\CX_{R}),(\XR,\tensor[]{\XR}{^{\perp_1}}))$.
As $\CT=\CU$ in this case, by Corollary \ref{cor:CC tri'd, twin cotorsion pair, if CU in CT or CT in CU then stable heart is both integral and quasi-abelian},
$\overline{\CH}=\CC/[\CX_{R}]$ is quasi-abelian.
\end{proof}

Let $\CR$ be the class of regular morphisms in $\CC/[\CX_{R}]$,
and denote by $\CC(R)$ the subcategory $(\add R)*(\add\sus R)$ considered
in \cite[\S5.1]{KellerReiten-ct-algebras-are-gorenstein-stably-cy};
see also \cite[Prop. 6.2]{IyamaYoshino-mutation-in-tri-cats-rigid-CM-mods},
\cite{BuanMarsh-BM1} and \cite{BuanMarsh-BM2}.
An equivalence between $\CC(R)/[\add\sus R]$ and $(\CC/[\CX_{R}])_{\CR}$
exists by combining \cite[Prop. 6.2]{IyamaYoshino-mutation-in-tri-cats-rigid-CM-mods}
with \cite[Thm. 5.7]{BuanMarsh-BM2} (or results of \cite{Beligiannis-rigid-objects-triangulated-subfactors-and-abelian-localizations}
as discussed in Remark \ref{rem:comparison to Bel13}) as follows
\[
\CC(R)/[\add\sus R]\overset{\simeq}{\longrightarrow}\mod\Lambda_{R}\overset{\simeq}{\longleftarrow}(\CC/[\CX_{R}])_{\CR},
\]
where $\Lambda_{R}\deff(\End_{\CC}R)^{\op}$. We now give a new proof
that $\CC(R)/[\add\sus R]$ and $(\CC/[\CX_{R}])_{\CR}$ are equivalent,
which avoids going via the module category $\mod\Lambda_{R}$ altogether. 
\begin{thm}
Let $\CC$ be a $\Hom$-finite, Krull-Schmidt, triangulated $k$-category,
and assume $R$ is a rigid object of $\CC$. Let $\CR$ be the class of regular morphisms in $\CC/[\CX_{R}]$ and let $L_{\CR}\colon\CC/[\CX_{R}]\to(\CC/[\CX_{R}])_{\CR}$
be the localisation functor. Then there is an additive functor \emph{$F\colon\CC(R)/[\add\sus R]\to\CC/[\CX_{R}]$}
such that the composition\emph{
\[
L_{\CR}\circ F\colon\CC(R)/[\add\sus R]\overset{\simeq}{\longrightarrow}(\CC/[\CX_{R}])_{\CR}
\]
}is an equivalence.\end{thm}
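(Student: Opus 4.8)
The plan is to recognise the statement as the instance of Theorem~\ref{thm:CT=00003DCU implies G-Z localisation of CH at regulars is equivalent to heart of cotorsion pair (CS,CT)} attached to the twin cotorsion pair of Lemma~\ref{lem:Nakaoka twin cotorsion pairs Example 2.10 (2) twin cot pair giving cluster cat mod X_R}. Put $\CS\deff\add\sus R$ and $\CT\deff\CX_{R}$. By that lemma, $((\CS,\CT),(\CX_{R},\CX_{R}^{\perp_{1}}))$ is a twin cotorsion pair on $\CC$ with $\CT=\CU=\CW=\CX_{R}$, $\CH=\CC$ and heart $\overline{\CH}=\CC/[\CX_{R}]$; in particular the hypothesis $\CT=\CU$ of Theorem~\ref{thm:CT=00003DCU implies G-Z localisation of CH at regulars is equivalent to heart of cotorsion pair (CS,CT)} holds, and $\CC$ is Krull-Schmidt and triangulated as required there.

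First I would identify the heart of the cotorsion pair $(\CS,\CT)=(\add\sus R,\CX_{R})$ with $\CC(R)/[\add\sus R]$. Rigidity of $R$ gives $\Hom_{\CC}(R,\sus R)=\Ext^{1}_{\CC}(R,R)=0$, so $\add\sus R\subseteq\CX_{R}$; hence in Definition~\ref{def:heart of individual cotorsion pair}, applied to $(\add\sus R,\CX_{R})$, the corner $\mathscr{W}$ is $\add\sus R$, so $\mathscr{C}^{+}=\mathscr{W}*\sus\CX_{R}=(\add\sus R)*\sus\CX_{R}=\CC$ by the cotorsion-pair axiom, while $\mathscr{C}^{-}=\sus^{-1}(\add\sus R)*(\add\sus R)=(\add R)*(\add\sus R)=\CC(R)$ (using that $\sus$ is an autoequivalence together with Definition~\ref{def:CU*CV for subcats CU,CV of C}). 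Therefore $\overline{\mathscr{H}}_{(\CS,\CT)}=(\mathscr{C}^{-}\cap\mathscr{C}^{+})/[\mathscr{W}]=\CC(R)/[\add\sus R]$.

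Next I would take $F$ to be the canonical functor of Proposition~\ref{prop:canonical functor from H (heart of (CS,CT)) to heart CH of twin cotorsion pair}, which is available precisely because $\CS=\add\sus R\subseteq\CX_{R}=\CU$ so that $[\CS]\subseteq[\CW]$. Under the identifications of the previous paragraph, $F$ becomes an additive functor $F\colon\CC(R)/[\add\sus R]\to\CC/[\CX_{R}]$ which is the identity on objects and sends $f+[\add\sus R](X,Y)$ to $f+[\CX_{R}](X,Y)$, exactly the functor demanded by the statement. Applying Theorem~\ref{thm:CT=00003DCU implies G-Z localisation of CH at regulars is equivalent to heart of cotorsion pair (CS,CT)} then yields that $L_{\CR}\circ F\colon\CC(R)/[\add\sus R]\to(\CC/[\CX_{R}])_{\CR}$ is an equivalence, which is the assertion.

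I do not anticipate a genuine obstacle: all of the substantive content — density of $L_{\CR}\circ F$ (via Lemma~\ref{lem:every object of CC is isomorphic in localised CH to an object lying in sus-1CS * CS}), faithfulness (via Lemma~\ref{lem:X in CS=00005B-1=00005D*CS, then f X to Y factors through CW implies f factors through CS}), and fullness (the octahedral/(TR4) argument) — is already packaged inside Theorem~\ref{thm:CT=00003DCU implies G-Z localisation of CH at regulars is equivalent to heart of cotorsion pair (CS,CT)} and the supporting results of \S\ref{sec:Localisation of integral heart}. The only points needing care are the bookkeeping identification $\overline{\mathscr{H}}_{(\add\sus R,\CX_{R})}=\CC(R)/[\add\sus R]$ and the verification that the $F$ furnished by Proposition~\ref{prop:canonical functor from H (heart of (CS,CT)) to heart CH of twin cotorsion pair} has the precise description in the statement; the Serre-duality hypothesis standing in this section enters only through Lemma~\ref{lem:Nakaoka twin cotorsion pairs Example 2.10 (2) twin cot pair giving cluster cat mod X_R}, which produces the second cotorsion pair $(\CX_{R},\CX_{R}^{\perp_{1}})$ and hence the twin cotorsion pair.
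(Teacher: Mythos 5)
Your proposal is correct and follows essentially the same route as the paper: identify the twin cotorsion pair from Lemma~\ref{lem:Nakaoka twin cotorsion pairs Example 2.10 (2) twin cot pair giving cluster cat mod X_R}, verify that the heart of $(\add\sus R,\CX_{R})$ is $\CC(R)/[\add\sus R]$, and then invoke Proposition~\ref{prop:canonical functor from H (heart of (CS,CT)) to heart CH of twin cotorsion pair} and Theorem~\ref{thm:CT=00003DCU implies G-Z localisation of CH at regulars is equivalent to heart of cotorsion pair (CS,CT)}. You spell out the bookkeeping for $\mathscr{W}$, $\mathscr{C}^{-}$, $\mathscr{C}^{+}$ a little more explicitly than the paper does, but there is no difference in substance.
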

\begin{proof}
Let $((\CS,\CT),(\CU,\CV))=((\add\sus R,\CX_{R}),(\XR,\tensor[]{\XR}{^{\perp_1}}))$.
The heart (see Definition \ref{def:heart of individual cotorsion pair})
of the cotorsion pair $(\CS,\CT)=(\add\sus R,\CX_{R})$ is $\overline{\mathscr{H}}_{(\CS,\CT)}=\CC(R)/[\add\sus R]$, 
and the heart of the twin cotorsion pair $((\CS,\CT),(\CU,\CV))$
is $\overline{\CH}=\CC/[\CX_{R}]$. By Proposition \ref{prop:canonical functor from H (heart of (CS,CT)) to heart CH of twin cotorsion pair}
there is an additive functor $F\colon\CC(R)/[\add\sus R]\to\CC/[\CX_{R}]$
that is the identity on objects and maps a morphism $f+[\add\sus R](X,Y)$
to $f+[\CX_{R}](X,Y)$, which is well-defined as $\add\sus R\subseteq\CX_{R}$
since $R$ is a rigid. Then an application of Theorem \ref{thm:CT=00003DCU implies G-Z localisation of CH at regulars is equivalent to heart of cotorsion pair (CS,CT)}
yields an equivalence $$\CC(R)/[\add\sus R]=\overline{\mathscr{H}}_{(\CS,\CT)}\overset{\simeq}{\longrightarrow}\overline{\CH}_{\CR}=(\CC/[\CX_{R}])_{\CR}.$$
\end{proof}
We make two last observations before giving an example to demonstrate
this theory.
\begin{defn}
\label{def:n CY}\cite[\S2.6]{Keller-cy-tri-cats} For $n\in\BN,$
we say that a $\Hom$-finite, triangulated $k$-category $\CC$ is 
\emph{$n$-Calabi-Yau} if $\CC$ admits a Serre functor $\nu$ such
that there is a natural isomorphism $\nu\iso\sus^{n}$ as $k$-linear
triangle functors.\end{defn}
\begin{prop}
\label{prop:CC tri'd, 2 CY, CU=00003DCT iff CS=00003DCV for twin cotorsion pair}Let
$\CC$ be a $\Hom$-finite, triangulated $k$-category which is 2-Calabi-Yau.
Suppose $(\CS,\CT)$ and $(\CU,\CV)$ are cotorsion pairs on $\CC$.
Then $\CT=\CU$ if and only if $\CS=\CV$.\end{prop}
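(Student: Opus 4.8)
The plan is to reduce the statement to the single fact that, in a $2$-Calabi-Yau category, vanishing of $\Ext^{1}$ is symmetric in its two arguments, and then to read off both implications from the characterisation of the members of a cotorsion pair as mutual $\perp_{1}$-orthogonals (Proposition \ref{prop:elementary properties for cotorsion pair}(i)).

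First I would record the following preliminary observation. For any $X,Y\in\CC$ one has $\Ext_{\CC}^{1}(X,Y)=\Hom_{\CC}(X,\sus Y)$, and since $\CC$ is $2$-Calabi-Yau there is a natural isomorphism $\nu\iso\sus^{2}$, so Serre duality gives
\begin{align*}
\Ext_{\CC}^{1}(X,Y)=\Hom_{\CC}(X,\sus Y) &\iso D\Hom_{\CC}(\sus Y,\sus^{2}X)\\
&\iso D\Hom_{\CC}(Y,\sus X)=D\Ext_{\CC}^{1}(Y,X),
\end{align*}
where the second isomorphism uses that $\sus$ is an autoequivalence. As $DW=0$ exactly when $W=0$ (and here all these spaces are finite-dimensional, $\CC$ being $\Hom$-finite), this shows $\Ext_{\CC}^{1}(X,Y)=0$ if and only if $\Ext_{\CC}^{1}(Y,X)=0$. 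Taking this over all objects of a full, additive subcategory $\CX\subseteq\CC$ closed under isomorphisms and direct summands then yields the equality ${}^{\perp_{1}}\CX=\CX^{\perp_{1}}$ of (object classes of) subcategories of $\CC$.

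Granting this, the two implications are immediate and essentially identical. If $\CT=\CU$, then Proposition \ref{prop:elementary properties for cotorsion pair}(i) applied to $(\CS,\CT)$ and to $(\CU,\CV)$ gives $\CS={}^{\perp_{1}}\CT$ and $\CV=\CU^{\perp_{1}}=\CT^{\perp_{1}}$, while the observation with $\CX=\CT$ gives ${}^{\perp_{1}}\CT=\CT^{\perp_{1}}$; chaining these equalities yields $\CS=\CV$. Conversely, if $\CS=\CV$, then the same proposition gives $\CT=\CS^{\perp_{1}}$ and $\CU={}^{\perp_{1}}\CV={}^{\perp_{1}}\CS$, while the observation with $\CX=\CS$ gives $\CS^{\perp_{1}}={}^{\perp_{1}}\CS$; hence $\CT=\CU$.

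I do not anticipate any genuine obstacle here: the only place the full hypotheses are used is the symmetry of $\Ext^{1}$-vanishing, which is exactly where $2$-Calabi-Yauness (as opposed to the mere existence of a Serre functor) and $\Hom$-finiteness enter, and once that is in place the rest is a two-line manipulation of the orthogonality identities. The only thing I would be careful about is stating that symmetry cleanly as a preliminary step, so that the Serre-duality computation is not repeated in each direction.
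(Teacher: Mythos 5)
Your proposal is correct and follows essentially the same route as the paper, which proves the forward direction via the chain $\CS={}^{\perp_{1}}\CT={}^{\perp_{1}}\CU=\CU^{\perp_{1}}=\CV$ using Proposition \ref{prop:elementary properties for cotorsion pair} together with the $2$-Calabi-Yau property, and notes the converse is similar. Your explicit Serre-duality computation simply spells out the symmetry of $\Ext^{1}$-vanishing that the paper leaves implicit in the phrase ``and that $\CC$ is 2-Calabi-Yau''.
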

\begin{proof}
Assume $\CT=\CU$. Then we have the following chain of equalities
$$\CS=\tensor[^{\perp_{1}}]{\CT}{}=\tensor[^{\perp_{1}}]{\CU}{}=\CU^{\perp_{1}}=\CV,$$using
Proposition \ref{prop:elementary properties for cotorsion pair} and
that $\CC$ is 2-Calabi-Yau. The converse is proved similarly.\end{proof}
\begin{cor}
\label{cor:C 2CY implies add susR equals right Ext-perp of X_R}Let
$\CC$ be a $\Hom$-finite, Krull-Schmidt, 2-Calabi-Yau, triangulated
$k$-category, and assume $R$ is a rigid object of $\CC$. Then the
subcategory \emph{$\add\sus R$} coincides with \emph{$\tensor[]{\XR}{^{\perp_1}}$}.\end{cor}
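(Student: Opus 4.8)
The plan is to obtain this as an immediate corollary of Lemma~\ref{lem:Nakaoka twin cotorsion pairs Example 2.10 (2) twin cot pair giving cluster cat mod X_R} and Proposition~\ref{prop:CC tri'd, 2 CY, CU=00003DCT iff CS=00003DCV for twin cotorsion pair}. Since $\CC$ is $2$-Calabi-Yau it is, by hypothesis, $\Hom$-finite and Krull-Schmidt, and it admits a Serre functor (given by $\sus^{2}$), so it has Serre duality; hence Lemma~\ref{lem:Nakaoka twin cotorsion pairs Example 2.10 (2) twin cot pair giving cluster cat mod X_R} applies, and as recorded in its proof both $(\add\sus R,\CX_{R})$ and $(\CX_{R},\tensor[]{\CX_{R}}{^{\perp_{1}}})$ are cotorsion pairs on $\CC$.

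First I would put $(\CS,\CT)\deff(\add\sus R,\CX_{R})$ and $(\CU,\CV)\deff(\CX_{R},\tensor[]{\CX_{R}}{^{\perp_{1}}})$, two cotorsion pairs on $\CC$ satisfying $\CT=\CX_{R}=\CU$. Then, since $\CC$ is also $2$-Calabi-Yau, Proposition~\ref{prop:CC tri'd, 2 CY, CU=00003DCT iff CS=00003DCV for twin cotorsion pair} yields $\CS=\CV$; that is, $\add\sus R=\tensor[]{\CX_{R}}{^{\perp_{1}}}$, which is exactly the assertion.

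Alternatively, one can just rerun the short computation from the proof of Proposition~\ref{prop:CC tri'd, 2 CY, CU=00003DCT iff CS=00003DCV for twin cotorsion pair} in this special case: by Proposition~\ref{prop:elementary properties for cotorsion pair}(i) applied to the cotorsion pair $(\add\sus R,\CX_{R})$ one has $\add\sus R=\tensor[^{\perp_{1}}]{\CX_{R}}{}$, and Serre duality for the $2$-Calabi-Yau category $\CC$ gives $\tensor[^{\perp_{1}}]{\CX_{R}}{}=\tensor[]{\CX_{R}}{^{\perp_{1}}}$, whence the equality. Either way there is essentially no obstacle here: the substantive work is carried entirely by the two cited results, and the only thing requiring care is keeping track of which cotorsion pair plays the role of $(\CS,\CT)$ and which that of $(\CU,\CV)$ when invoking Proposition~\ref{prop:CC tri'd, 2 CY, CU=00003DCT iff CS=00003DCV for twin cotorsion pair}.
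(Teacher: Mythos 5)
Your proposal is correct and follows exactly the paper's own route: the pair $((\add\sus R,\CX_{R}),(\CX_{R},\tensor[]{\CX_{R}}{^{\perp_{1}}}))$ from Lemma \ref{lem:Nakaoka twin cotorsion pairs Example 2.10 (2) twin cot pair giving cluster cat mod X_R} has $\CT=\CU$, so Proposition \ref{prop:CC tri'd, 2 CY, CU=00003DCT iff CS=00003DCV for twin cotorsion pair} gives $\add\sus R=\CS=\CV=\tensor[]{\CX_{R}}{^{\perp_{1}}}$. Your alternative direct computation via $\tensor[^{\perp_{1}}]{\CX_{R}}{}=\tensor[]{\CX_{R}}{^{\perp_{1}}}$ in the 2-Calabi-Yau setting is also sound, but it is just the proof of that proposition unwound, so there is nothing essentially different here.
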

\begin{proof}
Since $((\CS,\CT),(\CU,\CV))=((\add\sus R,\CX_{R}),(\XR,\tensor[]{\XR}{^{\perp_1}}))$
is a pair of cotorsion pairs on $\CC$ with $\CT=\CU$, we must have $\add\sus R=\CS=\CV=\tensor[]{\XR}{^{\perp_{1}}}$
by Proposition \ref{prop:CC tri'd, 2 CY, CU=00003DCT iff CS=00003DCV for twin cotorsion pair}.
\end{proof}

It can also be shown that Corollary \ref{cor:C 2CY implies add susR equals right Ext-perp of X_R}
follows from \cite[Lem. 2.2]{BuanMarsh-BM1} using $T=\sus R$
and the 2-Calabi-Yau property.
\begin{example}
\label{exa:C of A_4 example}Consider the cluster category $\CC\deff\CC_{Q}$
associated to the linearly oriented Dynkin quiver 
\[
Q:\quad1\to2\to3\to4.
\]
Its Auslander-Reiten quiver, with the mesh relations omitted, is $$\makebox[\textwidth][c]{\includegraphics[width=0.8\textwidth]{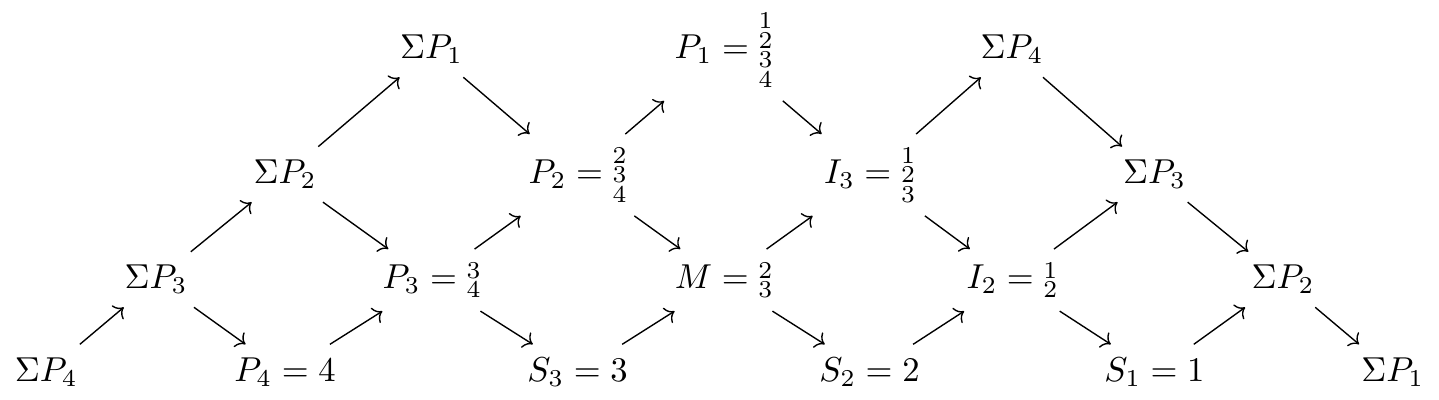}}$$where
the lefthand copy of $\sus P_{i}$ is identified with the corresponding righthand
copy (for $i=1,2,3,4$) (see, for example, \cite[\S3.1]{Schiffler-quiver-reps-book}). We set $R\deff P_{1}\oplus P_{2}\oplus S_{2}$,
which is a basic, rigid object of $\CC$. Note that since $R$ has
just $3$ non-isomorphic indecomposable direct summands, it is not
maximal rigid (see \cite[Cor. 2.3]{BuanMarshReiten-cluster-tilted-algebras}) and hence not cluster-tilting. Denote by $\Lambda_{R}$
the ring $(\End_{\CC}R)^{\op}$. We describe the twin cotorsion pair
$((\CS,\CT),(\CU,\CV))=((\add\sus R,\CX_{R}),(\XR,\tensor[]{\XR}{^{\perp_1}}))$
pictorially below, where ``$\circ$'' denotes that the corresponding
object does not belong to the subcategory. Since the cluster category
is 2-Calabi-Yau (see \cite{BMRRT-cluster-combinatorics}),
that $\CS$ coincides with $\CV$ below is not unexpected (see Corollary \ref{cor:C 2CY implies add susR equals right Ext-perp of X_R}).

\begin{gather*}
\CT = \CU= \CW=\CX_R \\ 
\includegraphics[width=0.8\textwidth]{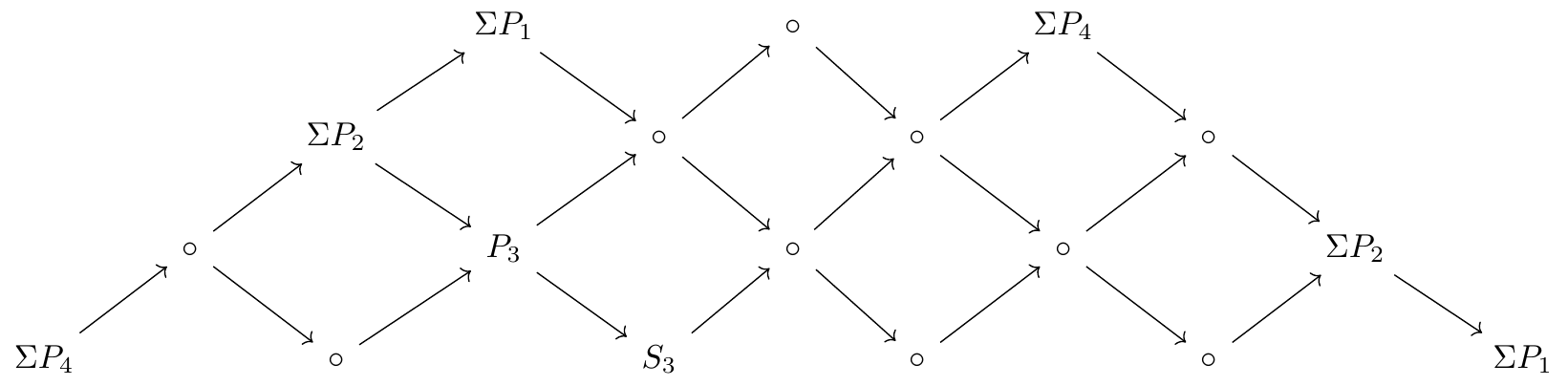}
\end{gather*}

\begin{gather*}
\add\sus R =\CS = \CV=\tensor[]{\XR}{^{\perp_1}} \\ 
\includegraphics[width=0.8\textwidth]{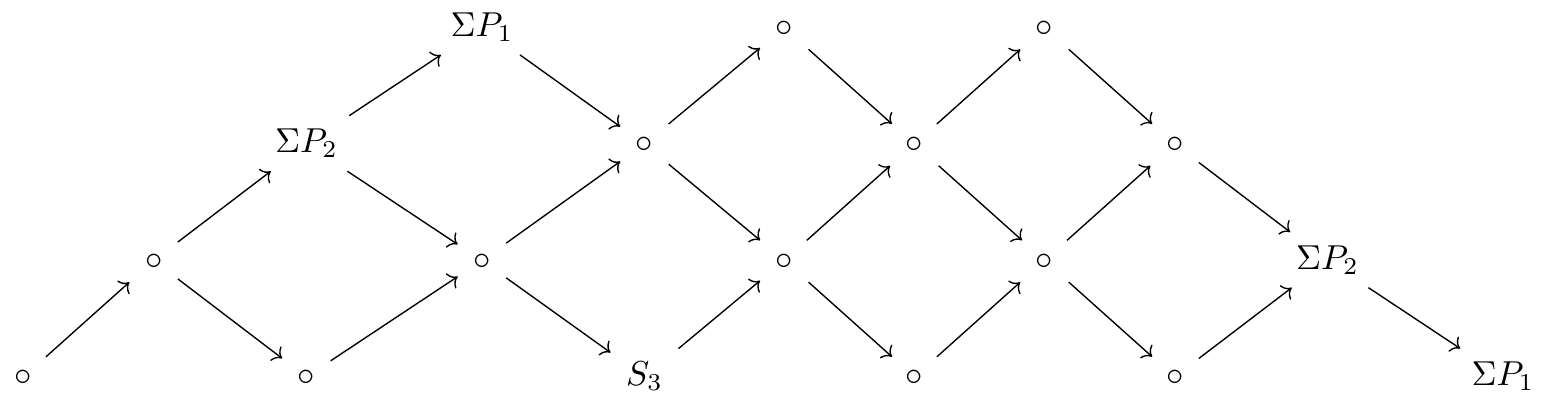}
\end{gather*}

By \cite[Prop. 2.9]{LiuS-AR-theory-in-KS-cat}, the quasi-abelian heart $\overline{\CH}=\CH/[\CW]=\CC/[\CX_{R}]$
for this twin cotorsion pair then has the following Auslander-Reiten
quiver (ignoring the objects denoted by a ``$\circ$'' that lie
in $\CX_{R}$ and again with the mesh relations omitted). 
$$\makebox[0.8\textwidth][l]{\includegraphics[width=0.8\textwidth]{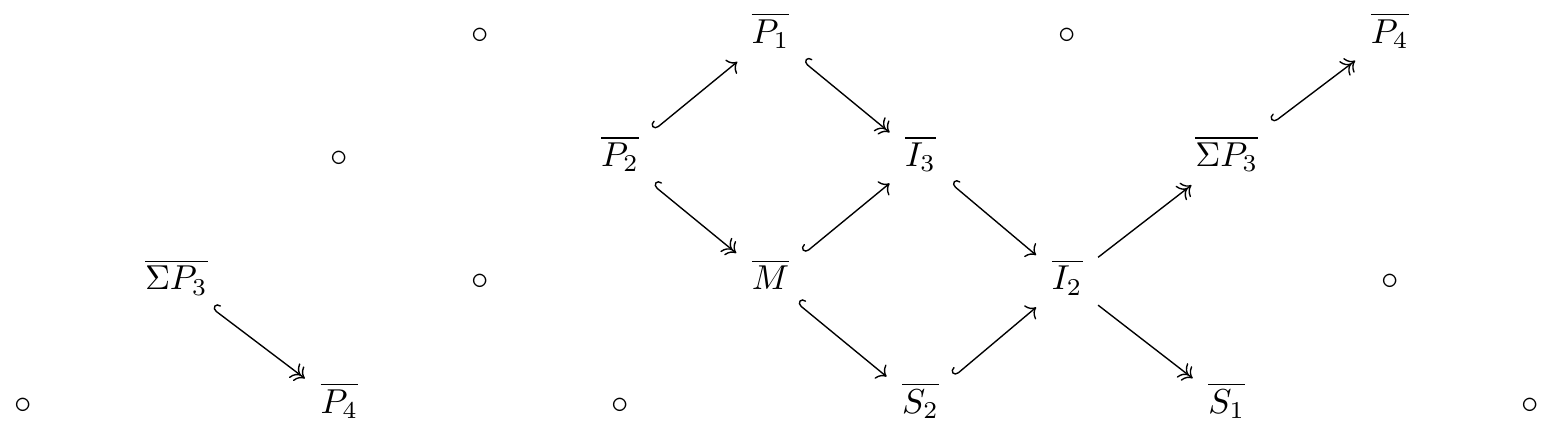}} $$where
one may define the Auslander-Reiten quiver for a Krull-Schmidt category
as in \cite{LiuS-AR-theory-in-KS-cat}. We have denoted
by $\overline{X}$ the image of the object $X$ of $\CC$ in $\CC/[\CX_{R}]$,
monomorphisms by ``$\hookrightarrow$'' and epimorphisms by ``$\twoheadrightarrow$''.
The extra righthand copy of $\overline{P_{4}}$ is included to illustrate
that this quiver really is connected and similar in shape to the Auslander-Reiten
quiver of $\mod\Lambda_{R}$ (see below). In this example there are
precisely three irreducible morphisms between indecomposables that are regular morphisms,
namely the morphisms $\overline{P_{1}}\to\overline{I_{3}}$, $\overline{P_{2}}\to\overline{M}$
and $\overline{\sus P_{3}}\to\overline{P_{4}}$. As noted in \S\ref{sec:Introduction},
one may show that various aspects of Auslander-Reiten theory are still
applicable in quasi-abelian categories. We refer the reader to \cite{Shah-AR-theory-quasi-abelian-cats-KS-cats}
for more details. However, one noticeable difference is that in a
quasi-abelian category there exist irreducible morphisms that are
regular. On the other hand, in an abelian category an irreducible
morphism cannot be regular, since a morphism is regular if and only
if it is an isomorphism in such a category, and irreducible morphisms
cannot be isomorphisms by definition.

In addition, one may obtain the Auslander-Reiten quiver of $\mod\Lambda_{R}$
by localising $\overline{\CH}$ at the regular morphisms as shown
in \cite{BuanMarsh-BM2}. In this case, one obtains the Auslander-Reiten 
quiver $$\makebox[0.3\textwidth][l]{\includegraphics[width=0.3\textwidth]{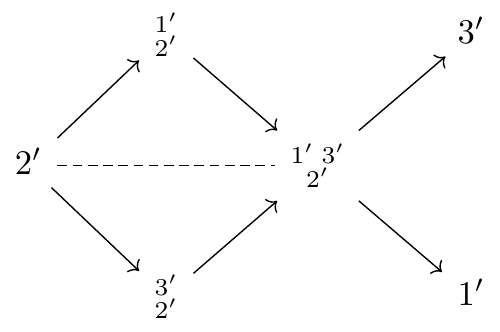}} $$where
$\Lambda_{R}$ is isomorphic to the path algebra of the quiver $1'\to 2'\leftarrow 3'$.
\end{example}
\begin{acknowledgements}
The author would like to thank Robert J. Marsh for his helpful guidance and support during the preparation of this article. The author is also grateful for financial support from the University of Leeds through a University of Leeds 110 Anniversary Research Scholarship. The author also thanks the referee for comments on an earlier version of the paper.
\end{acknowledgements}

\bibliography{mybib2}
\bibliographystyle{mybst}

\end{document}